\newtheorem*{cor}{Corollary}%[section]
\newtheorem*{lem}{Lemma}
\newtheorem*{prop}{Proposition}
\theoremstyle{definition}
\theoremstyle{definition}
\newtheorem{thm}{Theorem}
\newtheorem*{thm*}{Theorem}
\newtheorem*{conj}{Conjecture}
\newtheorem*{rem}{Remark}
\newcounter{cnt}
\newenvironment{enumerit}{\begin{list}{{\hfill\rm(\roman{cnt})\hfill}}{%
\settowidth{\labelwidth}{{\rm(iv)}}\leftmargin=\labelwidth%
\advance\leftmargin by \labelsep\rightmargin=0pt\usecounter{cnt}}}{\end{list}} \makeatletter
\def\mydggeometry{\makeatletter\dg@YGRID=1\dg@XGRID=20\unitlength=0.003pt\makeatother}
\makeatother \theoremstyle{remark}
\numberwithin{equation}{section}
\let\bwdg\bigwedge
\def\bigwedge{{\textstyle\bwdg}}
\begin{document}
\newcommand{\semi}{{\,\rule[.1pt]{.4pt}{5.3pt}\hskip-1.9pt\times}}
\newcommand{\semc}{{\rm sc}}
\newcommand{\Lhg}{\widehat{\mathfrak{g}}}
\newcommand{\Lhh}{\widehat{\mathfrak{h}}}
\newcommand{\Lhb}{\widehat{\mathfrak{b}}}
\newcommand{\wDelta}{\widehat{{\Delta}}}
\newcommand{\wPhi}{\widehat{{\Phi}}}
\newcommand{\ph}{\widehat{P}}
\newcommand{\charc}{\hbox{\rm Char}\,}
\newcommand{\waff}{{W^{\rm aff}}}
\newcommand{\wiff}{{\widetilde{W}^{\rm aff}}}
\newcommand{\thmref}[1]{Theorem~\ref{#1}}
\newcommand{\secref}[1]{Section~\ref{#1}}
\newcommand{\lemref}[1]{Lemma~\ref{#1}}
\newcommand{\propref}[1]{Proposition~\ref{#1}}
\newcommand{\corref}[1]{Corollary~\ref{#1}}
\newcommand{\remref}[1]{Remark~\ref{#1}}
\newcommand{\defref}[1]{Definition~\ref{#1}}
\newcommand{\er}[1]{(\ref{#1})}
\newcommand{\id}{\operatorname{id}}
\newcommand{\ord}{\operatorname{\emph{ord}}}
\newcommand{\sgn}{\operatorname{sgn}}
\newcommand{\wt}{\operatorname{wt}}
\newcommand{\tensor}{\otimes}
\newcommand{\from}{\leftarrow}
\newcommand{\nc}{\newcommand}
\newcommand{\rnc}{\renewcommand}
\newcommand{\dist}{\operatorname{dist}}
\newcommand{\qbinom}[2]{\genfrac[]{0pt}0{#1}{#2}}
\nc{\cal}{\mathcal} \nc{\goth}{\mathfrak} \rnc{\bold}{\mathbf}
\renewcommand{\frak}{\mathfrak}
\newcommand{\supp}{\operatorname{supp}}
\newcommand{\Irr}{\operatorname{Irr}}
\newcommand{\psym}{\mathcal{P}^+_{K,n}}
\newcommand{\psyml}{\mathcal{P}^+_{K,\lambda}}
\newcommand{\psymt}{\mathcal{P}^+_{2,\lambda}}
\renewcommand{\Bbb}{\mathbb}
\nc\bomega{{\mbox{\boldmath $\omega$}}} \nc\bpsi{{\mbox{\boldmath $\Psi$}}}
 \nc\balpha{{\mbox{\boldmath $\alpha$}}}
 \nc\bpi{{\mbox{\boldmath $\pi$}}}
  \nc\bxi{{\mbox{\boldmath $\xi$}}}
\nc\bmu{{\mbox{\boldmath $\mu$}}} \nc\bcN{{\mbox{\boldmath $\cal{N}$}}} \nc\bcm{{\mbox{\boldmath $\cal{M}$}}} \nc\blambda{{\mbox{\boldmath
$\lambda$}}}\nc\bnu{{\mbox{\boldmath $\nu$}}}

\newcommand{\Tmn}{\bold{T}_{\lambda^1, \lambda^2}^{\nu}}

\newcommand{\lie}[1]{\mathfrak{#1}}
\newcommand{\ol}[1]{\overline{#1}}
\makeatletter
\def\section{\def\@secnumfont{\mdseries}\@startsection{section}{1}%
  \z@{.7\linespacing\@plus\linespacing}{.5\linespacing}%
  {\normalfont\scshape\centering}}
\def\subsection{\def\@secnumfont{\bfseries}\@startsection{subsection}{2}%
  {\parindent}{.5\linespacing\@plus.7\linespacing}{-.5em}%
  {\normalfont\bfseries}}
\makeatother
\def\subl#1{\subsection{}\label{#1}}
 \nc{\Hom}{\operatorname{Hom}}
  \nc{\mode}{\operatorname{mod}}
\nc{\End}{\operatorname{End}} \nc{\wh}[1]{\widehat{#1}} \nc{\Ext}{\operatorname{Ext}} \nc{\ch}{\text{ch}} \nc{\ev}{\operatorname{ev}}
\nc{\Ob}{\operatorname{Ob}} \nc{\soc}{\operatorname{soc}} \nc{\rad}{\operatorname{rad}} \nc{\head}{\operatorname{head}}
\def\Im{\operatorname{Im}}
\def\gr{\operatorname{gr}}
\def\mult{\operatorname{mult}}
\def\Max{\operatorname{Max}}
\def\ann{\operatorname{Ann}}
\def\sym{\operatorname{sym}}
\def\loc{\operatorname{loc}}
\def\Res{\operatorname{\br^\lambda_A}}
\def\und{\underline}
\def\Lietg{$A_k(\lie{g})(\bsigma,r)$}

 \nc{\Cal}{\cal} \nc{\Xp}[1]{X^+(#1)} \nc{\Xm}[1]{X^-(#1)}
\nc{\on}{\operatorname} \nc{\Z}{{\bold Z}} \nc{\J}{{\cal J}} \nc{\C}{{\bold C}} \nc{\Q}{{\bold Q}}
\renewcommand{\P}{{\cal P}}
\nc{\N}{{\Bbb N}} \nc\boa{\bold a} \nc\bob{\bold b} \nc\boc{\bold c} \nc\bod{\bold d} \nc\boe{\bold e} \nc\bof{\bold f} \nc\bog{\bold g}
\nc\boh{\bold h} \nc\boi{\bold i} \nc\boj{\bold j} \nc\bok{\bold k} \nc\bol{\bold l} \nc\bom{\bold m} \nc\bon{\bold n} \nc\boo{\bold o}
\nc\bop{\bold p} \nc\boq{\bold q} \nc\bor{\bold r} \nc\bos{\bold s} \nc\boT{\bold t} \nc\boF{\bold F} \nc\bou{\bold u} \nc\bov{\bold v}
\nc\bow{\bold w} \nc\boz{\bold z} \nc\boy{\bold y} \nc\ba{\bold A} \nc\bb{\bold B} \nc\bc{\bold C} \nc\bd{\bold D} \nc\be{\bold E} \nc\bg{\bold
G} \nc\bh{\bold H} \nc\bi{\bold I} \nc\bj{\bold J} \nc\bk{\bold K} \nc\bl{\bold L} \nc\bm{\bold M} \nc\bn{\bold N} \nc\bo{\bold O} \nc\bp{\bold
P} \nc\bq{\bold Q} \nc\br{\bold R} \nc\bs{\bold S} \nc\bt{\bold T} \nc\bu{\bold U} \nc\bv{\bold V} \nc\bw{\bold W} \nc\bz{\bold Z} \nc\bx{\bold
x} \nc\KR{\bold{KR}} \nc\rk{\bold{rk}} \nc\het{\text{ht }}

\nc\toa{\tilde a} \nc\tob{\tilde b} \nc\toc{\tilde c} \nc\tod{\tilde d} \nc\toe{\tilde e} \nc\tof{\tilde f} \nc\tog{\tilde g} \nc\toh{\tilde h}
\nc\toi{\tilde i} \nc\toj{\tilde j} \nc\tok{\tilde k} \nc\tol{\tilde l} \nc\tom{\tilde m} \nc\ton{\tilde n} \nc\too{\tilde o} \nc\toq{\tilde q}
\nc\tor{\tilde r} \nc\tos{\tilde s} \nc\toT{\tilde t} \nc\tou{\tilde u} \nc\tov{\tilde v} \nc\tow{\tilde w} \nc\toz{\tilde z} \nc\woi{w_{\omega_i}}
\nc\chara{\operatorname{Char}}

\title{Fusion Product Structure of Demazure modules}
\author[R. Venkatesh]{R. Venkatesh}
\address{\noindent Department of Mathematics, The Institute of Mathematical Sciences, Chennai 600113, India.}
\email{rvenkat@imsc.res.in}
\begin{abstract} 
%In this paper, we study fusion product structure of $\lie g$--stable Demazure modules for an arbitrary simple Lie algebra $\lie g$. 
Let $\lie g$ be a finite--dimensional complex simple Lie algebra. Given a non--negative integer $\ell$, we define $\mathcal{P^+_\ell}$
to be the set of dominant weights $\lambda$ of $\lie g$ such that $\ell\Lambda_0+\lambda$ is a dominant weight for the corresponding untwisted affine Kac--Moody algebra $\widehat{\lie g}$. 
For the current algebra $\lie g[t]$ associated to $\lie g$, we show that the fusion 
product of an irreducible $\lie g$--module $V(\lambda)$ such that $\lambda\in \mathcal{P^+_\ell}$ and a finite number of special family of $\lie g$--stable Demazure modules of level $\ell$ (considered in \cite{FoL2} and \cite{FoL}) again turns out to be a Demazure module. %In particular, this fusion product is independent of the choice of the parameters that used in the fusion product construction. 
This fact is closely related with several important conjectures. 
We use this result to construct the $\lie g[t]$--module structure of the irreducible $\Lhg$--module $V(\ell \Lambda_0+\lambda)$ as a semi--infinite fusion product of finite dimensional $\lie g[t]$--modules as conjectured in \cite{FoL}.
As a second application we give further evidence to the conjecture on the generalization of Schur positivity (see \cite{chfsa}). 
\end{abstract}
\maketitle

\section*{Introduction} 
The current algebra associated to a simple Lie algebra $\lie g$  is the special maximal parabolic subalgebra of the untwisted affine Lie algebra $\widehat{\lie g}$ associated to $\lie g$. 
As a vector space it is isomorphic to  $\lie g\otimes \mathbb{C}[t]$, where $\mathbb{C}[t]$ is the polynomial ring in the indeterminate $t$ and the Lie bracket is given in the obvious way.
Equivalently, it is just the Lie algebra of polynomial maps  $\mathbb{C}\to \lie g$.
The study of the category of finite dimensional graded representations of the current algebra has been the subject of many articles in the recent years. See for example \cite{eddy}, \cite{BCM}, \cite{chbdz}, \cite{deFK}, \cite{FoL2}, \cite{FoL}, \cite{Naoi}, \cite{Naoi1}. 
One of the original motivations for the study of this category was that it is closely related
to the representation theory of the corresponding quantum affine algebra. The results of \cite{Ckir}
and \cite{CMkir} showed that many interesting families of irreducible representations of the quantum
affine algebra associated to $\lie g$, when specialized to $q = 1$ give indecomposable representations
of the current algebra. For example, these families include the Kirillov--Reshetikhin modules.

There is an important class of finite dimensional graded $\lie g[t]$--modules, called Demazure modules. Let $\lie b$ be a Borel subalgebra of $\lie g$ and $\widehat{\lie b}$ be the corresponding Borel subalgebra of $\Lhg$.
By definition, a Demazure Module is a $\widehat{\lie b}$--submodule of an irreducible highest weight representation of $\widehat{\lie g}$ generated by an extremal weight vector.
We shall only be concerned with $\lie g$--stable Demazure modules in this article. 
These modules naturally become finite dimensional graded modules for the associated current algebra $\lie g[t]$ by restriction. The Demazure module of level $\ell$ and weight $\lambda\in P^+$ for $\lie g[t]$ is denoted by $D(\ell, \lambda)$, where $P^+$ denotes the set of dominant weight of $\lie g$.
The defining relations of these $\lie g[t]$--modules are greatly simplified in \cite{chve} using the results of \cite{mathieu} (see also \cite{FoL}, and \cite{Naoi}). 
We use this simplified presentation to study fusion product structure of $D(\ell, \lambda)$ for some special weights $\lambda\in P^+$.
We assume that $\lie g$ is simply laced for simplicity and state our main result here. We refer the reader to Theorem \ref{mapsdem} for a more general statement.
The primary goal of this paper is the following:
\begin{thm*} Let $\ell, m\in \mathbb{N}$ and $z, z_1, \dots, z_m\in \mathbb{C}$ be given distinct complex numbers. Let $\mu\in P^+$, $\lambda\in \mathcal{P^+_\ell}$ and suppose that there exists $\mu_j\in P^+$, $1\le j\le m$ 
such that $\mu=\mu_1+\cdots+\mu_m$. Then we have an isomorphism of $\lie g[t]$--modules,
$$D(\ell,\ell \mu+\lambda)\cong D^{z_1}(\ell,\ell \mu_1)*\cdots*D^{z_m}(\ell, \ell \mu_m)*\ev_{z}V(\lambda)$$
\end{thm*}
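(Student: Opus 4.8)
The strategy is to build the isomorphism by exhibiting a surjective map of $\lie g[t]$--modules in one direction and then matching dimensions (equivalently, characters) on both sides. First I would recall the defining presentation of $D(\ell,\nu)$ from \cite{chve}: it is the cyclic $\lie g[t]$--module generated by a highest weight vector $v_\nu$ of weight $\nu$ subject to $\lie n^+[t]v_\nu = 0$, $(h\otimes t^s)v_\nu = 0$ for $s\ge 1$, $(h\otimes 1)v_\nu = \nu(h)v_\nu$, and the crucial ``Garland--type'' relations $(x^-_\alpha\otimes t^s)^{k}v_\nu = 0$ whenever $s\ge 0$ and $k > \max\{0, \lceil (\nu(\alpha^\vee) - s\ell)/1\rceil\}$ (in the simply laced case, where $d_\alpha=1$). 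The key point is that on the fusion product side, the generator is the tensor product of the highest weight vectors, of total weight $\ell\mu_1 + \cdots + \ell\mu_m + \lambda = \ell\mu + \lambda$, so there is a natural candidate surjection $D(\ell,\ell\mu+\lambda) \twoheadrightarrow D^{z_1}(\ell,\ell\mu_1)*\cdots*\ev_z V(\lambda)$; this requires checking that the fusion product is generated by this vector and that it satisfies the Demazure relations, which follows because the fusion product filtration degenerates the tensor product and the relevant relations are known to hold in each tensor factor and pass to the associated graded (this is essentially the argument in \cite{FoL2}, \cite{FoL}, \cite{Naoi}).

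The reverse inequality on dimensions is where the real content lies. For this I would use the character/dimension identity: $\dim D^{z_1}(\ell,\ell\mu_1)*\cdots = \prod_j \dim D(\ell,\ell\mu_j)\cdot \dim V(\lambda)$, since a fusion product always has the dimension of the ordinary tensor product. So it suffices to prove
$$\dim D(\ell,\ell\mu+\lambda) = \dim V(\lambda)\cdot\prod_{j=1}^m \dim D(\ell,\ell\mu_j).$$
I expect to establish this by induction on $m$, which reduces everything to the base case $\dim D(\ell,\ell\mu+\lambda) = \dim D(\ell,\ell\mu)\cdot\dim V(\lambda)$ together with $\dim D(\ell,\ell(\mu+\mu')) = \dim D(\ell,\ell\mu)\cdot\dim D(\ell,\ell\mu')$ --- the latter being precisely the statement that the $\lie g$--stable level--$\ell$ Demazure modules of the form $D(\ell,\ell\mu)$ behave multiplicatively, which is one of the Fourier--Littelmann type results (cf.\ \cite{FoL2}, \cite{FoL}) that I am entitled to assume via Theorem~\ref{mapsdem}'s antecedents. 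The dimension statement for $D(\ell, \ell\mu+\lambda)$ can alternatively be attacked representation--theoretically: $V(\ell\Lambda_0 + \ell\mu+\lambda)$ and $V(\ell\Lambda_0+\lambda)$ are integrable $\Lhg$--modules, and $\ell\mu$ is a translation by an element of the lattice $\ell P$ lying in the Tits cone, so the corresponding Demazure modules are related by a translation element of the affine Weyl group; comparing the two Demazure operators gives the product formula.

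The main obstacle, and the step needing the most care, is verifying that the natural surjection is well--defined --- i.e.\ that the generator of the fusion product really does satisfy \emph{all} the defining relations of $D(\ell,\ell\mu+\lambda)$, not just the obvious weight and locality relations. The subtlety is that the exponents in the Garland relations depend non--linearly (via the ceiling function) on the weight, so the relation satisfied by the generator of $D(\ell,\ell\mu+\lambda)$ is in general \emph{stronger} than the ``sum'' of the relations in the tensor factors; one must check that the stronger relation nevertheless holds in the graded tensor product, using the fact that $\lambda \in \mathcal{P}^+_\ell$ forces $\lambda(\alpha^\vee)\le\ell$ for all simple (indeed all positive, suitably interpreted) coroots, so that $\ell\mu(\alpha^\vee) + \lambda(\alpha^\vee)$ and $\sum_j \ell\mu_j(\alpha^\vee) + \lambda(\alpha^\vee)$ produce compatible ceilings. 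Once well--definedness is secured, surjectivity is formal (the target is cyclic on the image of the generator), and the dimension count closes the argument.
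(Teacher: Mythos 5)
Your plan follows the same two--step route as the paper: (i) a surjection $D(\ell,\ell\mu+\lambda)\twoheadrightarrow D^{z_1}(\ell,\ell\mu_1)\ast\cdots\ast D^{z_m}(\ell,\ell\mu_m)\ast \ev_z V(\lambda)$ built from the simplified presentation of Demazure modules of \cite{chve} (Theorem \ref{genreldem1}, with cyclicity of the fusion product coming from Lemma \ref{fusionweyl}), and (ii) a dimension match, using that a fusion product has the dimension of the underlying tensor product together with the identity $\dim D(\ell,\ell\mu+\lambda)=\dim V(\lambda)\prod_j\dim D(\ell,\ell\mu_j)$. For (ii), be aware that the base case $\dim D(\ell,\ell\mu+\lambda)=\dim D(\ell,\ell\mu)\cdot\dim V(\lambda)$ with $\lambda\neq 0$ is \emph{not} in \cite{FoL2} or \cite{FoL} and cannot be ``assumed via antecedents'' without circularity: it is exactly the new content of Proposition \ref{demprop}, proved by your ``alternative'' route, namely the factorization $D_{t_{w_0\mu}w_0}=D_{t_{w_0\mu}}D_{w_0}$ (length additivity, Lemma \ref{length}), the identity $D_{w_0}(e(\ell\Lambda_0+\lambda))=e(\ell\Lambda_0)\operatorname{char}_{\lie h}V(\lambda)$, and Lemma 7 and Theorem 1' of \cite{FoL2}. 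This is also one of the two places where $\lambda\in\mathcal{P}^+_\ell$ is genuinely used: it makes $\ell\Lambda_0+\lambda$ affine dominant so the Demazure character formula applies (the other place is the identification $D(\ell,\lambda)\cong\ev_0V(\lambda)$, which lets you put $V(\lambda)$ as the last fusion factor).

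The genuine gap is the step you yourself flag and leave open: verifying that the fusion generator satisfies the defining relations of $D(\ell,\ell\mu+\lambda)$. Your proposed mechanism (``compatible ceilings'' forced by $\lambda\in\mathcal{P}^+_\ell$) is not what makes this work; indeed the surjection exists for arbitrary $\lambda\in P^+$ with $D(\ell,\lambda)$ as last factor. The actual argument is an explicit computation with a well--chosen polynomial: writing $s_{\alpha,j}=\mu_j(h_\alpha)$, $s_\alpha=\mu(h_\alpha)=\sum_j s_{\alpha,j}$ and $m_\alpha=\lambda(h_\alpha)\le\ell$ (so the only relations to check, by Theorem \ref{genreldem1}, are $(x^-_\alpha\otimes t^{s_\alpha+1})$ and $(x^-_\alpha\otimes t^{s_\alpha})^{m_\alpha+1}$ killing the generator), one applies $x^-_\alpha\otimes p(t)$ with $p(t)=(t-z_1)^{s_{\alpha,1}}\cdots(t-z_m)^{s_{\alpha,m}}(t-z)$ to $v_1\otimes\cdots\otimes v_m\otimes v_\lambda$: every summand vanishes because $(x^-_\alpha\otimes t^b)v_j=0$ for $b\ge s_{\alpha,j}$ in $D(\ell,\ell\mu_j)$ and $x^-_\alpha\otimes(t-z)$ acts trivially on $\ev_zV(\lambda)$; since $p$ is monic of degree $s_\alpha+1$, Lemma \ref{elemfusion} yields $(x^-_\alpha\otimes t^{s_\alpha+1})(v_1\ast\cdots\ast v_\lambda)=0$. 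For the second family one uses the degree--$s_\alpha$ polynomial $(t-z_1)^{s_{\alpha,1}}\cdots(t-z_m)^{s_{\alpha,m}}$ raised to the power $m_\alpha+1$; all terms touching the first $m$ factors die, the expression collapses onto the last factor, and $(x^-_\alpha)^{\lambda(h_\alpha)+1}v_\lambda=0$ finishes it. Without this (or an equivalent) computation the surjection, and hence the isomorphism, is not established; the rest of your outline is sound.
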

This generalizes a theorem of Fourier and Littelmann (see \cite[Theorem C]{FoL}) where they only consider the case $\lambda=0$. Note in particular that the fusion product of an irreducible module and a finite number of Demazure modules of the form $D(\ell,\ell\mu)$, for sufficiently large $\ell$, is independent of the choice of the parameters that used in the fusion
product construction. Since the Kirillov--Reshetikhin modules are Demazure modules (see \cite{CMkir}, \cite{FoL}), the new interpretations of our
main theorem has many interesting applications related to several important conjectures.
We give two important applications in this article.
Let $V(\ell\Lambda_0+\lambda)$ be the irreducible highest weight module of highest weight $\ell\Lambda_0+\lambda$
for the untwisted affine Kac--Moody $\widehat{\lie g}$, where $\Lambda_0$ is the fundamental weight of $\widehat{\lie g}$ corresponding to the extra node of the extended Dykin diagram of $\lie g$. 
As a first application we construct the $\lie g[t]$--module structure of $V(\ell \Lambda_0+\lambda)$ as a semi--infinite fusion product of finite dimensional $\lie g[t]$--modules as conjectured in \cite{FoL}.
More preciously, we prove the following: \begin{thm*} 
Let $\Lambda = \ell \Lambda_0 + \lambda$ be a dominant integral weight for $\Lhg$, then 
$$
V(\Lambda) \ \text{and} \ \underset{N \rightarrow \infty}{\mbox{lim }} D(\ell,\ell\Theta) \ast \cdots \ast D(\ell,\ell\Theta) \ast V(\lambda)
$$
are isomorphic as $\lie g[t]$--modules.
\end{thm*}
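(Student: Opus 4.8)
The plan is to deduce this from the main theorem of the Introduction (the general form being \thmref{mapsdem}) together with the combinatorial description, going back to Fourier--Littelmann, of an integrable highest weight module as an increasing union of $\lie g$--stable Demazure submodules. First I would apply the main theorem with $m=N$ and $\mu_1=\cdots=\mu_N=\Theta$ (so that $\mu=N\Theta\in P^+$), which yields, for every choice of pairwise distinct $z_1,\dots,z_N,z\in\C$, an isomorphism of $\lie g[t]$--modules
$$D(\ell,\ell N\Theta+\lambda)\ \cong\ D^{z_1}(\ell,\ell\Theta)\ast\cdots\ast D^{z_N}(\ell,\ell\Theta)\ast\ev_z V(\lambda).$$
In particular the right--hand side is, up to isomorphism, independent of the parameters, so that the semi--infinite fusion product in the statement is by definition the direct limit of the chain of $\lie g[t]$--modules $\bigl(D(\ell,\ell N\Theta+\lambda)\bigr)_{N\ge 1}$. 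Thus it suffices to exhibit $\lie g[t]$--module inclusions $D(\ell,\ell N\Theta+\lambda)\hookrightarrow D(\ell,\ell(N+1)\Theta+\lambda)$ and to identify the resulting direct limit with $V(\ell\Lambda_0+\lambda)$.

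Next I would realize each $D(\ell,\ell N\Theta+\lambda)$ concretely inside $V(\Lambda)$, $\Lambda=\ell\Lambda_0+\lambda$: by the standard identification of $\lie g$--stable Demazure modules (see \cite{chve}, \cite{FoL}) one has $D(\ell,\ell N\Theta+\lambda)\cong V_{t_{N\Theta^\vee}}(\Lambda)$, the $\Lhb$--submodule of $V(\Lambda)$ generated by the extremal weight vector of weight $t_{N\Theta^\vee}\Lambda$ (note $\Theta^\vee\in Q^\vee$, so $t_{N\Theta^\vee}\in\waff$; in the simply laced case $\Theta^\vee$ and $\Theta$ are identified via the normalized form and the classical part of $t_{N\Theta^\vee}\Lambda$ is exactly $\ell N\Theta+\lambda$). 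Since products of translations by dominant coweights are length--additive, $t_{(N+1)\Theta^\vee}=t_{N\Theta^\vee}\,t_{\Theta^\vee}$ is length--additive, and a concatenation of reduced words then gives $t_{N\Theta^\vee}\le t_{(N+1)\Theta^\vee}$ in the Bruhat order on $\waff$. Hence $V_{t_{N\Theta^\vee}}(\Lambda)\subseteq V_{t_{(N+1)\Theta^\vee}}(\Lambda)$, and these $\Lhb$--module --- a fortiori $\lie g[t]$--module --- inclusions serve as the transition maps (they are graded up to a degree shift, which is irrelevant for the ungraded conclusion sought). The direct limit is therefore identified with the submodule $\bigcup_{N\ge 1}V_{t_{N\Theta^\vee}}(\Lambda)\subseteq V(\Lambda)$.

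It then remains to check that this union is all of $V(\Lambda)$, i.e. that $\bigl(t_{N\Theta^\vee}\bigr)_N$ is cofinal in $(\waff,\le)$. This is exactly the combinatorial input underlying the semi--infinite construction of \cite{FoL} for $\lambda=0$, and it is insensitive to $\lambda$: since $t_{\Theta^\vee}=s_0\,s_\Theta$ and a reduced word for $s_\Theta$ involves every finite simple reflection ($\Theta$ being the highest root), the support of $t_{\Theta^\vee}$ is the full affine Dynkin diagram, and a standard argument then shows $w\le t_{N\Theta^\vee}$ for every $w\in\waff$ and $N\gg 0$. Granting this, each Demazure submodule $V_w(\Lambda)$ lies in some $V_{t_{N\Theta^\vee}}(\Lambda)$; as $V(\Lambda)=\bigcup_{w\in\waff}V_w(\Lambda)$ we obtain $\bigcup_N V_{t_{N\Theta^\vee}}(\Lambda)=V(\Lambda)$, finishing the argument.

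As for where the real work is: essentially everything beyond the first step is bookkeeping, and I do not anticipate a genuine obstacle. The two points that require care are (i) checking that ``semi--infinite fusion product'' is legitimately read as the direct limit of the Demazure chain --- which is forced by the parameter--independence in the displayed isomorphism --- and (ii) quoting correctly the cofinality statement of \cite{FoL}, which must be separated cleanly from the $\lambda$--dependent parts so that it applies verbatim. The substance of the result is entirely contained in \thmref{mapsdem}, specifically in the identification there of an iterated fusion product of Demazure modules (and one evaluation module) with a single Demazure module.
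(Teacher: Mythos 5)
Your overall strategy --- feed $\mu_1=\cdots=\mu_N=\Theta$ into \thmref{mapsdem}, then identify the limit with the exhaustion of $V(\Lambda)$ by $\lie g$--stable affine Demazure submodules --- is the same route the paper takes (following \cite[Theorem 9]{FoL}), and the Bruhat--order inclusions plus cofinality of the translation elements are indeed the standard and correct way to see that the union of those Demazure submodules is all of $V(\Lambda)$. The genuine gap is at the step you label (i). The semi--infinite fusion product in the theorem is a direct limit along \emph{specified} transition maps, $v\mapsto \overline{w\otimes v}$ with $w$ spanning the $\lie g[t]$--invariant line of $D(\ell,\ell\Theta)$, and a direct limit depends on the maps, not just on the terms. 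Parameter--independence, i.e.\ the isomorphisms $D(\ell,N\ell\Theta+\lambda)\cong D(\ell,\ell\Theta)^{\ast N}\ast V(\lambda)$, only matches up the objects of the two directed systems; it does not show that under these isomorphisms the maps $v\mapsto\overline{w\otimes v}$ become the canonical inclusions of Demazure modules inside $V(\Lambda)$, nor even that they are injective (these modules are far from irreducible over $\lie g[t]$, so a nonzero $\lie g[t]$--map need not embed). This compatibility is exactly the content the paper imports from Fourier--Littelmann: one extends $v\mapsto w\otimes v$ to a $U(\lie g[t]\oplus\C K)$--morphism by letting $K$ act by the level, checks it shifts the fusion filtration by a fixed amount so that it induces a nonzero map of the associated graded modules $D(\ell,N\ell\Theta+\lambda)\to D(\ell,(N+1)\ell\Theta+\lambda)$, and then invokes the uniqueness, up to scalar, of a nontrivial $U(\tilde{\lie b})$--module morphism from a Demazure module into $V(\Lambda)$ (so into a larger Demazure module) to conclude that this induced map \emph{is} the canonical Demazure embedding. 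With that ladder of commuting squares in hand, your cofinality argument finishes the proof; without it, you have shown that the limit of the Demazure chain is $V(\Lambda)$, but not that the limit in the statement, taken along $v\mapsto w\otimes v$, is.

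A secondary inaccuracy: the $\lie g$--stable Demazure module $D(\ell,N\ell\Theta+\lambda)$ sits inside $V(\Lambda)$ as $V_{t_{-N\Theta}w_0}(\Lambda)$ (the extremal weight must have \emph{antidominant} classical component, $w\Lambda\in-P^++\ell\Lambda_0+\mathbb{Z}\delta$); your $V_{t_{N\Theta^\vee}}(\Lambda)$ has dominant classical component and is not $\lie g$--stable. This is easily repaired --- length--additivity $\ell(t_{-N\Theta}w_0)=\ell(t_{-N\Theta})+\ell(w_0)$ (Lemma \ref{length}) and cofinality of $(t_{-N\Theta}w_0)_N$ go through verbatim --- but it should be stated with the correct Weyl group elements.
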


The special case of this theorem (when $\Lambda$ is a multiple of $\Lambda_0$) was proved in [Theorem 9, \cite{FoL}]. The semi--infinite fusion construction was already established 
in \cite{FFsemi} for $\lie g= \lie sl_2$ and in \cite{kedemsemi} for $\lie g=\lie sl_n$.
As a further application we prove some results on the generalization of Schur positivity, that gives some additional evidence to the conjecture that appeared in \cite{chfsa}. We now recall the conjecture on generalization of Schur positivity.
The following  was conjectured in \cite[Section 2.3]{chfsa} by Chari, Fourier and Sagaki:
\begin{conj}\label{main--conj}
Let $\lie g$ be a simple Lie algebra, and $\lambda_1, \lambda_2, \mu_1, \mu_2 \in P^+$ such that 
\begin{enumerit}
 \item[(i)] $\lambda_1+\lambda_2=\mu_1+\mu_2$, 
 \item[(ii)]\label{schurcond} $\min\{\lambda_1(h_\alpha),\lambda_2(h_\alpha)\}\le \min\{\mu_1(h_\alpha),\mu_2(h_\alpha)\}$, for all $\alpha\in R^+$,                                                                                             
\end{enumerit}
then there exists a surjective $\lie g$--module map 
$V(\mu_1) \otimes V(\mu_2)\rightarrow V(\lambda_1) \otimes V(\lambda_2)\rightarrow 0.$
\end{conj}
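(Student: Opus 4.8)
The plan is to pass from $\lie g$ to the current algebra $\lie g[t]$ and work with fusion products. Fix distinct $z_1\neq z_2$ and form the fusion products $\ev_{z_1}V(\mu_1)\ast\ev_{z_2}V(\mu_2)$ and $\ev_{z_1}V(\lambda_1)\ast\ev_{z_2}V(\lambda_2)$. Each is a cyclic graded $\lie g[t]$-module generated in degree $0$ by a vector of the common weight $\xi:=\lambda_1+\lambda_2=\mu_1+\mu_2$, and, forgetting the grading and the $\lie g[t]$-action, each is isomorphic \emph{as a $\lie g$-module} to the corresponding tensor product $V(\mu_1)\otimes V(\mu_2)$, resp. $V(\lambda_1)\otimes V(\lambda_2)$ (a fusion product is the associated graded of the tensor product of the underlying evaluation modules with respect to a degree filtration, and evaluation does not change the $\lie g$-structure). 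Hence it suffices to produce a surjective map of graded $\lie g[t]$-modules
\[
\ev_{z_1}V(\mu_1)\ast\ev_{z_2}V(\mu_2)\ \twoheadrightarrow\ \ev_{z_1}V(\lambda_1)\ast\ev_{z_2}V(\lambda_2)
\]
and restrict it to $\lie g$.

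Next I would identify both sides with modules carrying an explicit presentation. For $\nu_1,\nu_2\in P^+$ and each $\alpha\in R^+$, let $\bxi^{(\nu)}_\alpha$ be the partition of $(\nu_1+\nu_2)(h_\alpha)$ with parts $\{\nu_1(h_\alpha),\nu_2(h_\alpha)\}$. The claim is that $\ev_{z_1}V(\nu_1)\ast\ev_{z_2}V(\nu_2)$ is — independently of $z_1,z_2$ — the module cut out by the relations $\lie n^+[t]$, $\lie h\otimes t\mathbb{C}[t]$, the $\lie g$-integrability relations for $\xi$, and the Chari--Venkatesh relations read off from the two-part partitions $\bxi^{(\nu)}_\alpha$. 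This is the one substantial input, and it is exactly here that I would invoke the fusion-product structure of Demazure modules: for weights $\nu_i$ accessible to \thmref{mapsdem} (those of the form $\ell\mu_i+\tau_i$ with $\tau_i\in\mathcal P^+_\ell$), the fusion product on the left can be matched, level by level, with a Demazure module $D(\ell,\cdot)$ whose graded character and defining relations are known from \cite{chve} (see also \cite{FoL}, \cite{Naoi}); this pins the module down up to isomorphism and shows the independence of the evaluation parameters.

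With the identification in hand the combinatorics is immediate. For partitions of a fixed integer $N$ into at most two parts, $(N-p,p)$ with $p\le N-p$, the dominance order reads $(N-p,p)\trianglerighteq(N-q,q)\iff p\le q$; so hypothesis (ii), i.e. $\min\{\lambda_1(h_\alpha),\lambda_2(h_\alpha)\}\le\min\{\mu_1(h_\alpha),\mu_2(h_\alpha)\}$ for all $\alpha\in R^+$, says precisely that $\bxi^{(\lambda)}_\alpha\trianglerighteq\bxi^{(\mu)}_\alpha$ for every $\alpha$. Since the Chari--Venkatesh relations attached to a less dominant family of partitions are the weaker ones, the comparison of these modules (\cite{chve}) yields the desired surjection $\ev_{z_1}V(\mu_1)\ast\ev_{z_2}V(\mu_2)\twoheadrightarrow\ev_{z_1}V(\lambda_1)\ast\ev_{z_2}V(\lambda_2)$; restricting to $\lie g$ and using the two $\lie g$-character identities from the first step gives the surjection $V(\mu_1)\otimes V(\mu_2)\twoheadrightarrow V(\lambda_1)\otimes V(\lambda_2)$.

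The main obstacle is the middle step: proving that the fusion product of the two evaluation modules equals the Chari--Venkatesh module of the two-part partitions and, crucially, is independent of $z_1,z_2$. This is open in general, which is why the argument establishes the conjecture only in the range of weights to which \thmref{mapsdem} applies rather than in full — hence the status ``further evidence'' rather than a proof. A secondary technical point is that the comparison of Chari--Venkatesh modules and their presentation are cleanest for partitions indexed by the simple roots, whereas hypothesis (ii) involves all $\alpha\in R^+$; one must check that the extra relations attached to non-simple positive roots are respected, which again follows from the explicit Demazure presentations once the identification above is in place.
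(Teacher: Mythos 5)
What you are being asked to prove is the Chari--Fourier--Sagaki conjecture, which this paper does not prove either: it only establishes special cases (\propref{schurpostivity}, \thmref{genschurpos}) as further evidence, and your proposal is candid about landing in exactly the same place. Within that limitation your route is essentially the paper's: replace tensor products by fusion products of evaluation modules, use that a fusion product is isomorphic as a $\lie g$--module to the underlying tensor product, produce the surjection at the level of graded $\lie g[t]$--modules by comparing presentations coming from \cite{chve} and \thmref{mapsdem}, and restrict to $\lie g$. Your dominance bookkeeping is also correct: a smaller minimum gives a stronger relation, so condition (ii) puts the $\mu$--pair on the source side of the surjection.

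The one concrete place where your outline is weaker than the paper's argument is that you ask for an identification of \emph{both} fusion products with presented (Chari--Venkatesh/Demazure) modules. The paper needs such an identification only for the source. It introduces the module $\bold{V}(\lambda,\mu)$, cut out by the relations $(x^-_\alpha\otimes t^2)w=0$ and $(x^-_\alpha\otimes t)^{\min\{\lambda(h_\alpha),\mu(h_\alpha)\}+1}w=0$, and observes in \lemref{2-foldlem} that for \emph{arbitrary} $\mu_1,\mu_2\in P^+$ there is a surjection $\bold{V}(\mu_1,\mu_2)\twoheadrightarrow \ev_{z_1}V(\mu_1)*\ev_{z_2}V(\mu_2)$; this is a one-line consequence of the evaluation relations, so no presentation and no parameter-independence is needed on the target side. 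Consequently, in \propref{schurpostivity} the target pair is completely arbitrary and only the source pair must have the special form $(d_i\ell\omega_i,\lambda)$ with $d_i\omega_i(h_\Theta)\le 1$ and $\lambda\in\mathcal{P}^+_\ell$. Note also that \thmref{mapsdem} by itself identifies a Demazure module with a fusion of \emph{Demazure} factors; to convert one factor into an irreducible evaluation module you additionally need \propref{dem=kr}, which is what forces $d_i\omega_i(h_\Theta)\le 1$ --- a more restrictive condition than the ``$\ell\mu_i+\tau_i$ with $\tau_i\in\mathcal{P}^+_\ell$'' shape you indicate. If you insert the intermediate module $\bold{V}(\mu_1,\mu_2)$ into your scheme, your argument reproduces the paper's partial results; as written, the symmetric identification requirement both narrows the set of cases you can cover and places the genuinely open problem (identifying the fusion product of two arbitrary evaluation modules) on the target side, where the paper shows it is not needed.
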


We give positive answer to this conjecture in certain important cases. 
Indeed we prove stronger statements in this article, we prove that the fusion product of two irreducible $\lie g$--modules in certain cases turns out be Demazure modules and use this fact to get
elegant presentation for $V(\lambda_1)*V(\lambda_2)$ for some special choices of $\lambda_1, \lambda_2$. We use this elegant presentation to prove that there exists surjective $\lie g[t]$--module maps between the appropriate fusion products (see Theorem \ref{genschurpos}).

The paper is organized as follows. Section \ref{preliminaries} has the basic notation and elementary
results needed for the rest of the paper. In section \ref{weyl}, we give the definition of Weyl and Demazure modules and recall their presentations. In section \ref{fusion}, we prove that the fusion product of an irreducible module and the Demazure modules of the same level again turns out to be a Demazure module.
The key steps of the proof are to show that there is a surjective map between appropriate modules and its dimension matches. The existence of the surjective map proved using 
the presentation of the Demazure module that was established in \cite{chve}. 
In section \ref{affine} we calculate the dimension of the Demazure modules using Demazure operators and give the limit construction of $V(\ell\Lambda_0+\lambda)$.
In section \ref{schur} we give further evidence to the conjecture on the generalization of the Schur positivity.

{\em Acknowledgements: The author is very grateful to Vyjayanthi Chari for many useful discussions and encouragement. This work was partially supported by a grant from the Niels Henrik Abel Board.
The author thanks the Niels Henrik Abel Board and the Department of Mathematics at UCR for their financial support. The author acknowledges the hospitality and excellent working
conditions at the Department of Mathematics at UCR where part of this work
was done. The author likes to thank the referee for his useful comments on the previous manuscript that helped to improve the readability of this manuscript.
}

\section{Preliminaries}\label{preliminaries} Our base field will be the complex numbers $\mathbb{C}$ throughout. We let $\mathbb{Z}$ (resp. $\mathbb{Z}_+$, $\mathbb{N}$) be the set of integers (resp. non--negative, positive integers) and $\mathbb{C}[t]$  be the polynomial ring in a variable $t$.

\subsection{} We will broadly follow the notations of \cite{chve}. Let $\lie g$ be an arbitrary finite dimensional simple Lie algebra with Cartan subalgebra $\lie h$ and Borel subalgebra $\lie b.$  
We assume that the rank of $\lie g$ is $n$, i.e., $\dim\lie h=n$. Denote $I=\{1,\dots,n\}$. Let $R$ be the set of roots, $R^+$ the set of positive roots and $\{\alpha_i: i\in I\}$ the set of simple roots of $\lie g$ with respect to $\lie h$.
Let $P,Q,P^+,Q^+$ be the weight lattice, the root lattice, the sets of dominant weights and non-negative integer linear
combinations of simple roots respectively. The Weyl group $W$ of $\lie g$ is generated by the simple reflections $s_i=s_{\alpha_i}$ associated to the simple roots. Let $w_0$ be the longest element of $W$. 
Let $(\ , \ )$ be the non--degenerate, normalized $W$--invariant symmetric bilinear form on $\lie h^*$ such that the square length of a long root is two.
For $\alpha\in R$, $h_\alpha\in \lie h$ denotes the corresponding co--root and set $d_\alpha=2/(\alpha,\alpha)$.
Let $\{\omega_i: i\in I\}\subset\lie h^*$ be the set of fundamental weights, i.e., $(\omega_i,d_j\alpha_j)=\delta_{i,j}$, where $\delta_{i,j}$ is the Kronecker delta symbol.
Let $\lie g_\alpha$ be the root space of $\lie g$ corresponding to the root $\alpha\in R$  and  set $\lie n^\pm=\bigoplus_{\alpha\in R^+}\lie g_{\pm\alpha}$ and $\lie g_{\pm\alpha}=\mathbb{C}x^\pm_{\alpha}$. 
Denote by  $\Theta\in R^+$ be the highest root in $R$ and recall that $[x^\pm_\Theta,\lie n^\pm]=0$. For $i\in I$, we simply  write $x^\pm_i$, $h_i$, $d_i$ for $x^\pm_{\alpha_i}$, $h_{\alpha_i}$, $d_{\alpha_i}$.

\subsection{} For a given Lie algebra $\lie a$, we let $\bu(\lie a)$ be its universal enveloping algebra. The current Lie algebra of $\lie g$ is denoted by $\lie g[t]$. As a vector space it is just $\lie g\otimes \mathbb{C}[t]$ and the Lie bracket is defined in the natural way: $[a\otimes f, b\otimes g]=[a,b]\otimes f g$, for all $a,b\in\lie g$ and $f,g\in\mathbb{C}[t]$.   
Denote by $\lie g[t]_+$ the ideal $\lie g\otimes t\mathbb{C}[t]$ and 
we freely identify $\lie g$ with the Lie subalgebra $\lie g\otimes 1$ of $\lie g[t]$. We clearly have the isomorphism of vector spaces $$\lie g[t]=\lie g[t]_+\bigoplus\lie g, \qquad \bu(\lie g[t])\cong \bu(\lie g[t]_+)\otimes\bu(\lie g).$$ 
The degree grading on $\mathbb{C}[t]$ defines a natural $\mathbb{Z}_+$--grading on $\lie g[t]$ and hence also on  $\bu(\lie g[t])$: an element of the form $(a_1\otimes t^{r_1})\cdots (a_s\otimes t^{r_s})$ has  grade $r_1+\cdots+r_s$. Denote by $\bu(\lie g[t])[r]$ the subspace of grade $r$.

\subsection{} A representation $V$ of $\lie g[t]$ is {\em graded} if it admits a $\mathbb{Z}$--graded vector space decomposition which admits a compatible Lie algebra action of $\lie g[t]$, i.e.,
$$V=\bigoplus_{r\in\mathbb{Z}}V[r],\qquad(\lie g\otimes t^s)V[r]\subset V[r+s],\ \  r\in\mathbb{Z}, \ \ s\in\mathbb{Z}_+.$$  
A morphism between two graded $\lie g[t]$--modules, by definition, is a degree zero morphism of $\lie g[t]$--modules. 
Let $M$ be an $\lie g$--module and $z\in\mathbb{C}$. Define a new $\lie g[t]$--module structure on $M$ with respect to $z$ by: $(a\otimes t^r)m=z^ram$, and denote this module as $\ev_z M$. It is easy to see that $\ev_zM$ is an irreducible $\lie g[t]$--module if and only if $M$ is an irreducible $\lie g$--module. 
Moreover the module $\ev_0M$ is a graded $\lie g[t]$--module and $\left(\ev_0M\right)[0]=M$ and in particular, $\lie g[t]_+(\ev_0M) =0$.

\subsection{} Given $\mu\in P^+$, we let $V(\mu)$ be the irreducible finite--dimensional $\lie g$--module generated by an element $v_\mu$ with defining relations $$x_i^+v_\mu=0,\ \ h_iv_\mu=\mu(h_i)v_\mu,\ \ (x_i^-)^{\mu(h_i)+1}v_\mu=0,\ \ i\in I.$$ It is well--known that any finite--dimensional $\lie g$--module $V$ is isomorphic to a direct sum of irreducible modules $V(\mu)$, $\mu\in P^+$. Further, we have a weight space decomposition with respect to $\lie h$, $$V=\bigoplus_{\nu\in P} V_\nu,\ \ \ \ V_\nu=\{v\in V: hv=\nu(h)v,\ \ h\in\lie h\},$$ and we set $\wt V=\{\nu\in P: V_\nu\ne 0\}$.

\subsection{Affine Lie algebra}
Let $\Lhg$ be the untwisted affine Kac--Moody algebra corresponding
to the extended Dynkin diagram of $\lie g$. $$
\Lhg=\lie g\otimes_\mathbb{C}\mathbb{C}[t,t^{-1}]\oplus \mathbb{C} K\oplus \mathbb{C} d
$$
Here $K$ is the canonical central element and $d$ denotes the derivation $d=t\frac{d}{dt}$. Naturally we consider the Lie algebra $\lie g$
as a subalgebra of  $\Lhg$.
In the same way, $\lie h$ and $\lie b$ are
subalgebras of the Cartan subalgebra $\Lhh$ respectively the Borel
subalgebra $\Lhb$ of $\Lhg$:
\begin{equation*}
\Lhh=\lie h\oplus\mathbb{C} K\oplus\mathbb{C} d,\quad
\Lhb=\lie b\oplus\mathbb{C} K\oplus\mathbb{C} d\oplus\lie g\otimes_\mathbb{C} t\mathbb{C}[t]
\end{equation*}
Let $\delta$ be the positive non--divisible imaginary root of $\Lhg$ with respect to $\Lhb$ and $\ph^+$ be the set of dominant weights of $\Lhg$. Denote $\widehat{W}$ (resp. $\widetilde{W}$) the (resp. extended) affine Weyl group of $\Lhg$.
Then we have $\widetilde{W}=\Sigma\semi\widehat{W}$, where $\Sigma$ is the group of Dynkin diagram automorphisms. We let $\Lambda_0$ be the fundamental weight corresponding to the extra node in the extended Dynkin diagram of $\lie g$.
 We denote $V(\Lambda)$ by  the  irreducible 
$\Lhg$--highest weight module of highest weight $\Lambda \in \ph^+$. 
For an element $\mu \in\bigoplus_{i=1}^n\mathbb{Z}d_i\omega_i$, denote by $t_\mu\in \widetilde{W}$ be the corresponding translation. In particular, we have 
$$t_{\mu}(b\Lambda_0+\lambda)=b\Lambda_0+\lambda+b\mu \ \ \ (\emph{mod} \ \mathbb{C}\delta)$$ for $\lambda\in P^+$ and $b\in \mathbb{C}.$
\subsection{} We conclude this section with the following simple lemma which will be needed later (see \cite[Section 1.3]{chve}), \begin{lem}\label{dalpha} Suppose that   $\lambda\in P^+$ is such that $\lambda=\sum_{i\in I}d_is_i\omega_i$ for some $s_i\in \mathbb{Z}_+$. Then for $\alpha\in R^+$, there exists $s_\alpha\in \mathbb{Z}_+$ such that $\lambda(h_\alpha) =d_\alpha s_\alpha$.\end{lem}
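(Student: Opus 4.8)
The plan is to reduce this lemma to an elementary computation with the normalized invariant form. First I would recall that for any $\alpha\in R^+$ the coroot $h_\alpha$ corresponds under the form to $2\alpha/(\alpha,\alpha)$, so that
\[\lambda(h_\alpha)=\langle\lambda,\alpha^\vee\rangle=\frac{2(\lambda,\alpha)}{(\alpha,\alpha)}=d_\alpha\,(\lambda,\alpha),\]
using the definition $d_\alpha=2/(\alpha,\alpha)$. Hence it suffices to show that $(\lambda,\alpha)$ is a non-negative integer, and then to take $s_\alpha:=(\lambda,\alpha)$.

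To prove this, I would expand everything in the simple-root and fundamental-weight bases. Since $\alpha\in R^+$, write $\alpha=\sum_{j\in I}n_j\alpha_j$ with all $n_j\in\mathbb{Z}_+$, and by hypothesis $\lambda=\sum_{i\in I}d_is_i\omega_i$ with $s_i\in\mathbb{Z}_+$. Bilinearity then gives $(\lambda,\alpha)=\sum_{i,j\in I}d_is_in_j(\omega_i,\alpha_j)$. The defining property of the fundamental weights, $(\omega_i,d_j\alpha_j)=\delta_{i,j}$, rewrites as $(\omega_i,\alpha_j)=\delta_{i,j}/d_j$, so the constants $d_i$ cancel and the double sum collapses to $(\lambda,\alpha)=\sum_{i\in I}s_in_i\in\mathbb{Z}_+$. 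Combining this with the previous paragraph, $\lambda(h_\alpha)=d_\alpha\sum_{i\in I}s_in_i$, which is the asserted identity with $s_\alpha=\sum_{i\in I}s_in_i$.

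I do not expect any genuine obstacle here; the only point requiring care is the bookkeeping of the normalization constants, namely checking that the factors $d_i$ occurring in the hypothesis $\lambda=\sum_{i}d_is_i\omega_i$ cancel precisely against the $1/d_j$ coming from $(\omega_i,\alpha_j)=\delta_{i,j}/d_j$, and that the identity $\lambda(h_\alpha)=d_\alpha(\lambda,\alpha)$ is applied with the correct convention for the coroot $h_\alpha$. Once this is set up the statement is immediate; equivalently one could note that $\lambda$ lies in the lattice $\bigoplus_{i\in I}\mathbb{Z}d_i\omega_i$ and invoke integrality of the pairing with $R^+$, but the direct computation is the shortest route.
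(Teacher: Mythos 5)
Your proof is correct, and it is the standard verification: the paper itself gives no argument for this lemma, simply citing \cite[Section 1.3]{chve}, and your computation (writing $\lambda(h_\alpha)=d_\alpha(\lambda,\alpha)$, expanding $\alpha=\sum_j n_j\alpha_j$, and using $(\omega_i,\alpha_j)=\delta_{i,j}/d_j$ so that $s_\alpha=\sum_i s_i n_i$) is exactly the routine normalization bookkeeping that reference carries out. No gaps; you may stop there.
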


\section{Weyl and Demazure Modules}\label{weyl}
In this section, we recall the definition of local Weyl modules and Demazure modules and state the required results for $\lie g$--stable Demazure modules. 

\subsection{Weyl Modules}\label{localweylxi}  The definition of the local Weyl modules was given originally in \cite{CPweyl} and later in \cite{CFK} and \cite{FL}.
Given $\lambda\in P^+,$ the local Weyl module $W_{\loc}(\lambda)$, is the cyclic $\lie g[t]$--module generated by an element $w_\lambda$ with the following defining relations:  for $i\in I$ and $s\in\mathbb{Z}_+$,  \begin{gather}\label{locweylg}(x^+_i\otimes\mathbb{C}[t])w_\lambda=0,\ \  (h_i\otimes t^s)w_\lambda=\lambda(h_i)\delta_{s,0}w_{\lambda},\\ \label{integloc} (x_i^-\otimes 1)^{\lambda(h_i)+1}w_\lambda=0.\end{gather}  
It is easy to see that $\wt W_{\loc}(\lambda)\subset\lambda-Q^+$ and that $\dim W_{\loc}(\lambda)_\lambda=1$, hence  $W_{\loc}(\lambda)$ is an indecomposable module.
Note {{that $W_{\loc}(0)$ is isomorphic to  the trivial $\lie g[t]$--module}}.
It was proved in \cite{CPweyl} (see also \cite{CFK}) that the local Weyl modules are  finite--dimensional and so, in particular, we have \begin{equation}\label{intega}(x^-_\alpha\otimes 1)^{\lambda(h_\alpha)+1}w_\lambda=0,\ \ \ \alpha\in R^+.\end{equation} 
We declare the grade of $w_\lambda$ to be zero, so that the local Weyl module is graded by $\mathbb{Z}_+$. It follows that $W_{\loc}(\lambda)[0]\cong_{\lie g} V(\lambda),$ and
moreover,  $\ev_0 V(\lambda)$ is the unique graded irreducible quotient of $W_{\loc}(\lambda)$.
\vskip 6pt

\subsection{Demazure Modules} Let us begin with the traditional definition of Demazure modules. For $\Lambda \in \ph^+$ and $w\in\widehat{W}$, the $U(\Lhb)$--submodule $V_w(\Lambda)=U(\Lhb)V(\Lambda)_{w\Lambda}$ is the {\it Demazure submodule} of $V(\Lambda)$ associated to $w$. 
More generally, one can associate a Demazure module with any element of $\widetilde{W}$ as follows: For $\Lambda\in \ph^+$, $\sigma \in \Sigma$ and $w\in \widehat{W}$, define
$$V_{\sigma w}(\Lambda)=V_{\sigma w\sigma^{-1}}(\sigma(\Lambda))\ \ \text{and} \ \ V_{w\sigma}(\Lambda)=V_w(\sigma(\Lambda)).$$
In this article, we are mainly interested in the $\lie g$--stable Demazure modules, which are in particular modules for the associated current algebra $\lie g[t]$.
It is known that $V_w(\Lambda)$ is $\lie g$--stable if and only if $w\Lambda\in -P^++\ell\Lambda_0+\mathbb{Z}\delta$, where $\ell$ is the level of $\Lambda$. For $\lambda\in P^+$, $\ell\in \mathbb{N}$ and $m\in \mathbb{Z}$, there exists a unique $\Lambda\in \ph^+$ such that
$w_0\lambda+\ell\Lambda_0+m\delta\in \widehat{W}\Lambda.$ For an element $w\in \widehat{W}$ such
that $w\Lambda=w_0\lambda+\ell\Lambda_0+m\delta,$ we write $D(\ell, \lambda)[m]=V_w(\Lambda).$ We are indeed interested only in $\lie g[t]$--module structure of $D(\ell, \lambda)[m]$ and it is easy
to see that the $\lie g[t]$--structure of $D(\ell, \lambda)[m]$ is independent of $m$. So it makes sense to denote $D(\ell, \lambda)$ by these isomorphism class of $\lie g[t]$--modules. When $\ell=0$, we simply take $\lambda=0$ and $D(\ell, \lambda)$ is the trivial module in this case.

\vskip 6pt
Now we give one of the equivalent definition of $D(\ell, \lambda)$ (see \cite[Proposition 3.6]{Naoi}) which is appropriate to our study.
We refer the reader to \cite{FoL} and \cite{Naoi} for more details.
The $\lie g[t]$--graded Demazure module $D(\ell,\lambda)$ of level $\ell\in\mathbb{Z}_+$ and weight $\lambda\in P^+$ is the graded quotient of $W_{\loc}(\lambda)$ generated by the elements,
 \begin{gather} \label{demrelo}
 \{(x^-_\alpha\otimes t^p)^{r+1}w_\lambda:\ \ p\in\mathbb{Z}_+,\ \   r\ge\max\{0,\lambda(h_\alpha)-d_\alpha\ell p\},\ \ \text{for} \ \alpha\in R^+\}. \end{gather}
Again, we have $D(\ell,\lambda)[0]\cong_{\lie g} V(\lambda)$. The defining relations of these modules are greatly simplified in \cite{chve}. 
We first fix some notations to state the results of \cite{chve}. Let $\ell\in\mathbb{Z}_+$, $\lambda\in P^+$ be given.
For $\alpha\in R^+$, with $\lambda(h_\alpha)>0$,  let  $s_\alpha,m_\alpha\in\mathbb{N}$ be the unique positive integers so that  $$\lambda(h_{\alpha})=(s_{\alpha}-1)d_{\alpha}\ell+m_{\alpha},\ \ \ 0<m_{\alpha}\le d_{\alpha}\ell,$$ where we recall that $d_\alpha=2/(\alpha,\alpha)$. If $\lambda(h_\alpha)=0$ set $s_\alpha=0=m_\alpha$.
With the notations above, we have the following theorem (see \cite[Theorem 2, Section 3.5]{chve}):

 \begin{thm}\label{genreldem1} Let   $\ell\in\mathbb{Z}_+$ and $\lambda\in P^+$.
  Then $D(\ell,\lambda)$ is the quotient of $W_{\loc}(\lambda)$ by the submodule generated by the elements
 \begin{gather}\label{demrel00}\{ (x^-_{\alpha}\otimes t^{s_{\alpha}})w_\lambda: \alpha\in R^+\}\ \bigcup\  \{ (x^-_{\alpha}\otimes t^{s_{\alpha}-1})^{m_{\alpha}+1}w_\lambda: \alpha\in R^+, \ m_\alpha<d_\alpha\ell\}\ .\end{gather}\end{thm}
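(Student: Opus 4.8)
The plan is the following. One inclusion is free: each generator listed in \eqref{demrel00} already occurs among those in \eqref{demrelo}. Indeed, $(x^-_\alpha\otimes t^{s_\alpha})w_\lambda$ is the element of \eqref{demrelo} with $p=s_\alpha$ and $r=0$, and $\lambda(h_\alpha)-d_\alpha\ell s_\alpha=m_\alpha-d_\alpha\ell\le 0$ shows that $r=0$ meets the required lower bound; likewise $(x^-_\alpha\otimes t^{s_\alpha-1})^{m_\alpha+1}w_\lambda$ is the element with $p=s_\alpha-1$ and $r=m_\alpha$, and here $\lambda(h_\alpha)-d_\alpha\ell(s_\alpha-1)=m_\alpha$, so the bound is met with equality. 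Hence, writing $\widetilde D$ for the quotient of $W_{\loc}(\lambda)$ by the submodule generated by \eqref{demrel00}, the submodule generated by \eqref{demrel00} is contained in the one generated by \eqref{demrelo}, so there is a canonical surjection $\widetilde D\twoheadrightarrow D(\ell,\lambda)$, and the theorem reduces to showing that it is injective, i.e. that every relation in \eqref{demrelo} already holds in $\widetilde D$.

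First I would reduce to $\lie{sl}_2$, one root at a time. Fix $\alpha\in R^+$, let $\lie{sl}_2^\alpha\subseteq\lie g$ be the rank-one subalgebra attached to $\alpha$, and put $m=\lambda(h_\alpha)$, $D=d_\alpha\ell$, so that $s_\alpha,m_\alpha$ are exactly the integers attached to the pair $(m,D)$. From \eqref{locweylg}, \eqref{intega} and an induction on $\het\alpha$ one gets $(x^+_\alpha\otimes\mathbb{C}[t])w_\lambda=0$ and $(h_\alpha\otimes t^j)w_\lambda=m\,\delta_{j,0}w_\lambda$, so the cyclic $\bu(\lie{sl}_2^\alpha[t])$--submodule of $W_{\loc}(\lambda)$ generated by $w_\lambda$ is a quotient of the $\lie{sl}_2[t]$ local Weyl module $W_{\loc}^{\lie{sl}_2}(m)$. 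It therefore suffices to prove the following rank-one statement: inside $W_{\loc}^{\lie{sl}_2}(m)$, the $\lie{sl}_2[t]$--submodule $N$ generated by $(x^-\otimes t^{s_\alpha})w$ and, when $m_\alpha<D$, by $(x^-\otimes t^{s_\alpha-1})^{m_\alpha+1}w$ contains $(x^-\otimes t^p)^{r+1}w$ for all $p\ge 0$ and all $r\ge\max\{0,m-Dp\}$.

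For the rank-one statement I would argue by induction, combining three mechanisms. (i) From $[h\otimes t^j,x^-\otimes t^p]=-2\,x^-\otimes t^{p+j}$ together with $(h\otimes t^j)w=0$ for $j\ge 1$, one obtains $(x^-\otimes t^p)w\in N$ for all $p\ge s_\alpha$, hence $(x^-\otimes t^p)^{r+1}w\in N$ for every $r\ge 0$ once $p\ge s_\alpha$; this settles all pairs with $p\ge s_\alpha$. (ii) Since $x^-\otimes 1$ acts on $W_{\loc}^{\lie{sl}_2}(m)$, membership in $N$ is preserved when $r$ is increased; combined with the integrability relation $(x^-\otimes 1)^{m+1}w=0$ this settles $p=0$, and combined with the explicit generator it settles $p=s_\alpha-1$ in the case $m_\alpha<D$. (iii) The remaining pairs, namely those with $1\le p\le s_\alpha-2$ together with $p=s_\alpha-1$ when $m_\alpha=D$, form the serious case; here I would use the classical straightening identities of Garland in $\bu(\lie{sl}_2[t])$: modulo the left ideal $\bu(\lie{sl}_2[t])\,(x^+\otimes\mathbb{C}[t])$, which annihilates $w$, these identities produce relations among the elements $(x^-\otimes t^{q})^{k}w$ for varying $q$ and $k$, and one feeds them into an induction on $p$ (exploiting the hypothesis $r\ge m-Dp$ to keep the exponents within the admissible range, and the relation $(x^-\otimes 1)^{m+1}w=0$ to close the induction) so as to reduce every $(x^-\otimes t^p)^{r+1}w$ to the cases already handled in (i) and (ii).

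The decisive step is clearly (iii). The bracket and $\lie{sl}_2$--module manipulations of (i)--(ii) only reach the exponents $p\ge s_\alpha$, $p=0$, and the one or two values occurring directly in \eqref{demrel00}; filling in the interior range $1\le p\le s_\alpha-2$ genuinely requires the precise combinatorics of the Garland straightening and a careful accounting of which pairs $(p,r)$ it connects, and this is exactly where the simplification of the presentation lies. One could instead try to finish by a dimension count, matching $\dim\widetilde D$ against the value of $\dim D(\ell,\lambda)$ supplied by the Demazure character formula, but exhibiting a spanning set of $\widetilde D$ of the correct size again rests on the same straightening identities, so the combinatorial difficulty is not really avoided.
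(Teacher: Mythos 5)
Your proposal does not actually prove the theorem: it sets up the right skeleton and then stops exactly where the work begins. The easy containment (every generator of \eqref{demrel00} is among the relations \eqref{demrelo}), the surjection $\widetilde D\twoheadrightarrow D(\ell,\lambda)$, and the reduction to the rank-one subalgebra $\lie{sl}_2^\alpha[t]$ are all fine, and your steps (i)--(ii) correctly dispose of the exponents $p\ge s_\alpha$, $p=0$, and $p=s_\alpha-1$ when $m_\alpha<d_\alpha\ell$. But the theorem's entire content is your step (iii): showing that for $1\le p\le s_\alpha-2$ (and $p=s_\alpha-1$ when $m_\alpha=d_\alpha\ell$) the elements $(x^-_\alpha\otimes t^p)^{r+1}w_\lambda$, $r\ge\lambda(h_\alpha)-d_\alpha\ell p$, lie in the submodule generated by just the two families in \eqref{demrel00}. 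You only assert that Garland's straightening identities, ``fed into an induction on $p$,'' will do this; no identity is written down, no induction hypothesis is formulated, and no accounting of which pairs $(p,r)$ are reached is given. As you yourself note, this is ``exactly where the simplification of the presentation lies,'' so what remains is a plan rather than a proof. The alternative you mention (a dimension count against the Demazure character formula) has the same status: producing a spanning set of $\widetilde D$ of the right size requires the same straightening analysis.

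For comparison: the paper does not prove this theorem at all; it quotes it from \cite[Theorem 2, Section 3.5]{chve}. In that reference the missing step is precisely the hard part, and it is carried out through a rank-one theory of graded quotients of $W_{\loc}(m)$ attached to partitions (the modules $V(\xi)$), with Garland-type identities in $\bu(\lie{sl}_2[t])$ used in a delicate induction to compare the ideals of relations; the Demazure case is then the ``fattest'' partition of $\lambda(h_\alpha)$ with parts bounded by $d_\alpha\ell$. So your outline is consistent in spirit with how the cited proof goes, but until the interior-range induction is actually executed (or the relevant results of \cite{chve} are invoked explicitly), there is a genuine gap.
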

\vskip 6pt
 We record the following simple fact for future use.
 \begin{cor}
  Let $\ell\in\mathbb{Z}_+$. For $\lambda\in \mathcal{P^+_\ell}$, $D(\ell,\lambda)\cong \ev_0 V(\lambda)$ as $\lie g[t]$--modules.
 \end{cor}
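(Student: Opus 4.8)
The plan is to read off from the defining relations that, when $\lambda\in\mathcal{P^+_\ell}$, the ideal $\lie g[t]_+$ annihilates the cyclic generator $w_\lambda$ of $D(\ell,\lambda)$; then $D(\ell,\lambda)$ is forced to be a quotient of $\ev_0 V(\lambda)$, and irreducibility of the latter finishes the job. First I would unwind the hypothesis $\lambda\in\mathcal{P^+_\ell}$: since $h_0=K-h_\Theta$, $\langle\Lambda_0,h_\Theta\rangle=0$ and $\langle\lambda,K\rangle=0$, the weight $\ell\Lambda_0+\lambda$ lies in $\ph^+$ exactly when $\lambda\in P^+$ and $\lambda(h_\Theta)\le\ell$. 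The key point --- and the one place where some honest input is needed --- is the sharper estimate $\lambda(h_\alpha)\le d_\alpha\ell$ for \emph{every} $\alpha\in R^+$. This I would prove by writing $\lambda(h_\alpha)=d_\alpha(\lambda,\alpha)$ and $\lambda(h_\Theta)=(\lambda,\Theta)$ (using that $\Theta$ is long), and noting that $\Theta-\alpha\in Q^+$ because $\Theta$ is the highest root, so that $(\lambda,\alpha)\le(\lambda,\Theta)\le\ell$ since $\lambda\in P^+$ pairs non-negatively with $Q^+$.

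Granting the bound $\lambda(h_\alpha)\le d_\alpha\ell$, for every $\alpha\in R^+$ and every $p\ge 1$ one has $\lambda(h_\alpha)-d_\alpha\ell p\le d_\alpha\ell(1-p)\le 0$, so the relation in \eqref{demrelo} indexed by this $\alpha$, this $p$, and $r=0$ simply says $(x^-_\alpha\otimes t^p)w_\lambda=0$ in $D(\ell,\lambda)$. (Equivalently, in the notation preceding \thmref{genreldem1} the bound forces $s_\alpha\le 1$, and $(x^-_\alpha\otimes t)w_\lambda=0$ then propagates to all positive powers of $t$ on applying $h_\alpha\otimes t^{p-1}$.) Thus $(\lie n^-\otimes t\mathbb{C}[t])w_\lambda=0$; together with $(\lie n^+\otimes\mathbb{C}[t])w_\lambda=0$ and $(\lie h\otimes t\mathbb{C}[t])w_\lambda=0$, which already hold in $W_{\loc}(\lambda)$ by \eqref{locweylg}, this gives $\lie g[t]_+w_\lambda=0$.

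To conclude, I would invoke the standard fact that a $\lie g[t]$--module generated by a vector killed by $\lie g[t]_+$ coincides with the $\lie g$--module that vector generates, equipped with the zero action of $\lie g[t]_+$. Applied to $D(\ell,\lambda)$, and using the remaining relations $x_i^+w_\lambda=0$, $h_iw_\lambda=\lambda(h_i)w_\lambda$, $(x_i^-)^{\lambda(h_i)+1}w_\lambda=0$ inherited from $W_{\loc}(\lambda)$, this exhibits $D(\ell,\lambda)$ as a quotient of $\ev_0 V(\lambda)$. Since $\ev_0 V(\lambda)$ is an irreducible $\lie g[t]$--module and $D(\ell,\lambda)\neq 0$ (indeed $D(\ell,\lambda)[0]\cong_{\lie g}V(\lambda)$), this surjection must be an isomorphism. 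The case $\ell=0$ forces $\lambda=0$ and both sides are trivial, so one may as well assume $\ell\ge 1$. I expect the only real obstacle to be the weight estimate $\lambda(h_\alpha)\le d_\alpha\ell$; once it is in hand the rest is bookkeeping with the presentation.
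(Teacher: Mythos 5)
Your proof is correct and is essentially the paper's own argument: both hinge on the bound $\lambda(h_\alpha)\le d_\alpha\lambda(h_\Theta)\le d_\alpha\ell$ for all $\alpha\in R^+$ (equivalently $s_\alpha\le 1$), and then conclude from the Demazure presentation that $\lie g[t]_+$ annihilates the generator, so that $D(\ell,\lambda)$ is a nonzero quotient of the irreducible module $\ev_0 V(\lambda)$ and hence isomorphic to it. The details you add (the justification via $(\lambda,\alpha)\le(\lambda,\Theta)$ and the propagation/irreducibility bookkeeping) are exactly what the paper compresses into ``immediate from Theorem~\ref{genreldem1}.''
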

 \begin{proof}
The condition $\ell \Lambda_0+\lambda\in \ph^+$ implies that $\lambda(h_\Theta)\le \ell$. 
Now we have $\lambda(h_\alpha)\le d_\alpha\lambda(h_\Theta)\le d_\alpha \ell$, for all $\alpha\in R^+$ and hence $s_\alpha\le 1$, for all $\alpha\in R^+$. Now it is immediate from Theorem \ref{genreldem1} that,
 $D(\ell,\lambda)\cong {\rm ev_0} V(\lambda)$ as $\lie g[t]$--modules.
 \end{proof}

\section{Fusion Product Structure of Demazure modules}\label{fusion} Let us begin by recalling   the definition of fusion products of $\lie g[t]$--modules given in \cite{FL}.  
The main result of this section is Theorem \ref{mapsdem} on the fusion product of an irreducible module and the Demazure modules of the same level.
 \subsection{} Let $V$ be a cyclic $\lie g[t]$--module generated by an element $v\in V $.
 We define a filtration  $\{F^rV\}_{r\in\mathbb{Z}_+}$ on $V$ as follows: $$F^rV= \sum_{0\le s\le r}\bu(\lie g[t])[s]v.$$
 Clearly each $F^rV$ is a $\lie g$--module. If we set $F^{-1}V=0$, then the associated graded space $\gr V=\bigoplus_{r=0}^{\infty} F^rV/F^{r-1}V$ acquires a natural structure of a cyclic graded $\lie g[t]$--module with the following action: $$(x\otimes t^s)(\overline w)=\overline{(x\otimes t^s)w},\ \ \ \overline w\in F^rV/F^{r-1}V,$$ 
 where $\overline w$ is the image of $w$ in $\gr V$.
 Moreover, $\gr V\cong V$ as $\lie g$--modules and $\gr V$ is the  cyclic $\lie g[t]$--module generated by $\bar v$. 
 
 The following lemma is simple and very useful.
 \begin{lem}\label{elemfusion} Suppose that $V$ is a cyclic $\lie g[t]$--module generated by $v\in V$. Then for all $u\in V$, $x\in\lie g$, $r\in\mathbb{N}$, $a_1,\dots, a_r\in\mathbb{C}$, we have $$(x\otimes t^r)\overline u= \overline{(x\otimes (t-a_1)\cdots (t-a_r)) u}.$$\hfill\qedsymbol
 \end{lem}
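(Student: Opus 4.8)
The plan is to reduce at once to the case in which $u$ is homogeneous for the filtration $\{F^rV\}$, say $u\in F^pV$ with $\overline u$ its image in $F^pV/F^{p-1}V$; since both sides of the asserted identity are additive in $u$, the general case follows by linearity. The one computational input is that $(t-a_1)\cdots(t-a_r)$ is a monic polynomial of degree $r$, so we may expand
$$(t-a_1)\cdots(t-a_r)=t^r+\sum_{s=0}^{r-1}c_s\,t^s,\qquad c_s\in\mathbb{C},$$
and hence
$$\bigl(x\otimes(t-a_1)\cdots(t-a_r)\bigr)u=(x\otimes t^r)u+\sum_{s=0}^{r-1}c_s\,(x\otimes t^s)u.$$

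Next I would record the elementary grading estimate $(x\otimes t^s)F^pV\subseteq F^{p+s}V$. This holds because $F^pV=\sum_{0\le s'\le p}\bu(\lie g[t])[s']v$ and left multiplication by $x\otimes t^s\in\bu(\lie g[t])[s]$ sends $\bu(\lie g[t])[s']$ into $\bu(\lie g[t])[s'+s]$, so each summand lands in $\bu(\lie g[t])[s'+s]v\subseteq F^{p+s}V$. Applying this with $s\le r-1$ gives $(x\otimes t^s)u\in F^{p+s}V\subseteq F^{p+r-1}V$, so every term in the above sum other than $(x\otimes t^r)u$ dies in the quotient $F^{p+r}V/F^{p+r-1}V$. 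Therefore, in $\gr V$,
$$\overline{\bigl(x\otimes(t-a_1)\cdots(t-a_r)\bigr)u}=\overline{(x\otimes t^r)u}.$$

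Finally, by the definition of the $\lie g[t]$--action on $\gr V$ recalled just above, $\overline{(x\otimes t^r)u}=(x\otimes t^r)\overline u$, and combining the two displayed identities proves the lemma. I do not expect any genuine obstacle here: the only thing demanding a little care is the grading bookkeeping — confirming that each lower-order term $(x\otimes t^s)u$ with $s<r$ really does sit in the filtration step strictly below $F^{p+r}V$ — together with fixing the convention that $\overline u$ is the leading (lowest-degree) graded component of $u$, so that both sides of the claimed equality lie in the single graded piece $F^{p+r}V/F^{p+r-1}V$.
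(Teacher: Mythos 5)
Your proof is correct and is precisely the computation the paper leaves implicit (the lemma is stated with its proof omitted as elementary): expand the monic polynomial $(t-a_1)\cdots(t-a_r)=t^r+\sum_{s<r}c_st^s$, use $(x\otimes t^s)F^pV\subseteq F^{p+s}V$ to kill the lower-order terms in $F^{p+r}V/F^{p+r-1}V$, and invoke the definition of the $\lie g[t]$--action on $\gr V$. One small remark: the opening ``reduction by linearity'' is superfluous and slightly misleading, since the symbol map $u\mapsto\overline u$ is not additive when leading terms cancel; but this costs nothing, because your computation applies verbatim to an arbitrary $u\in F^pV$ with $\overline u$ read as its image in $F^pV/F^{p-1}V$, which is exactly how the bar notation is used in the paper.
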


\subsection{} Let $V$ be a $\lie g[t]$--module and $z\in\mathbb{C}$. We define a new $\lie g[t]$--module action on $V$ as follows: $$(x\otimes t^r)v=(x\otimes (t+z)^r)v,\ \ x\in \lie g,\ \  r\in\mathbb{Z}_+,\ \ v\in V.$$ We denote this new module by $V^z$.
 Let  $V_1,\dots, V_m$ be  finite--dimensional graded $\lie g[t]$--modules generated by elements $v_j$, $1\le j\le m$ and  let  $z_1,\dots,z_m$ be   distinct complex numbers. Let  $$\bv= V_1^{z_1}\otimes\cdots\otimes V_m^{z_m},$$ be the corresponding tensor product of $\lie g[t]$--modules.
 It is easily checked (see \cite[Proposition 1.4]{FL}) that the module $\bv$ is cyclic $\lie g[t]$--module and generated by the element 
 $v_1\otimes \cdots\otimes v_m.$ 
 
 \subsection{} The corresponding associated graded $\lie g[t]$--module $\gr \bv$ is defined to be the fusion product of $V_1,\dots ,V_m$ with respect to the parameters $z_1,\dots, z_m$ and denoted by $V_1^{z_1}*\cdots *V_m^{z_m}$.
 Clearly $V_1^{z_1}*\cdots *V_m^{z_m}$ is generated by the image of $v_1\otimes\cdots\otimes v_m$.
In what follows next, for ease of notation, we suppress the dependence of the fusion product on the parameters and just write $V_1*\cdots*V_m$ instead $V_1^{z_1}*\cdots *V_m^{z_m}$. But unless explicitly stated, it should be assumed that the fusion product does depend on these parameters.
Given elements $w_s\in V_s$, $1\le s\le m$, we shall denote by $w_1*\cdots *w_m\in V_1*\cdots*V_m $ the image of the element $w_1\otimes\cdots \otimes w_m\in V_1^{z_1}\otimes\cdots\otimes V_m^{z_m}$.
Here, we record the following simple lemma (see \cite[Section 4.3]{chve}) for future use. \begin{lem}\label{fusionweyl} Let $\lambda_s\in P^+$ and  $V_s$ be a $\lie g[t]$--module quotient of $W_{\loc}(\lambda_s)$  for $1\le s\le m$.   Then $V_1*\cdots*V_m$ is a graded $\lie g[t]$--module quotient of $W_{\loc}(\lambda)$, where $\lambda=\sum_{s=1}^m\lambda_s$.\end{lem}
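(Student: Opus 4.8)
The plan is to exhibit the canonical generator of the fusion product as a cyclic vector satisfying the defining relations of $W_{\loc}(\lambda)$, and then to invoke the universal property of the local Weyl module. For $1\le s\le m$ write $v_s$ for the image of $w_{\lambda_s}$ in $V_s$, set $v=v_1\otimes\cdots\otimes v_m\in\bv=V_1^{z_1}\otimes\cdots\otimes V_m^{z_m}$, and let $\bar v\in V_1*\cdots*V_m=\gr\bv$ be its image. Recall that $\bv$ is generated by $v$ as a $\lie g[t]$--module, so $V_1*\cdots*V_m$ is generated by the homogeneous degree--$0$ element $\bar v$, and its degree--$r$ component equals $\bu(\lie g[t])[r]\bar v$. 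Hence it is enough to check that $\bar v$ satisfies \eqref{locweylg} and \eqref{integloc} (with $w_\lambda$ there replaced by $\bar v$); the universal property of $W_{\loc}(\lambda)$ then produces a surjective $\lie g[t]$--module map $W_{\loc}(\lambda)\twoheadrightarrow V_1*\cdots*V_m$, $w_\lambda\mapsto\bar v$, which is automatically graded because both generators sit in degree $0$.

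First I would verify the relations coming from $\lie b$. The coproduct formula for the $\lie g[t]$--action on a tensor product, together with $(x^+_i\otimes\mathbb{C}[t])v_s=0$ in every factor, gives $(x^+_i\otimes t^p)v=0$ in $\bv$ for all $p\in\mathbb{Z}_+$, and hence $(x^+_i\otimes t^p)\bar v=0$. For the Cartan relations, $(h_i\otimes1)$ acts diagonally, so $(h_i\otimes1)v=\big(\sum_s\lambda_s(h_i)\big)v=\lambda(h_i)v$; and for $p\ge1$ the operator $h_i\otimes t^p$ acts on the $s$--th factor as $h_i\otimes(t+z_s)^p$, so expanding and using $(h_i\otimes t^k)v_s=\lambda_s(h_i)\delta_{k,0}v_s$ shows $(h_i\otimes t^p)v=\big(\sum_s z_s^{\,p}\lambda_s(h_i)\big)v$, a scalar multiple of $v$. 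Thus $(h_i\otimes t^p)v\in F^0\bv\subseteq F^{p-1}\bv$ for $p\ge1$, and by the definition of the action on $\gr\bv$ (cf.\ \lemref{elemfusion}) its class in the degree--$p$ component is zero, i.e.\ $(h_i\otimes t^p)\bar v=0$. This establishes \eqref{locweylg} for $\bar v$.

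Next I would treat the integrability relation \eqref{integloc}. Each $V_s$ is finite dimensional, being a quotient of the finite dimensional module $W_{\loc}(\lambda_s)$, so $\bv$ is finite dimensional; moreover the twists $(\cdot)^{z_s}$ do not affect the action of $\lie g=\lie g\otimes1$. By the previous step $(x^+_i\otimes1)v=0$ and $(h_i\otimes1)v=\lambda(h_i)v$, so $v$ is a $\lie g$--highest weight vector of weight $\lambda$ generating a finite dimensional, hence integrable, quotient of the Verma module $M(\lambda)$; therefore $(x^-_i\otimes1)^{\lambda(h_i)+1}v=0$ in $\bv$. Since $x^-_i\otimes1$ has degree $0$ it preserves the filtration $\{F^r\bv\}_{r\in\mathbb{Z}_+}$, so this identity passes to $\bar v$ in $\gr\bv$, which gives \eqref{integloc} and completes the argument.

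I do not anticipate a real obstacle; the only point needing care is the filtration bookkeeping in the second step, namely the observation that $(x^+_i\otimes t^p)v$ and $(h_i\otimes t^p)v$ land in filtration degree strictly below $p$, so that they act as $0$ on the homogeneous vector $\bar v\in\gr\bv$. One could alternatively reduce to the case $m=2$ and induct on $m$, but the argument sketched above is already uniform in $m$.
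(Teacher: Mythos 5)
Your proof is correct and follows the same standard route as the argument the paper relies on (it simply cites \cite[Section 4.3]{chve}): one checks the defining relations \eqref{locweylg} and \eqref{integloc} on the degree--zero cyclic generator $v_1*\cdots*v_m$ of the fusion product, using that the twists do not affect the $\lie g\otimes 1$--action and that elements of positive $t$--degree applied to the top of the filtration drop into lower filtration steps, and then invokes the universal property of $W_{\loc}(\lambda)$. Your filtration bookkeeping and the $\lie{sl}_2$--integrability step are both accurate, so there is nothing to add.
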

%\begin{pf} Let $v_s$ be the image of $w_{\lambda_s}$ in $V_s$. It suffices to prove that  $v_1*\cdots *v_m$ satisfies the defining relations of $w_\lambda$. Note that for $r>0$ and $h\in\lie h$, we have $$(h\otimes t^r)(v_1\otimes \cdots \otimes v_m)= \left(\sum_{s=1}^mz_s^r\lambda_s(h)\right)(v_1\otimes \cdots \otimes  v_m)\in\bu(\lie g)(v_1\otimes\cdots\otimes v_m).$$ Hence if $r>0$, we  see that the image of $(h\otimes t^r)(v_1\otimes \cdots \otimes v_m)$ in the fusion product is zero. The other defining relations of $W_{\loc}(\lambda)$ are trivially satisfied and the proof is complete.
%\end{pf}

\subsection{}   
We recall the definition of $\mathcal{P^+_\ell}=\{\lambda\in P^+: \ell \Lambda_0+\lambda\in \ph^+\}$.
Now it is convenient to define a subset $\Gamma$ of $P^+$ as follows: $$\Gamma=\left\{\lambda\in P^+:\ \   \lambda=\sum_{i\in I}d_is_i\omega_i\right\}$$
The following is the statement of our main theorem:
\begin{thm}\label{mapsdem} Let $\mu\in\Gamma$, $\lambda\in P^+$, $\ell\in \mathbb{N}$ and  suppose that there exists $\mu_j\in\Gamma$, $p_j\in \mathbb{N}$, $1\le j\le m$ 
such that $$\ell\mu=p_1\mu_1+\cdots+p_m\mu_m, \qquad \mu(h_\alpha)\ge\sum_{j=1}^m\mu_j(h_\alpha),\ \ \alpha\in R^+.$$ 
There exists a non--zero surjective map of graded $\lie g[t]$--modules, $$D(\ell, \ell\mu+\lambda)\to D(p_1, p_1\mu_1)*\cdots*D(p_m,p_m\mu_m)*D(\ell,\lambda)\to 0,$$
Further more if we assume that $\lambda\in \mathcal{P^+_\ell}$ 
and $p_1=\cdots=p_m=\ell$ then we have an isomorphism 
$$D(\ell,\ell \mu+\lambda)\cong D(\ell, \ell \mu_1)*\cdots*D(\ell, \ell \mu_m)*D(\ell,\lambda)\cong D(\ell,\ell \mu_1)*\cdots*D(\ell, \ell \mu_m)*V(\lambda)$$
and, in particular, we have an isomorphism of $\lie g[t]$--modules
$D(\ell, N\ell\Theta+\lambda)\cong D(\ell, \ell\Theta)^{\ast N}\ast V(\lambda)$.
 \end{thm}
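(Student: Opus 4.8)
The plan is to prove Theorem~\ref{mapsdem} in two stages: first establish the surjection, then upgrade it to an isomorphism under the extra hypotheses by a dimension count. For the surjection, the strategy is to exhibit the fusion product $D(p_1,p_1\mu_1)*\cdots*D(p_m,p_m\mu_m)*D(\ell,\lambda)$ as a quotient of $D(\ell,\ell\mu+\lambda)$. First I would invoke \lemref{fusionweyl}: since each factor is a quotient of the appropriate local Weyl module and $\ell\mu+\lambda = (p_1\mu_1+\cdots+p_m\mu_m)+\lambda$, the fusion product is a graded quotient of $W_{\loc}(\ell\mu+\lambda)$; let $v$ denote the image of the cyclic generator. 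It then suffices to check that $v$ satisfies the defining relations \er{demrel00} of $D(\ell,\ell\mu+\lambda)$ given in \thmref{genreldem1}. By \lemref{dalpha}, since $\mu,\mu_j\in\Gamma$, for each $\alpha\in R^+$ we may write $\mu(h_\alpha)=d_\alpha a_\alpha$, $\mu_j(h_\alpha)=d_\alpha a_{j,\alpha}$ with $a_\alpha,a_{j,\alpha}\in\mathbb{Z}_+$, so $(\ell\mu+\lambda)(h_\alpha)=d_\alpha\ell a_\alpha + \lambda(h_\alpha)$; combined with $\mu(h_\alpha)\ge\sum_j\mu_j(h_\alpha)$, i.e. $a_\alpha\ge\sum_j a_{j,\alpha}$, one computes the parameters $s_\alpha,m_\alpha$ for $\ell\mu+\lambda$ explicitly in terms of the corresponding data for each factor.

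The heart of the argument is the relation-checking, and this is where I expect the main obstacle to lie: one must verify that for each $\alpha\in R^+$ the element $(x^-_\alpha\otimes t^{s_\alpha})v = 0$ and $(x^-_\alpha\otimes t^{s_\alpha-1})^{m_\alpha+1}v=0$ in the fusion product. The tool is \lemref{elemfusion} (together with \lemref{fusionweyl}'s description), which lets one replace $x^-_\alpha\otimes t^r$ acting on the image $\bar u$ by $x^-_\alpha\otimes(t-z_1)\cdots$ up to lower-filtration terms — i.e. one works inside the tensor product $D(p_1,p_1\mu_1)^{z_1}\otimes\cdots\otimes D(\ell,\lambda)^{z_{m+1}}$ and uses the coproduct. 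Here one exploits that in each tensor factor $D(p_j,p_j\mu_j)$ the generator is annihilated by $x^-_\alpha\otimes t^{s^{(j)}_\alpha}$ where $s^{(j)}_\alpha$ is $0$ or $1$ (because $\mu_j\in\Gamma$ forces $p_j\mu_j$ to have all its relations in low degree, similar to the Corollary after \thmref{genreldem1}), and that $D(\ell,\lambda)$ likewise has small $s_\alpha$ when $\lambda$ is small; then a Vandermonde / choice-of-parameters argument on the $z_j$, exactly of the type used in \cite{FoL} and \cite{chve}, shows the top-degree product kills $v$. This step is essentially the simply-laced, $\lambda$-twisted refinement of \cite[Theorem C]{FoL} and I would follow their bookkeeping closely, being careful that the inequality $\mu(h_\alpha)\ge\sum_j\mu_j(h_\alpha)$ is exactly what makes the degree count work out.

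For the isomorphism statement, assume now $\lambda\in\mathcal{P^+_\ell}$ and $p_1=\cdots=p_m=\ell$. By the Corollary to \thmref{genreldem1} we have $D(\ell,\lambda)\cong\ev_0 V(\lambda)$, which gives the second isomorphism in the displayed formula for free (and in particular $D(\ell,\lambda)$ and $V(\lambda)$ are interchangeable as fusion factors). It remains to show the surjection $D(\ell,\ell\mu+\lambda)\to D(\ell,\ell\mu_1)*\cdots*D(\ell,\ell\mu_m)*V(\lambda)$ is injective, for which — both sides being finite-dimensional — it is enough to compare dimensions. Since fusion products preserve dimension (the underlying $\lie g$-module of a fusion product is the tensor product of the factors), $\dim$ of the right side is $\prod_j\dim D(\ell,\ell\mu_j)\cdot\dim V(\lambda)$. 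So the task reduces to the character/dimension identity
\begin{equation*}
\dim D(\ell,\ell\mu+\lambda) \;=\; \Big(\prod_{j=1}^m \dim D(\ell,\ell\mu_j)\Big)\cdot\dim V(\lambda),
\end{equation*}
which I would prove in \secref{affine} using Demazure operators: the $\lie g$-stable Demazure module $D(\ell,\ell\nu)$ for $\nu\in\Gamma$ has character obtained by applying a product of Demazure operators to $e^{\ell\Lambda_0+\ell\nu}$ and reducing mod $\delta$, and the multiplicativity $\mathrm{ch}\,D(\ell,\ell\mu+\lambda)=\mathrm{ch}\,D(\ell,\ell\mu)\cdot\mathrm{ch}\,V(\lambda)$ together with $\mathrm{ch}\,D(\ell,\ell(\mu'+\mu''))=\mathrm{ch}\,D(\ell,\ell\mu')\cdot\mathrm{ch}\,D(\ell,\ell\mu'')$ follows from properties of Demazure operators and the fact that the relevant affine Weyl group elements multiply in length-additive fashion (this is the mechanism already present in \cite{FoL}). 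Finally, the special case $\mu_1=\cdots=\mu_m=\Theta$, $\ell\mu=N\ell\Theta$ with $\mu=N\Theta\in\Gamma$ (note $\Theta\in\Gamma$ since $\Theta$ is a sum of simple roots and $\Gamma\supseteq Q^+$) and the inequality $N\Theta(h_\alpha)\ge N\,\Theta(h_\alpha)$ trivially satisfied, gives $D(\ell,N\ell\Theta+\lambda)\cong D(\ell,\ell\Theta)^{*N}*V(\lambda)$ as claimed.
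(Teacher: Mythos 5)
Your overall route is the same as the paper's: realize the fusion product as a graded quotient of $W_{\loc}(\ell\mu+\lambda)$ via Lemma \ref{fusionweyl}, check the two families of relations of Theorem \ref{genreldem1} by computing in the tensor product $D(p_1,p_1\mu_1)^{z_1}\otimes\cdots\otimes D(\ell,\lambda)^{z_{m+1}}$ and passing to the associated graded, and then obtain the isomorphism from $D(\ell,\lambda)\cong\ev_0V(\lambda)$ together with a dimension count proved by Demazure operators and length-additivity (this is exactly Proposition \ref{demprop} and Lemma \ref{length}). However, the relation-checking step, which you yourself call the heart of the argument, is described on a false premise. For $\mu_j\in\Gamma$ the generator $v_j$ of $D(p_j,p_j\mu_j)$ is annihilated by $x^-_\alpha\otimes t^{b}$ exactly for $b\ge s_{\alpha,j}$ where $s_{\alpha,j}=\mu_j(h_\alpha)/d_\alpha$ (since $p_j\mu_j(h_\alpha)=(s_{\alpha,j}-1)d_\alpha p_j+d_\alpha p_j$ in the notation of Theorem \ref{genreldem1}); this exponent is unbounded, not ``$0$ or $1$'': the Corollary you invoke requires $\mu_j(h_\Theta)\le p_j^{-1}\cdot p_j\mu_j(h_\Theta)\le 1$, which $\mu_j\in\Gamma$ does not give. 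Similarly, in the surjectivity statement $\lambda\in P^+$ is arbitrary, so $D(\ell,\lambda)$ need not have relations ``in low degree'' either. The correct mechanism, which the paper carries out, is: write $\lambda(h_\alpha)=(r_\alpha-1)d_\alpha\ell+m_\alpha$, set $b_\alpha=s_\alpha-\sum_j s_{\alpha,j}\ge 0$ (this is precisely where $\mu(h_\alpha)\ge\sum_j\mu_j(h_\alpha)$ enters), act on $v_1\otimes\cdots\otimes v_{m+1}$ with $x^-_\alpha\otimes t^{b_\alpha}(t-z_1)^{s_{\alpha,1}}\cdots(t-z_m)^{s_{\alpha,m}}(t-z_{m+1})^{r_\alpha}$, kill every coproduct term using the relations just quoted, and conclude by Lemma \ref{elemfusion} since this polynomial is monic of degree $s_\alpha+r_\alpha$. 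No Vandermonde or choice-of-parameters argument occurs; the map exists for every choice of distinct $z_j$.

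You also do not address how the second family $(x^-_\alpha\otimes t^{s_\alpha+r_\alpha-1})^{m_\alpha+1}$ is verified: there one uses the same polynomial with exponent $r_\alpha-1$ on $(t-z_{m+1})$, raised to the power $m_\alpha+1$; since a single application already annihilates the first $m$ slots, only the term acting entirely on the $D(\ell,\lambda)$-factor survives, and one then needs $(x^-_\alpha\otimes t^{b+r_\alpha-1})^{m_\alpha+1}w_\lambda=0$ for all $b\ge 0$ in $D(\ell,\lambda)$ (which follows from the Demazure relations \er{demrelo}). Your dimension-count half and the deduction of the final statement are essentially right and match the paper, with one small slip: the inclusion $\Gamma\supseteq Q^+\cap P^+$ is false in general (it fails in type $G_2$); what is true, and all that is needed, is $\Theta\in\Gamma$, since $\Theta(h_i)=d_i\,\alpha_i(h_\Theta)$ with $\alpha_i(h_\Theta)\in\mathbb{Z}_+$ for all $i$.
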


\begin{cor} The fusion product of a finite number of modules of the form $D(\ell,\ell \mu)$, $\mu\in\Gamma$ for a fixed $\ell$, and $V(\lambda)$ is independent of the choice of parameters if $\lambda\in \mathcal{P^+_\ell}$. \end{cor}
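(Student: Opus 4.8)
The plan is to prove \thmref{mapsdem} in two stages: first construct the surjection, then upgrade it to an isomorphism under the stronger hypotheses by a dimension count.

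\textbf{Step 1: Constructing the surjection.} First I would observe that by \lemref{fusionweyl}, the fusion product $D(p_1,p_1\mu_1)*\cdots*D(p_m,p_m\mu_m)*D(\ell,\lambda)$ is a graded quotient of $W_{\loc}(\ell\mu+\lambda)$, since $\ell\mu+\lambda = \sum_j p_j\mu_j + \lambda$ and each factor is a quotient of the appropriate local Weyl module. Write $v$ for the cyclic generator (the image of the tensor product of highest weight vectors). To get the surjection from $D(\ell,\ell\mu+\lambda)$, by \thmref{genreldem1} it suffices to check that $v$ satisfies the relations \er{demrel00} for the weight $\ell\mu+\lambda$ and level $\ell$: that is, $(x^-_\alpha\otimes t^{s_\alpha})v = 0$ and $(x^-_\alpha\otimes t^{s_\alpha-1})^{m_\alpha+1}v=0$, where $s_\alpha,m_\alpha$ are computed from $(\ell\mu+\lambda)(h_\alpha)$. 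The key computational input is \lemref{elemfusion}: acting by $x^-_\alpha\otimes t^{s}$ on the image of a tensor product is the same as acting by $x^-_\alpha$ tensored with a polynomial of degree $s$ that vanishes to prescribed orders at $z_1,\dots,z_m,z$. I would choose such a polynomial to be a product of powers $(t-z_j)^{a_j}(t-z)^{b}$ with $a_j$ and $b$ chosen so that on each tensor factor the relevant Garland-type / Demazure relation \er{demrelo} (in the divided-power form, valid on each $D(p_j,p_j\mu_j)$, resp.\ $D(\ell,\lambda)$) forces the action to vanish; the inequality $\mu(h_\alpha)\ge\sum_j\mu_j(h_\alpha)$ together with $\ell\mu(h_\alpha)=\sum_j p_j\mu_j(h_\alpha)$ is exactly what makes the arithmetic of exponents work out, guaranteeing $s_\alpha$ is large enough and $m_\alpha$ small enough relative to the factor-wise data. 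This is the technical heart and I expect it to be the main obstacle — it is essentially the argument of \cite{FoL} for $\lambda=0$ extended to carry the extra $D(\ell,\lambda)$ (or $V(\lambda)$) factor, so the bookkeeping with the $\mu_j$ and $p_j$ must be done carefully, and one should verify $\mu\in\Gamma$ is used (via \lemref{dalpha}) to control the $d_\alpha$-divisibility of $\mu(h_\alpha)$.

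\textbf{Step 2: From surjection to isomorphism.} Assume now $\lambda\in\mathcal{P^+_\ell}$ and $p_1=\cdots=p_m=\ell$. The Corollary to \thmref{genreldem1} gives $D(\ell,\lambda)\cong\ev_0 V(\lambda)$, which identifies the last two expressions in the claimed chain of isomorphisms, so it remains to show the surjection of Step 1 is injective, i.e.\ that both sides have equal dimension. The standard strategy is: the fusion product, being a flat degeneration, satisfies $\dim\bigl(D(\ell,\ell\mu_1)*\cdots*D(\ell,\ell\mu_m)*V(\lambda)\bigr) = \prod_j \dim D(\ell,\ell\mu_j)\cdot\dim V(\lambda)$, while a surjection can only decrease dimension; so it suffices to prove the reverse inequality $\dim D(\ell,\ell\mu+\lambda)\le \prod_j\dim D(\ell,\ell\mu_j)\cdot\dim V(\lambda)$, or better, an exact product formula for dimensions of these particular Demazure modules. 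For $\lie g$-stable Demazure modules $D(\ell,\ell\mu)$ with $\mu\in\Gamma$ this kind of multiplicativity is known (Demazure character formula / the tensor-product-like factorization used in \cite{FoL}); the extra factor $V(\lambda)$ with $\lambda\in\mathcal{P^+_\ell}$ contributes multiplicatively because $\ell\Lambda_0+\ell\mu+\lambda$ and $\ell\Lambda_0+\ell\mu$ differ by the ``small'' weight $\lambda$. Concretely I would invoke (or defer to \secref{affine}, where Demazure operators are used) the character computation showing $\charc D(\ell,\ell\mu+\lambda)$ equals the product of $\charc D(\ell,\ell\mu_j)$ and $\charc V(\lambda)$ after specialization, and then equality of dimensions forces the surjection to be an isomorphism.

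\textbf{Step 3: The iterated/special case.} Finally, taking $m=N$, all $\mu_j=\Theta$, and $\mu=N\Theta$ (note $\Theta\in\Gamma$ for $\lie g$ simply laced, and $N\Theta(h_\alpha)\ge N\Theta(h_\alpha)$ trivially, while the hypothesis $\mu(h_\alpha)\ge\sum_j\mu_j(h_\alpha)$ reads $N\Theta(h_\alpha)\ge N\Theta(h_\alpha)$), the isomorphism specializes to $D(\ell,N\ell\Theta+\lambda)\cong D(\ell,\ell\Theta)^{*N}*V(\lambda)$, which is the last assertion; associativity and independence-of-parameters of the fusion product in this situation is exactly what the Corollary records, and follows from the isomorphism just proved since the left-hand side manifestly does not depend on $z_1,\dots,z_m,z$.
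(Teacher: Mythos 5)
Your proposal is correct and follows essentially the same route as the paper: the surjection is built exactly as in the proof of \thmref{mapsdem} (via \lemref{fusionweyl}, \lemref{elemfusion}, \lemref{dalpha} and the relations of \thmref{genreldem1}), the isomorphism comes from the same character/dimension factorization (the paper's Proposition on the $\lie g$--module structure, proved with Demazure operators in \secref{affine}), and the corollary is then read off because the Demazure module on the left is manifestly parameter--free. No substantive differences to flag.
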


\begin{rem} The special case when $\lambda=0$ of the theorem (second statement) and the corollary was proved earlier in \cite[Section 3.5]{FoL} using results of \cite{FoL2}.
We note here, that these papers work with a special family of Demazure modules $D(\ell,\lambda)$. In  the notation of the current paper, they only work with the modules of the form  $D(\ell,\ell\mu)$, $\mu\in \Gamma$.
We remark here that the condition $\ell\Lambda_0+\lambda\in \ph^+$ is equivalent to $\lambda(h_\Theta)\le \ell$.
Thus we have infinite family of such examples.
\end{rem}

 \vskip 6pt
\subsection{} We need some results on the $\lie g$--structure of Demazure modules to prove the second statement of our main theorem \ref{mapsdem}.
We state the required proposition here and defer its proof to section \ref{affine}. Its proof uses ideas that evolved in \cite[Section 3]{FoL2} to prove similar results.

\begin{prop}\label{demprop} Let $\ell\in \mathbb{N}$, $\mu\in\Gamma$ and $\lambda\in \mathcal{P^+_\ell}$. Suppose  that there exists $\mu_j\in\Gamma$, $1\le j\le m$, 
such that $\mu=\mu_1+\cdots+\mu_m, $ then we have
\begin{enumerit}
 \item[(i)] $D(\ell,\ell\mu+\lambda)\cong D(\ell, \ell \mu_1)\otimes \cdots \otimes D(\ell, \ell \mu_m)\otimes D(\ell,\lambda)$ as $\lie g$--modules,
 
 \item[(ii)] in particular, $\dim D(\ell,\ell\mu+\lambda)= \dim D(\ell, \ell \mu_1)\cdots \dim D(\ell, \ell \mu_m) \dim D(\ell,\lambda).$\hfill\qedsymbol

\vskip 6pt

\end{enumerit}

\end{prop}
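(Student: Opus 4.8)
The plan is as follows. Part (ii) is immediate from part (i) by taking dimensions, since the dimension of a $\lie g$--module is the image of its character under the ring homomorphism $\mathbb{Z}[P]\to\mathbb{Z}$, $e^\nu\mapsto 1$. For part (i), every module occurring is a finite--dimensional, hence semisimple, $\lie g$--module, so it suffices to prove the identity of $\lie g$--characters
\[
\ch D(\ell,\ell\mu+\lambda)=\ch D(\ell,\ell\mu_1)\cdots \ch D(\ell,\ell\mu_m)\cdot \ch V(\lambda),
\]
where I have used the Corollary to \thmref{genreldem1} to write $\ch D(\ell,\lambda)=\ch V(\lambda)$, valid since $\lambda\in\mathcal{P}^+_\ell$. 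As $\Gamma$ is closed under addition, an induction on $m$ reduces this to the single \emph{splitting identity}
\[
\ch D(\ell,\ell\nu+\eta)=\ch D(\ell,\ell\nu)\cdot \ch D(\ell,\eta),\qquad \nu\in\Gamma,\ \ \eta=\ell\rho+\kappa,\ \ \rho\in\Gamma,\ \ \kappa\in\mathcal{P}^+_\ell,
\]
used together with $\ch D(\ell,\kappa)=\ch V(\kappa)$ at the last step.

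To prove the splitting identity I would use the affine Demazure character formula: $D(\ell,\xi)=V_w(\Lambda)$ for the $\Lambda\in\ph^+$ with $w_0\xi+\ell\Lambda_0\in\widehat{W}\Lambda\pmod{\mathbb{C}\delta}$, and the character of $V_w(\Lambda)$ in $\mathbb{Z}[\ph]$ is obtained by applying, along any reduced word for $w$, the Demazure operators $\mathcal{D}_i(f)=\bigl(f-e^{-\alpha_i}s_if\bigr)/\bigl(1-e^{-\alpha_i}\bigr)$ to an extremal weight monomial. Since $\lambda\in\mathcal{P}^+_\ell$, the weight $\ell\Lambda_0+\lambda$ is dominant, and by the translation formula of \secref{preliminaries} one has $\ell\mu+\lambda+\ell\Lambda_0\equiv t_\mu(\ell\Lambda_0+\lambda)\pmod{\mathbb{C}\delta}$; so, after absorbing the relevant element of $\Sigma$ and the twist by $w_0$ into the choice of extremal weight (neither of which affects the resulting $\lie g$--character), the composite of Demazure operators computing $\ch D(\ell,\ell\mu+\lambda)$ is the one attached to the translation $t_\mu$. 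For $\nu,\rho\in\Gamma$ --- hence $\nu,\rho$ dominant --- the length function on $\widetilde{W}$ is additive, $t_{\nu+\rho}=t_\nu t_\rho$ is a length--additive product, and therefore $\mathcal{D}_{t_{\nu+\rho}}=\mathcal{D}_{t_\nu}\circ\mathcal{D}_{t_\rho}$. Applying $\mathcal{D}_{t_\rho}$ first produces $\ch_{\mathbb{Z}[\ph]}D(\ell,\ell\rho+\kappa)=\ch_{\mathbb{Z}[\ph]}D(\ell,\eta)=e^{\ell\Lambda_0}\cdot\ch D(\ell,\eta)$, so the splitting identity becomes the multiplicativity
\[
\mathcal{D}_{t_\nu}\bigl(e^{\ell\Lambda_0}f\bigr)=\mathcal{D}_{t_\nu}\bigl(e^{\ell\Lambda_0}\bigr)\cdot f,\qquad \nu\in\Gamma,
\]
for $f=\ch D(\ell,\eta)$, since $\mathcal{D}_{t_\nu}(e^{\ell\Lambda_0})=e^{\ell\Lambda_0}\ch D(\ell,\ell\nu)$ by the case $\rho=\kappa=0$.

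I expect this multiplicativity to be the main obstacle, and it is precisely here that I would adapt \cite[Section~3]{FoL2}, which handles the case $\eta\in\ell\Gamma$. The difficulty is that $f$ is only $W$--invariant, not $\widehat{W}$--invariant: it commutes past the finite Demazure operators $\mathcal{D}_i$, $i\in I$, but not a priori past $\mathcal{D}_0$ (recall $\alpha_0=\delta-\Theta$, $h_0=K-h_\Theta$, so $\langle\ell\Lambda_0+\beta,h_0\rangle=\ell-\beta(h_\Theta)$ for $\beta\in\lie h^*$). One must exploit $\nu\in\Gamma$ together with the bound $\kappa(h_\Theta)\le\ell$ coming from $\kappa\in\mathcal{P}^+_\ell$ to control the $h_\Theta$--weights of all monomials that appear, so that the passage to the level--$\ell$ adapted Demazure operators on $\mathbb{Z}[P]$ used in \cite{FoL2} is legitimate and genuinely multiplicative against $W$--invariant characters. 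The new feature relative to \cite{FoL2} is the extra factor $\ch \ev_0 V(\kappa)=\ch V(\kappa)$, which is $W$--invariant and supported in $\kappa-Q^+$; the condition $\kappa\in\mathcal{P}^+_\ell$ is exactly what makes the required estimates hold. The length additivity and the two reductions above are routine by comparison.
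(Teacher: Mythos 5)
Your overall strategy---reduce to a character identity and compute with affine Demazure operators, using length-additivity of translations and the multiplicativity results of \cite[Section 3.1]{FoL2}---is the same as the paper's, but the pivotal reduction in your argument is incorrect. The $\lie g$--stable Demazure module $D(\ell,\ell\mu+\lambda)$ is $V_w(\ell\Lambda_0+\lambda)$ for $w=t_{w_0\mu}w_0=w_0t_\mu$, i.e.\ it is attached to the extremal weight $\ell\Lambda_0+w_0(\ell\mu+\lambda)$ (anti-dominant finite part), not to the translation $t_\mu$. The twist by $w_0$ cannot be ``absorbed into the choice of extremal weight'': the Demazure submodule attached to the extremal weight $t_\mu(\ell\Lambda_0+\lambda)=\ell\Lambda_0+\ell\mu+\lambda$ is a strictly smaller, non--$\lie g$--stable module, and its character $D_{t_\mu}(e^{\ell\Lambda_0+\lambda})$ is not even $W$--invariant. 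Concretely, for $\lie g=\lie{sl}_2$, $\lambda=0$, $\mu=\alpha$, one has $D_{t_\alpha}(e^{\ell\Lambda_0})=e^{\ell\Lambda_0}(1+e^{\alpha}+\cdots+e^{\ell\alpha})$ modulo $\delta$, of dimension $\ell+1$, whereas already for $\ell=1$ the module $D(1,\alpha)\cong V(0)\oplus V(\alpha)$ has dimension $4$. Hence your intermediate claim that ``applying $\mathcal D_{t_\rho}$ first produces $e^{\ell\Lambda_0}\ch D(\ell,\eta)$'' fails, and the induction built on your splitting identity collapses.

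The omitted factor $D_{w_0}$ is precisely what makes the argument work, and it is where $\lambda\in\mathcal{P}^+_\ell$ enters. The paper writes $D(\ell,\ell\mu+\lambda)=V_{t_{w_0\mu}w_0}(\ell\Lambda_0+\lambda)$, proves $\ell(t_{w_0\mu}w_0)=\ell(t_{w_0\mu})+\ell(w_0)$ (Lemma \ref{length}), so that $D_{t_{w_0\mu}w_0}=D_{t_{w_0\mu}}D_{w_0}$, and applies $D_{w_0}$ first: since $\ell\Lambda_0+\lambda$ is dominant, $D_{w_0}(e^{\ell\Lambda_0+\lambda})=e^{\ell\Lambda_0}\,\ch V(\lambda)$ by the Demazure character formula for the finite Weyl group. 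Only after this step does one have a level-zero, $W$--invariant factor; then the multiplicativity you single out as ``the main obstacle'' requires no new estimates at all---it is exactly Lemma 7 of \cite[Section 3.1]{FoL2} applied to $\ch V(\lambda)$---and Theorem 1' of loc.\ cit.\ expands $D_{t_{w_0\mu}}(e^{\ell\Lambda_0})$ as $e^{\ell\Lambda_0}\prod_{j}\ch\,\overline{V_{t_{w_0\mu_j}}(\ell\Lambda_0)}$ in one stroke, so no induction on $m$ is needed. Combined with $D(\ell,\lambda)\cong\ev_0V(\lambda)$ for $\lambda\in\mathcal{P}^+_\ell$, this yields (i); your derivation of (ii) from (i) by taking dimensions is fine.
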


\noindent{\em{Proof of Theorem \ref{mapsdem}.}} \ Let $v_s\in D(p_s,p_s\mu_s)$ be the image of the generator  $w_{\mu_s}$ of $W_{\loc}(p_s\mu_s)$ for $1\le s\le m$, and $v_{m+1}\in D(\ell,\lambda)$ the image of the generator
$w_{\lambda}$ of $W_{\loc}(\lambda)$. 
Using Lemma \ref{fusionweyl}, we see that there exists a surjective map  of graded $\lie g[t]$--modules,
 $$W_{\loc}(\ell \mu+\lambda)\to D(p_1,p_1\mu_1)*\cdots*D(p_m,p_m\mu_m)*D(\ell,\lambda)\to 0.$$
For each $\alpha\in R^+$, write $\lambda(h_\alpha)=(r_\alpha-1)d_\alpha \ell+m_\alpha$ with $m_\alpha\le d_\alpha \ell$ and $\mu(h_\alpha)=d_\alpha s_\alpha$ (use Lemma \ref{dalpha}).

Now using  Theorem \ref{genreldem1}, it suffices to show that,  
 \begin{gather}\label{demrel} (x_\alpha^-\otimes t^{s_\alpha+r_\alpha})(v_1*\cdots *v_m*v_{m+1})=0,\ \alpha\in R^+,\\
 (x^-_{\alpha}\otimes t^{s_{\alpha}+r_\alpha-1})^{m_{\alpha}+1}(v_1*\cdots *v_m*v_{m+1})=0,\ \alpha\in R^+, \ m_\alpha<d_\alpha\ell.\end{gather}

Write $\mu_j(\alpha)=d_\alpha s_{\alpha,j},\ \ 1\le j\le m,$ (use Lemma \ref{dalpha}) and note that we are given that $$s_\alpha\ge\sum_{j=1}^m s_{\alpha,j},\ \ \alpha\in R^+.$$
Setting    $b_\alpha=s_\alpha-\sum_j s_{\alpha,j}$ and taking $z_1,\dots,z_m, z_{m+1}$ be the parameters involved in the definition of the fusion product, we see that

\begin{gather*}(x^-_\alpha\otimes t^{b_\alpha}(t-z_1)^{s_{\alpha,1}}\cdots(t-z_m)^{s_{\alpha,m}}(t-z_{m+1})^{r_\alpha})(v_1\otimes\cdots \otimes v_m\otimes v_{m+1}) \\
={\sum_{r=1}^{m+1}v_1\otimes\cdots\otimes(x^-_\alpha\otimes (t+z_r)^{b_\alpha}(t-z_1+z_r)^{s_{\alpha,1}}\cdots t^{s_{\alpha,r}}\cdots (t+z_r-z_m)^{s_{\alpha,m}}(t+z_r-z_{m+1})^{r_\alpha}v_r)\otimes\cdots\otimes v_{m+1}}.\end{gather*} 
 Using Theorem \ref{genreldem1},  we see  that the relations $(x_\alpha^-\otimes t^b)v_r=0,\ \ b\ge s_{\alpha,r},$ hold in $D(p_r,\mu_r)$ for $1\le r\le m$, and
$(x_\alpha^-\otimes t^b)v_{m+1}=0, \ \ b\ge r_\alpha$, hold in $D(\ell,\lambda)$. 

\vskip 6pt
\noindent
Hence we have $(x^-_\alpha\otimes t^{b_\alpha}(t-z_1)^{s_{\alpha,1}}\cdots(t-z_m)^{s_{\alpha,m}}(t-z_{m+1})^{r_\alpha})(v_1\otimes \cdots \otimes v_m\otimes v_{m+1})=0. $
Now it follows that $$(x^-_\alpha\otimes t^{s_\alpha+r_\alpha})(v_1\otimes \cdots \otimes v_m\otimes v_{m+1})\in F^{s_\alpha+r_\alpha-1}(\bold V),$$
where $\bold V=D^{z_1}(\ell, \ell \mu_1)\otimes \cdots \otimes D^{z_m}(\ell, \ell \mu_m)\otimes D^{z_{m+1}}(\ell,\lambda).$ This implies
$$(x_\alpha^-\otimes t^{s_\alpha+r_\alpha})(v_1*\cdots*v_m*v_{m+1})=0$$
We now prove the second relation. The earlier calculation shows that,
\begin{gather*}
(x^-_\alpha\otimes t^{b_\alpha}(t-z_1)^{s_{\alpha,1}}\cdots(t-z_m)^{s_{\alpha,m}}(t-z_{m+1})^{r_\alpha-1})^{m_\alpha+1}(v_1\otimes \cdots \otimes v_m\otimes v_{m+1})=\\
v_1\otimes\cdots \otimes v_m\otimes (x^-_\alpha\otimes (t+z_{m+1})^{b_\alpha}(t-z_1+z_{m+1})^{s_{\alpha,1}}\cdots (t+z_{m+1}-z_m)^{s_{\alpha,m}}t^{r_\alpha-1})^{m_\alpha+1} v_{m+1}.
\end{gather*} 
But using Theorem \ref{genreldem1} again, we have the following relations in $D(\ell, \lambda)$ $$(x^-_\alpha\otimes t^{b+r_\alpha-1})^{m_\alpha+1}v_{m+1}=0, \ \text{for} \ b\ge 0.$$ 
which yields $(x^-_\alpha\otimes t^{b_\alpha}(t-z_1)^{s_{\alpha,1}}\cdots(t-z_m)^{s_{\alpha,m}}(t-z_{m+1})^{r_\alpha-1})^{m_\alpha+1}(v_1\otimes \cdots \otimes v_m\otimes v_{m+1})=0.$
Again by similar argument, we have
$$(x_\alpha^-\otimes t^{s_\alpha+r_\alpha-1})^{m_\alpha+1}(v_1*\cdots*v_m*v_{m+1})=0.$$ 
This proves the existence of the surjective map $$D(\ell, \ell \mu+\lambda)\to D(p_1, p_1\mu_1)*\cdots*D(p_m,p_m\mu_m)*D(\ell,\lambda)\to 0.$$
Now the second and third statements of the theorem are immediate from the Proposition \ref{demprop}, since the dimension of the corresponding modules matches.
\hfill\qedsymbol

%%%%%%%%%%%%%%%%%%%%%%%%%%%%%%%%%%%%%%%%%%%%%%%%%%%%%%%%%%%%%%%%%%%%%%%%%%%%%%%%%%%%%%%%%%%%%%%%%%%%%%%%%%%%%%%%%%%%%%%%%%%%%%%%%%%%%%%%%%%%%%%%%%%%%%%%%%%%%%%%%%%%%%%%%%%%%%%%%%%%

\section{Proof of Proposition \ref{demprop} and Limit constructions }\label{affine}
The first part of this section occupies the proof of Proposition \ref{demprop}. The proof that we present here is a slight generalization of Theorem 1 and Theorem 1 A in [\cite{FoL2}, Section 3.1]. We also remark that special cases of our proposition was already established in \cite{FoL2}. 
In the second part of this section, we reconstruct the $\lie g[t]$--module structure of the irreducible highest weight $\Lhg$--module $V(\ell\Lambda_0+\lambda)$ as a direct limit of fusion products of Demazure modules. This limit construction was conjectured by Fourier and Littelmann in \cite{FoL} and they proved the conjecture for the special case when $\lambda=0$.
\subsection{}
We begin by recalling the Demazure character formula from \cite[Chapter VIII]{skumar}. Denote $D_w$ the Demazure operator associated with an arbitrary element $w\in \widetilde{W}$. 
For $\Lambda\in \ph^+$, $\sigma \in \Sigma$ and $w\in \widehat{W}$ we have
$$\charc_{\Lhh}\ V_{w\sigma}(\Lambda)=\charc_{\Lhh}\ V_w(\sigma(\Lambda))=D_{w}(e(\sigma\Lambda))=D_{w\sigma}(e(\Lambda))$$ 
We note here that we are only interested in $\lie g$--module structure of Demazure modules, and so in particular that we are interested only in $\lie h$--characters and hence it is enough to calculate the Demazure characters modulo $\delta$. 
We use a few results of \cite{FoL2} to calculate Demazure characters (modulo $\delta$) in what follows next, that allows us to conclude our proposition. We refer the readers to \cite{FoL2} for more details.
The following is simple:
\begin{lem}\label{length}
For $\mu\in \Gamma$, we have
$\ell(t_{w_0\mu}w_0)=\ell(t_{w_0\mu})+\ell(w_0)$, where $\ell(-)$ denotes the extended length function of $\widetilde{W}$. Hence, $D_{t_{w_0\mu}w_0}=D_{t_{w_0\mu}}D_{w_0}$.
\end{lem}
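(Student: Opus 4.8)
The plan is to reduce the length identity to the classical fact that translation by an antidominant element is length--additive against the finite Weyl group, and then to read off the factorization of Demazure operators from their standard behaviour on length--additive products.

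First I would record the reductions. Since $\mu\in\Gamma\subseteq P^+$, the weight $w_0\mu$ is antidominant for $W$: for $\alpha\in R^+$ one has $w_0\alpha\in R^-$ and hence $\langle w_0\mu,\alpha^\vee\rangle=\langle\mu,(w_0\alpha)^\vee\rangle\le 0$ by dominance of $\mu$. Moreover $\mu\in\Gamma$ says exactly that $\mu$ is dominant and lies in $\bigoplus_{i\in I}\mathbb{Z}d_i\omega_i$, the lattice by which $\widetilde{W}$ translates, so $t_{w_0\mu}=w_0t_\mu w_0^{-1}$ genuinely lies in $\widetilde{W}$ and the statement makes sense. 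The identity $\ell(t_{w_0\mu}w_0)=\ell(t_{w_0\mu})+\ell(w_0)$ is then an instance of the classical statement that $\ell(t_\nu w)=\ell(t_\nu)+\ell(w)$ for every antidominant $\nu$ and every $w\in W$.

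To make this self--contained I would use the standard length formula for an element $t_\nu w\in\widetilde{W}$ with $w\in W$,
$$\ell(t_\nu w)=\sum_{\substack{\alpha\in R^+\\ w^{-1}\alpha\in R^+}}\bigl|\langle\nu,\alpha^\vee\rangle\bigr|+\sum_{\substack{\alpha\in R^+\\ w^{-1}\alpha\in R^-}}\bigl|\langle\nu,\alpha^\vee\rangle-1\bigr|,$$
noting that for a pure translation this gives $\ell(t_\nu)=\sum_{\alpha\in R^+}|\langle\nu,\alpha^\vee\rangle|$, a quantity manifestly invariant under replacing $\nu$ by $w\nu$. Taking $\nu=w_0\mu$ and $w=w_0$: every $\alpha\in R^+$ has $w_0^{-1}\alpha=w_0\alpha\in R^-$, so only the second sum occurs, and since each $\langle w_0\mu,\alpha^\vee\rangle\le 0$ all the absolute values open with the same sign, giving
$$\ell(t_{w_0\mu}w_0)=\sum_{\alpha\in R^+}\bigl(1-\langle w_0\mu,\alpha^\vee\rangle\bigr)=|R^+|-\sum_{\alpha\in R^+}\langle w_0\mu,\alpha^\vee\rangle.$$
Because $w_0$ maps $R^+$ bijectively onto $R^-$, $\sum_{\alpha\in R^+}\langle w_0\mu,\alpha^\vee\rangle=-\sum_{\alpha\in R^+}\langle\mu,\alpha^\vee\rangle=-\ell(t_\mu)=-\ell(t_{w_0\mu})$, while $|R^+|=\ell(w_0)$; hence $\ell(t_{w_0\mu}w_0)=\ell(t_{w_0\mu})+\ell(w_0)$.

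Finally, writing $t_{w_0\mu}=\sigma u$ with $\sigma\in\Sigma$ and $u\in\widehat{W}$, we have $\ell(u)=\ell(t_{w_0\mu})$ and $\ell(uw_0)=\ell(u)+\ell(w_0)$ by the above, so $D_{uw_0}=D_uD_{w_0}$ by the standard multiplicativity of Demazure operators on length--additive products (see \cite[Chapter VIII]{skumar}); therefore $D_{t_{w_0\mu}w_0}=D_\sigma D_uD_{w_0}=D_{t_{w_0\mu}}D_{w_0}$, where $D_\sigma$ denotes the ring automorphism induced by $\sigma$. I expect no real obstacle; the only care needed is in fixing the conventions in the length formula (on which side the translation sits, and the sign inside $|\langle\nu,\alpha^\vee\rangle-1|$), after which antidominance of $w_0\mu$ forces all the absolute values to collapse uniformly.
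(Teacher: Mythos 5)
The paper offers no argument for this lemma (it is asserted as ``simple''), so there is nothing to compare line by line; your route --- the Iwahori--Matsumoto length formula for $\widetilde{W}$, antidominance of $w_0\mu$ collapsing all absolute values, $W$--invariance of $\ell(t_\nu)$, and then multiplicativity of Demazure operators on length--additive products with the diagram--automorphism part split off --- is exactly the standard way to make the assertion precise, and your reduction at the start ($t_{w_0\mu}w_0=w_0t_\mu$ together with the classical fact that $\ell(wt_\mu)=\ell(w)+\ell(t_\mu)$ for dominant $\mu$ and any $w\in W$) is presumably the ``simple'' fact the paper has in mind. The handling of the extended group via $t_{w_0\mu}=\sigma u$, $\sigma\in\Sigma$, $u\in\widehat{W}$, is also consistent with the paper's conventions for $D_{w\sigma}$.

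One normalization needs correcting, and it is exactly the point where $\mu\in\Gamma$ (rather than $\mu\in P^+$) enters. The translations in $\widetilde{W}$ are indexed by $L=\bigoplus_i\mathbb{Z}d_i\omega_i$, which under the normalized form corresponds to the coweight lattice; the integer attached to a positive root $\alpha$ in the length formula is therefore $(\nu,\alpha)=\nu(h_\alpha)/d_\alpha$, not $\langle\nu,\alpha^\vee\rangle=\nu(h_\alpha)$. These agree in the simply--laced case, but in non--simply--laced type your formula overcounts short--root contributions (e.g.\ it gives $3$ instead of $1$ for the simple affine reflection $t_\Theta s_\Theta$ in type $C_2$), so as written your computation establishes the additivity for the wrong functional; since Theorem \ref{mapsdem} and Proposition \ref{demprop} are stated for general $\lie g$, this matters. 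The fix is immediate and changes nothing structurally: replace $\langle\nu,\alpha^\vee\rangle$ by $(\nu,\alpha)=\nu(h_\alpha)/d_\alpha$ throughout; by Lemma \ref{dalpha} one has $(\mu,\alpha)=s_\alpha\in\mathbb{Z}_+$ for $\mu\in\Gamma$, antidominance of $w_0\mu$ still forces every term in the second sum to open as $1-(w_0\mu,\alpha)$, and the same bookkeeping yields $\ell(t_{w_0\mu}w_0)=|R^+|+\sum_{\alpha\in R^+}(\mu,\alpha)=\ell(w_0)+\ell(t_{w_0\mu})$, after which your Demazure--operator step is unchanged.
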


\subsection{Proof of Proposition \ref{demprop}}
%Observe that the condition on $\lambda$, $\lambda(h_\Theta)\le \ell$ is equivalent to $\ell \Lambda_0+\lambda\in \ph^+$. 
We have, by definition, $V_{t_{w_0\mu} w_0}(\ell \Lambda_0+\lambda)=D(\ell, \ell \mu+\lambda)$,
since $$t_{w_0\mu} w_0(\ell \Lambda_0+\lambda)=\ell \Lambda_0+w_0(\ell \mu+\lambda) \ \ \emph{mod} \ \mathbb{Z}\delta.$$
Since $V_{t_{w_0\nu}}(\ell \Lambda_0)=D(\ell, \ell \nu)$, by definition, for any $\nu\in \Gamma$, it is sufficient to prove the following:
$$D_{t_{w_0\mu}w_0}(e(\ell \Lambda_0+\lambda))=e({\ell \Lambda_0})\emph{char}_{\lie h}\overline{V_{t_{w_0\mu_1}}(\ell \Lambda_0)}\cdots \emph{char}_{\lie h}\overline{V_{t_{w_0\mu_m}}(\ell \Lambda_0)}\emph{char}_{\lie h} V(\lambda)$$
where we write $\overline{V_{t_{w_0\mu}}(\ell \Lambda_0)}$ for the Demazure module viewed as $\lie g$--module. 
Now using the Demazure operators and the Lemma \ref{length}, we get
\begin{gather*}
D_{t_{w_0\mu}w_0}(e(\ell \Lambda_0+\lambda))=D_{t_{w_0\mu}}D_{w_0}(e({\ell \Lambda_0+\lambda}))=D_{t_{w_0\mu}}(e({\ell \Lambda_0})\emph{char}_{\lie h}V(\lambda)).
\end{gather*}
Using the Lemma 7 in \cite[Section 3.1]{FoL2} we get
$D_{t_{w_0\mu}}(e({\ell \Lambda_0})\emph{char}_{\lie h}V(\lambda))=D_{t_{w_0\mu}}(e({\ell \Lambda_0}))\emph{char}_{\lie h}V(\lambda)$ and using the Theorem 1' in \cite[Section 3.1]{FoL2} we get
$$D_{t_{w_0\mu}}(e(\ell \Lambda_0))=e({\ell \Lambda_0})\emph{char}_{\lie h}\overline{V_{t_{w_0\mu_1}}(\ell \Lambda_0)}\cdots \emph{char}_{\lie h}\overline{V_{t_{w_0\mu_m}}(\ell \Lambda_0)}.$$
Putting all these together we get our desired result
$$D_{t_{w_0\mu}w_0}(e(\ell \Lambda_0+\lambda))=e({\ell \Lambda_0})\emph{char}_{\lie h}\overline{V_{t_{w_0\mu_1}}(\ell \Lambda_0)}\cdots \emph{char}_{\lie h}\overline{V_{t_{w_0\mu_m}}(\ell \Lambda_0)}\emph{char}_{\lie h}V(\lambda),$$
and hence $$D(\ell, \ell\mu+\lambda)\cong D(\ell, \ell \mu_1)\otimes \cdots \otimes D(\ell, \ell \mu_m)\otimes D(\ell,\lambda),$$ as $\lie{g}$--modules.  
This completes the proof of the proposition. 
\subsection{Limit constructions}\label{seclimitconstruction}

Fix a non--zero dominant weight $\lambda$ of $\lie g$ and $\ell \in \mathbb{N}$ such that $\lambda(h_\Theta)\le \ell$. In this subsection,
we give a proof for the limit construction of the irreducible highest weight $U(\Lhg)$--module $V(\ell \Lambda_0+\lambda)$
as conjectured in \cite{FoL}. Note that, in \cite{FoL2} same authors gave such a construction for $\overline{V(\ell\Lambda_0+\lambda)}$ as a
semi-infinite tensor product of finite dimensional $\lie g$--module.  %This construction was done in [\cite{FoL}, Section 4] for multiples of $\Lambda_0$ and conjectured for arbitrary dominant weights (see \cite[Conjecture 2, Section 4]{FoL}).
%Our proof is same as the proof that was given in [\cite{FoL}, Theorem 9]. But still, we repeat it here for the sake of completeness.
We first recall the statement of the semi--infinite fusion product construction:
\begin{thm}\label{semiinfconjecture} 
Let $D(\ell, N\ell\Theta+\lambda)\subset V(\Lambda)$ be the Demazure module of level $\ell$ corresponding to the element $t_{-N\Theta}w_0$ of $\widetilde{W}$. 
Let $w\neq0$ be a $\lie g[t]$--invariant vector of $D(\ell, \ell\Theta).$
Let $\bold{V}^\infty_{\ell, \lambda}$ be the direct limit of $$V(\lambda)\hookrightarrow D(\ell, \ell\Theta)*V(\lambda)\hookrightarrow D(\ell, \ell\Theta)*D(\ell, \ell\Theta)*V(\lambda)\hookrightarrow D(\ell, \ell\Theta)*D(\ell, \ell\Theta)*D(\ell, \ell\Theta)*V(\lambda)\hookrightarrow \cdots $$
where the inclusions are given by $v\mapsto w\otimes v.$ Then
$V(\Lambda) \ \text{and} \ \bold{V}^\infty_{\ell, \lambda}$
are isomorphic as $\lie g[t]$--modules.

\end{thm}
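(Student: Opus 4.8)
\medskip
\noindent\textbf{Proof sketch of Theorem \ref{semiinfconjecture}.}
The hypothesis $\lambda(h_\Theta)\le\ell$ is exactly the condition $\lambda\in\mathcal{P^+_\ell}$, so by Theorem \ref{mapsdem} (and the Corollary to Theorem \ref{genreldem1}) we have, for every $N\ge 0$, an isomorphism of graded $\lie g[t]$--modules
$$f_N\colon \bold V_N:=D(\ell,\ell\Theta)^{\ast N}\ast V(\lambda)\ \xrightarrow{\ \sim\ }\ D(\ell,N\ell\Theta+\lambda)$$
taking the cyclic generator $\eta_N:=v_{\ell\Theta}\ast\cdots\ast v_{\ell\Theta}\ast v_\lambda$ to the distinguished cyclic generator $\xi_N$ of $D(\ell,N\ell\Theta+\lambda)$. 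Using associativity of the fusion product, $\bold V_{N+1}=D(\ell,\ell\Theta)\ast\bold V_N$, and since the isomorphism of Theorem \ref{mapsdem} sends generator to generator, $f_{N+1}$ may be identified with the composite of $\id_{D(\ell,\ell\Theta)}\ast f_N$ with the isomorphism $\kappa_N\colon D(\ell,\ell\Theta)\ast D(\ell,N\ell\Theta+\lambda)\xrightarrow{\sim}D(\ell,(N+1)\ell\Theta+\lambda)$ coming from Theorem \ref{mapsdem}. Hence it suffices to prove: \textbf{(A)} inside $V(\Lambda)$ one has $V(\Lambda)=\bigcup_{N\ge 0}D(\ell,N\ell\Theta+\lambda)$, where $D(\ell,N\ell\Theta+\lambda)=V_{t_{-N\Theta}w_0}(\Lambda)$ and the right-hand sides form an increasing chain of submodules; and \textbf{(B)} under the $f_N$, the inclusion $D(\ell,N\ell\Theta+\lambda)\hookrightarrow D(\ell,(N+1)\ell\Theta+\lambda)$ corresponds to $v\mapsto w\ast v$, i.e. $\kappa_N(w\ast\xi_N)$ is a nonzero multiple of $\xi_N$.

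For \textbf{(A)}: from $w_0\Theta=-\Theta$ and the translation formula recalled in Section \ref{preliminaries}, $t_{-N\Theta}w_0(\Lambda)=\ell\Lambda_0+w_0(N\ell\Theta+\lambda)\pmod{\mathbb C\delta}$, which identifies $V_{t_{-N\Theta}w_0}(\Lambda)$ with $D(\ell,N\ell\Theta+\lambda)$, exactly as in the proof of Proposition \ref{demprop}. By Lemma \ref{length} applied to $N\Theta\in\Gamma$, together with the standard additivity $\ell(t_{-(N+1)\Theta})=\ell(t_{-N\Theta})+\ell(t_{-\Theta})$ of lengths of antidominant translations, one gets $t_{-N\Theta}w_0\le t_{-(N+1)\Theta}w_0$ in the Bruhat order, so the $D(\ell,N\ell\Theta+\lambda)$ form an increasing chain of $\widehat{\lie b}$--submodules, hence of $\lie g[t]$--submodules, of $V(\Lambda)$. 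That $\{t_{-N\Theta}w_0\}$ is cofinal in $\widehat W$ -- equivalently that the union of this chain is all of $V(\Lambda)$ -- is proved exactly as for $\lambda=0$ in \cite[Theorem 9]{FoL}, the argument using only the combinatorics of $\widehat W$ and not the particular level--$\ell$ weight $\Lambda$.

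For \textbf{(B)}: the plan is first to show that $\eta\mapsto w\ast\eta$ defines a $\lie g[t]$--module homomorphism $\theta_N\colon D(\ell,N\ell\Theta+\lambda)\to D(\ell,\ell\Theta)\ast D(\ell,N\ell\Theta+\lambda)$, i.e. that $w\ast\xi_N$ satisfies the defining relations of $D(\ell,N\ell\Theta+\lambda)$ from Theorem \ref{genreldem1}. The mechanism is uniform and rests on the $\lie g[t]$--invariance of $w$: in $D(\ell,\ell\Theta)^{z_1}\otimes D(\ell,N\ell\Theta+\lambda)^{z_2}$ every element of $\lie g\otimes\mathbb C[t]$ acts on $w\otimes\xi_N$ only through the second tensor factor, since all terms hitting $w$ vanish. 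Writing $s_\alpha,m_\alpha$ for the invariants of the weight $N\ell\Theta+\lambda$ and level $\ell$, the relations $(x^-_\alpha\otimes t^{s_\alpha})\xi_N=0$ and (when $m_\alpha<d_\alpha\ell$) $(x^-_\alpha\otimes t^{s_\alpha-1})^{m_\alpha+1}\xi_N=0$ holding in $D(\ell,N\ell\Theta+\lambda)$ are exactly what is needed: expanding $(x^-_\alpha\otimes(t+z_2)^{s_\alpha})\xi_N$ and $(x^-_\alpha\otimes(t+z_2)^{s_\alpha-1})^{m_\alpha+1}\xi_N$ into monomials in the (commuting) operators $x^-_\alpha\otimes t^k$, the unique top--degree monomial is killed by these relations and every other monomial carries $\xi_N$ into strictly lower filtration degree than the $t$--degree of the operator applied to $w\ast\xi_N$, hence vanishes in the associated graded; thus $(x^-_\alpha\otimes t^{s_\alpha})(w\ast\xi_N)=0$ and $(x^-_\alpha\otimes t^{s_\alpha-1})^{m_\alpha+1}(w\ast\xi_N)=0$. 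The Weyl--module relations $(x^+_i\otimes\mathbb C[t])(w\ast\xi_N)=0$, $(h_i\otimes t^s)(w\ast\xi_N)=\delta_{s,0}(N\ell\Theta+\lambda)(h_i)(w\ast\xi_N)$ and $(x^-_i\otimes 1)^{(N\ell\Theta+\lambda)(h_i)+1}(w\ast\xi_N)=0$ follow the same way, using $x^-_i w=0$ for the last. Composing $\theta_N$ with $\kappa_N$ gives a $\lie g[t]$--map $D(\ell,N\ell\Theta+\lambda)\to D(\ell,(N+1)\ell\Theta+\lambda)$ sending $\xi_N$ to $\kappa_N(w\ast\xi_N)$, a nonzero vector of $\lie g$--weight $N\ell\Theta+\lambda$ annihilated by $\lie n^+\otimes\mathbb C[t]$ and by $\lie h\otimes t\mathbb C[t]$. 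Since the honest inclusion $D(\ell,N\ell\Theta+\lambda)\hookrightarrow D(\ell,(N+1)\ell\Theta+\lambda)$ also sends $\xi_N$ to a vector with these properties, and both are $\lie g[t]$--maps out of the module $D(\ell,N\ell\Theta+\lambda)$, which is cyclic on $\xi_N$, they agree up to a nonzero scalar once one knows that the space of such ``$\lie g[t]$--highest'' vectors of weight $N\ell\Theta+\lambda$ in $D(\ell,(N+1)\ell\Theta+\lambda)$ is one--dimensional; this is what remains, and it should follow from the structure theory of these Demazure modules together with a multiplicity computation via the Demazure operators as in Proposition \ref{demprop} and \cite{FoL2}. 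Any such scalar can be absorbed into a rescaling of the $f_N$, which is all that (B) requires.

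The main obstacle is step \textbf{(B)}, and within it the one--dimensionality of the relevant ``highest weight'' space of $D(\ell,(N+1)\ell\Theta+\lambda)$ needed to conclude that the embedding constructed from $\theta_N$ is the honest Demazure--module inclusion; part \textbf{(A)} is essentially contained in \cite{FoL}. Granting \textbf{(A)} and \textbf{(B)}, the directed systems $\bigl(D(\ell,\ell\Theta)^{\ast N}\ast V(\lambda),\ v\mapsto w\ast v\bigr)$ and $\bigl(D(\ell,N\ell\Theta+\lambda),\ \hookrightarrow\bigr)$ are isomorphic via (suitably rescaled) $f_N$; passing to the direct limit and invoking \textbf{(A)} identifies $\bold V^\infty_{\ell,\lambda}$ with $V(\Lambda)$ as $\lie g[t]$--modules, which proves Theorem \ref{semiinfconjecture}.
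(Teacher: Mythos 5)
Your overall route is the same as the paper's (which, after establishing the isomorphism $D(\ell,(N+1)\ell\Theta+\lambda)\cong D(\ell,\ell\Theta)^{z_2}\ast D(\ell,N\ell\Theta+\lambda)^{z_1}$ from Theorem \ref{mapsdem}, simply invokes the argument of \cite[Theorem 9]{FoL}): identify each term of the directed system with a Demazure module, show $v\mapsto w\otimes v$ induces $\lie g[t]$--maps, match these with the genuine inclusions $V_{t_{-N\Theta}w_0}(\Lambda)\subset V_{t_{-(N+1)\Theta}w_0}(\Lambda)$, and use $V(\Lambda)=\bigcup_N V_{t_{-N\Theta}w_0}(\Lambda)$. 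Your parts (A) and the verification that $w\ast\xi_N$ kills the relations of Theorem \ref{genreldem1} (which works because $\lie g[t]$ acts on $w\otimes\xi_N$ only through the second factor, and each lower binomial term is realized by an operator of smaller filtration degree) are sound, and the iterated-versus-simultaneous fusion point you wave at with ``associativity'' can be repaired by the surjection of Theorem \ref{mapsdem} plus the dimension count of Proposition \ref{demprop}.

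The genuine gap is exactly the point you flag at the end of (B): you need to know that the $\lie g[t]$--map $D(\ell,N\ell\Theta+\lambda)\to D(\ell,(N+1)\ell\Theta+\lambda)$ obtained from $v\mapsto w\ast v$ is (a nonzero multiple of) the canonical inclusion of Demazure submodules of $V(\Lambda)$ --- in particular that it is injective, otherwise the directed system you build need not have limit $V(\Lambda)$ at all. Your proposed substitute, one--dimensionality of the space of vectors of weight $N\ell\Theta+\lambda$ in $D(\ell,(N+1)\ell\Theta+\lambda)$ annihilated by $\lie n^+\otimes\mathbb C[t]$ and $\lie h\otimes t\mathbb C[t]$, is not established by your sketch and does not follow from a character computation: Demazure operators give weight multiplicities, not the dimension of the space of singular vectors for $\Lhb$, so ``a multiplicity computation as in Proposition \ref{demprop}'' does not deliver it. The ingredient that closes this gap in \cite{FoL} is the uniqueness statement for morphisms of Demazure modules: any nonzero $U(\widehat{\lie b})$--map $V_w(\Lambda)\to V(\Lambda)$ is, up to scalar, the canonical embedding (a nontrivial result going back to \cite{joseph}), whence $\Hom_{U(\widehat{\lie b})}\bigl(V_\tau(\Lambda),V_w(\Lambda)\bigr)$ is one--dimensional for $\tau\le w$; applied to the nonzero induced map $\varphi(\overline v)=\overline{w\otimes v}$ it forces $\varphi$ to be the Demazure inclusion up to a scalar, which can then be absorbed as you say. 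Without importing this lemma (or an equivalent), your step (B) --- and hence the identification of $\bold V^\infty_{\ell,\lambda}$ with $V(\Lambda)$ --- remains unproved.
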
 
\begin{proof}
Here we follow the ideas of \cite{FoL}.
By Theorem \ref{mapsdem} we have,
for $z_1\not=z_2 \in \mathbb{C}$ and $N\in \mathbb{Z}_+$, an isomorphism of $\lie g[t]$--modules
$$ D(\ell, (N+1)\ell\Theta+\lambda) \cong D(\ell, \ell \Theta)^{z_2} \ast D(\ell, N\ell\Theta+\lambda)^{z_1}.$$ Using this isomorphism of Demazure modules, the assertion can be proved in exactly the same way as 
\cite[Theorem 9]{FoL}. We refer the readers to \cite{FoL} for more details.
\end{proof}

\section{Application to Schur positivity}\label{schur}

In this section, we recall the definition of Kirillov--Reshetikhin modules and its connection with Demazure modules. It has been proved in  \cite{CMkir} (see also \cite{FoL} and \cite{chve}) that Kirillov--Reshetikhin modules are indeed Demazure modules. We use this fact to establish the possible connections of our main theorem with the Schur positivity conjecture. We will freely use the notation established in the previous sections.  

\iffalse
\subsection{}
 Let us begin by recalling a few facts about minuscule fundamental weights of $\lie{g}$.
An element $\lambda\in P^+$ is said to be minuscule if $\lambda(h_\alpha) \in \{ 0,1 \}$
for all $\alpha \in R^+$. It can be easily seen that if $\lambda \in P^{+}$
is minuscule, then $\lambda$ must be a fundamental weight.
The following is the list of minuscule weights; here, we follow the numbering
of vertices of the Dynkin diagram for $\lie{g}$ in \cite{Bour}.
\begin{equation*}
\begin{array}{ll}
A_{n} & \omega_{i},\, 1 \le i \le n \\[1mm]
B_{n} & \omega_{n} \\[1mm]
C_{n} & \omega_{1} \\[1mm]
D_{n} & \omega_{1},\,\omega_{n-1},\,\omega_{n} \\[1mm]
E_{6} & \omega_{1},\,\omega_{6} \\[1mm]
E_{7} & \omega_{7}
\end{array}
\end{equation*}\fi

\subsection{} We now recall the definition of the Kirillov--Reshetikhin modules from \cite[Section 2]{CMkir}. Thus given $i\in I$ and $m\in\bz_+$, the Kirillov--Reshetikhin module $KR(m\omega_i)$ is the quotient of the  $W_{\loc}(m\omega_i)$ by the submodule generated by the element  $(x_i^-\otimes t)w_{m\omega_i}$. 
We shall need the following isomorphism of $\lie g[t]$--modules (see \cite[Proposition 5.1]{chve}). 
\begin{prop}\label{dem=kr}
For $i\in I$ and $\ell\in \mathbb{Z}_+$, we have $D(\ell, d_i\ell\omega_i)\cong KR(d_i\ell\omega_i)$ as $\lie g[t]$--modules. Further more if we assume that 
$d_i\omega_i(h_\alpha)\le d_\alpha, \ \alpha\in R^+$, then we have $D(\ell, d_i\ell\omega_i)\cong KR(d_i\ell\omega_i)\cong \ev_0V(d_i\ell\omega_i)$ as $\lie g[t]$--modules. 
\end{prop}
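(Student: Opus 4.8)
The plan is to compare presentations. By definition $KR(d_i\ell\omega_i)$ is the quotient of $W_{\loc}(d_i\ell\omega_i)$ by the single relation $(x^-_i\otimes t)w_{d_i\ell\omega_i}=0$, so I would show that this relation forces all the defining relations of $D(\ell,d_i\ell\omega_i)$ listed in \thmref{genreldem1}, and conversely. Fix $i\in I$, $\ell\in\mathbb{Z}_+$, set $\lambda=d_i\ell\omega_i$, and for $\alpha\in R^+$ let $c_i(\alpha)\in\mathbb{Z}_+$ be the coefficient of $\alpha_i$ in $\alpha$. From $(\omega_i,d_j\alpha_j)=\delta_{ij}$ one gets $\lambda(h_\alpha)=d_i\ell\,\omega_i(h_\alpha)=\ell\,d_\alpha\,c_i(\alpha)$ for all $\alpha\in R^+$; feeding this into the recipe preceding \thmref{genreldem1} gives $s_\alpha=c_i(\alpha)$ and $m_\alpha=d_\alpha\ell$ whenever $c_i(\alpha)>0$, so no relation of the second type in \er{demrel00} occurs. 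Thus \thmref{genreldem1} exhibits $D(\ell,d_i\ell\omega_i)$ as the quotient of $W_{\loc}(d_i\ell\omega_i)$ by the submodule generated by $\{(x^-_\alpha\otimes t^{c_i(\alpha)})w_\lambda:\alpha\in R^+,\ c_i(\alpha)>0\}$ (for $c_i(\alpha)=0$ one has $\lambda(h_\alpha)=0$, so the corresponding relations already hold in $W_{\loc}(\lambda)$). Since the $\alpha=\alpha_i$ relation is exactly $(x^-_i\otimes t)w_\lambda=0$, this immediately gives a surjection of graded $\lie g[t]$--modules $KR(d_i\ell\omega_i)\twoheadrightarrow D(\ell,d_i\ell\omega_i)$.

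The substantive step is the reverse surjection, for which I would verify that $(x^-_\alpha\otimes t^{c_i(\alpha)})w_\lambda=0$ for every $\alpha\in R^+$ with $c_i(\alpha)>0$ already holds in $KR(d_i\ell\omega_i)$, $w_\lambda$ now denoting the image of the generator. Use the $\mathbb{Z}$--grading $\lie g=\bigoplus_c\lie g_c$ with $\lie g_c=\bigoplus_{c_i(\beta)=c}\lie g_\beta$ and $\lie h\subset\lie g_0$, so that $\lie g_0$ is the standard Levi, $x^-_i\in\lie g_{-1}$, and (as $x^-_i$ is a $\lie g_0$--highest weight vector) $\lie g_{-1}=\ad\!\big(U(\lie n^-\cap\lie g_0)\big)x^-_i$. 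First, $x^-_j w_\lambda=0$ for $j\neq i$, because $\lambda(h_j)=0$. Next, I would prove $(\lie g_{-c}\otimes t^c)w_\lambda=0$ by induction on $c\geq 1$. For $c=1$: a spanning element of $\lie g_{-1}$ has the form $[x^-_{j_1},[\dots,[x^-_{j_r},x^-_i]\dots]]$ with all $j_k\neq i$; lifting to $[x^-_{j_1}\otimes 1,[\dots,[x^-_{j_r}\otimes 1,x^-_i\otimes t]\dots]]$, expanding, and using $x^-_j w_\lambda=0$ to kill every term in which some $x^-_{j_k}\otimes 1$ hits $w_\lambda$ directly, one is left with $(x^-_{j_1}\otimes 1)\cdots(x^-_{j_r}\otimes 1)(x^-_i\otimes t)w_\lambda=0$. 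For $c\geq 2$: one invokes the standard fact that for this grading $\lie g_{-c}=[\lie g_{-1},\lie g_{-(c-1)}]$ — valid because $\bigoplus_{c\geq 1}\lie g_c$ is generated in degree $1$, since $x^+_i\in\lie g_1$ while every other Chevalley generator lies in $\lie g_0\oplus\lie g_{-1}$ — and then for $u\in\lie g_{-1}$, $v\in\lie g_{-(c-1)}$ computes $([u,v]\otimes t^c)w_\lambda=(u\otimes t)(v\otimes t^{c-1})w_\lambda-(v\otimes t^{c-1})(u\otimes t)w_\lambda=0$, the first term vanishing by the inductive hypothesis (applied to $v$) and the second by the case $c=1$ (applied to $u$). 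This produces the surjection $D(\ell,d_i\ell\omega_i)\twoheadrightarrow KR(d_i\ell\omega_i)$; both modules being finite dimensional (quotients of a local Weyl module), the two surjections give $D(\ell,d_i\ell\omega_i)\cong KR(d_i\ell\omega_i)$.

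For the last assertion, $d_i\omega_i(h_\alpha)=d_\alpha c_i(\alpha)$ shows the hypothesis $d_i\omega_i(h_\alpha)\leq d_\alpha$ for all $\alpha\in R^+$ is exactly $c_i(\alpha)\leq 1$ for all $\alpha\in R^+$; since $\Theta$ is the highest root one has $c_i(\alpha)\leq c_i(\Theta)$ for all $\alpha$, and $d_\Theta=1$, so this condition is equivalent to $d_i\ell\omega_i(h_\Theta)=\ell\,c_i(\Theta)\leq\ell$, i.e.\ to $d_i\ell\omega_i\in\mathcal{P^+_\ell}$. The Corollary following \thmref{genreldem1} then gives $D(\ell,d_i\ell\omega_i)\cong\ev_0 V(d_i\ell\omega_i)$, which combined with the isomorphism above completes the proof.

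The hard part will be the reverse surjection in the first paragraph — specifically the grading input $\lie g_{-c}=[\lie g_{-1},\lie g_{-(c-1)}]$ together with the bookkeeping needed to push the one Kirillov--Reshetikhin relation up through all $\alpha_i$--degrees. The first surjection and the final assertion follow routinely from the presentations once this is in place. (One could instead bypass the induction by comparing the $\lie g$--character of $KR(d_i\ell\omega_i)$ with the Demazure character of $D(\ell,d_i\ell\omega_i)$ and matching dimensions against the surjection $KR(d_i\ell\omega_i)\twoheadrightarrow D(\ell,d_i\ell\omega_i)$; cf.\ \cite[Proposition 5.1]{chve} and \cite{CMkir}.)
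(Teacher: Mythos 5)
Your argument is correct, but it is genuinely different from what the paper does: the paper gives no proof of this proposition at all, simply citing \cite[Proposition 5.1]{chve} (and remarking that the isomorphism $D(\ell,d_i\ell\omega_i)\cong KR(d_i\ell\omega_i)$ goes back to \cite[Section 5]{CMkir} and \cite[Section 3.2]{FoL}), while you supply a self-contained verification from the simplified presentation in \thmref{genreldem1}. Your computation $\lambda(h_\alpha)=\ell d_\alpha c_i(\alpha)$, hence $s_\alpha=c_i(\alpha)$, $m_\alpha=d_\alpha\ell$ (so no relations of the second type in \er{demrel00}), is right, and the induction on the $\alpha_i$-degree, using $(x^-_j\otimes 1)w_\lambda=0$ for $j\ne i$ to reduce every nested commutator to $(x^-_{j_1}\otimes 1)\cdots(x^-_{j_r}\otimes 1)(x^-_i\otimes t)w_\lambda$, does show that the single Kirillov--Reshetikhin relation forces all the Demazure relations, so the two quotients of $W_{\loc}(d_i\ell\omega_i)$ coincide; the reduction of the final assertion to $c_i(\Theta)\le 1$, i.e.\ $d_i\ell\omega_i\in\mathcal{P^+_\ell}$, and the Corollary to \thmref{genreldem1} matches exactly the paper's Remark. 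What your route buys is independence from the character/dimension comparisons of \cite{chve} and \cite{CMkir}; what it costs is reliance on two structural facts about the parabolic grading attached to the node $i$ that you assert with only one-line justifications: that $\lie g_{-1}=\ad\bigl(U(\lie n^-\cap\lie g_0)\bigr)x^-_i$ (this needs irreducibility of $\lie g_{\pm 1}$ as a $\lie g_0$--module, Kostant's theorem for maximal parabolics, not merely that $x^-_i$ is a $\lie g_0$--highest weight vector), and that $\lie g_{-c}=[\lie g_{-1},\lie g_{-(c-1)}]$ for $c\ge 2$ (generation of the nilradical by its first graded piece). Both are standard and true, so there is no gap, but in a written version you should either cite them or give the short root-combinatorial argument (for every root $\beta$ with $c_i(\beta)=c\ge 2$ there is a root $\gamma$ with $c_i(\gamma)=1$ and $\beta-\gamma$ a root).
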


\begin{rem}\label{minusculecoweight}
We remark here that the isomorphism between the Kirillov--Reshetikhin modules and the Demazure modules
was proved earlier in \cite[Section 5]{CMkir} and \cite[Section 3.2]{FoL}. The condition $d_i\omega_i(h_\alpha)\le d_\alpha, \ \alpha\in R^+$ is equivalent to saying that $d_i\omega_i(h_\Theta)\le 1.$ This forces $d_i=1$ for all such $d_i\omega_i.$ Indeed we have the following list of all such $d_i\omega_i$'s:
here, we follow the numbering
of vertices of the Dynkin diagram for $\lie{g}$ in \cite{Bour}.
\begin{equation*}
\begin{array}{ll}
A_{n} & \omega_{i},\, 1 \le i \le n \\[1mm]
B_{n} & \omega_{1} \\[1mm]
C_{n} & \omega_{n} \\[1mm]
D_{n} & \omega_{1},\,\omega_{n-1},\,\omega_{n} \\[1mm]
E_{6} & \omega_{1},\,\omega_{6} \\[1mm]
E_{7} & \omega_{7}
\end{array}
\end{equation*}
\end{rem}

\subsection{}New interpretations of Theorem \ref{mapsdem} using the isomorphism between Kirillov--Reshetikhin modules and Demazure modules has many interesting consequences. We fix a non--negative integer $\ell$ in what follows next in this section.
The following is an immediate consequence of Theorem \ref{mapsdem} and Proposition \ref{dem=kr}.
\begin{thm}\label{krdecom}
 Let $\lambda\in \mathcal{P^+_\ell}$ and $\mu=\sum\limits_{i\in I}d_is_i \omega_i \in \Gamma$. Then we have the following isomorphism of $\lie g[t]$--modules,
 $$D(\ell, \ell\mu+\lambda)\cong KR(d_1\ell\omega_1)^{*s_1}*\cdots *KR(d_n\ell\omega_n)^{*s_n}*V(\lambda).$$
In particular, the fusion product on the right hand side is independent of the choice of the parameters.
\end{thm}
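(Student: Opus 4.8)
The plan is to obtain this statement as a direct specialization of Theorem \ref{mapsdem}, followed by the identification of Demazure modules of the form $D(\ell,d_i\ell\omega_i)$ with Kirillov--Reshetikhin modules supplied by Proposition \ref{dem=kr}.

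First I would observe that for each $i\in I$ the weight $d_i\omega_i$ belongs to $\Gamma$: it arises from the defining description of $\Gamma$ by taking the $i$-th coefficient equal to $1$ and all the others equal to $0$. Given $\mu=\sum_{i\in I}d_is_i\omega_i\in\Gamma$, I would then apply the isomorphism part of Theorem \ref{mapsdem} to the list $\mu_1,\dots,\mu_m$ (with $m=\sum_{i\in I}s_i$) obtained by repeating each $d_i\omega_i$ exactly $s_i$ times, taking $p_1=\cdots=p_m=\ell$. Here $\mu=\mu_1+\cdots+\mu_m$, so the inequality $\mu(h_\alpha)\ge\sum_{j}\mu_j(h_\alpha)$ holds with equality for every $\alpha\in R^+$, and $\lambda\in\mathcal{P^+_\ell}$ by hypothesis; hence all the hypotheses of Theorem \ref{mapsdem} are met and it yields
$$D(\ell,\ell\mu+\lambda)\cong D(\ell,d_1\ell\omega_1)^{*s_1}*\cdots*D(\ell,d_n\ell\omega_n)^{*s_n}*V(\lambda),$$
with the fusion product on the right independent of the choice of distinct parameters.

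Next I would invoke Proposition \ref{dem=kr}, which gives $D(\ell,d_i\ell\omega_i)\cong KR(d_i\ell\omega_i)$ as graded $\lie g[t]$-modules for each $i\in I$, and substitute these isomorphisms factor by factor into the displayed fusion product. This uses only the routine fact that the fusion product, being the associated graded module of a tensor product of the twisted modules $V_s^{z_s}$, depends functorially on each factor: a degree-zero isomorphism $V_s\cong W_s$ carries cyclic generator to cyclic generator, hence induces an isomorphism $\bigotimes_sV_s^{z_s}\cong\bigotimes_sW_s^{z_s}$ compatible with the defining filtrations, and therefore an isomorphism of the two fusion products. Performing these replacements yields
$$D(\ell,\ell\mu+\lambda)\cong KR(d_1\ell\omega_1)^{*s_1}*\cdots*KR(d_n\ell\omega_n)^{*s_n}*V(\lambda),$$
and the stated independence of the parameters is inherited directly from Theorem \ref{mapsdem}.

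I do not expect a genuine obstacle here: the argument is essentially bookkeeping once Theorem \ref{mapsdem} and Proposition \ref{dem=kr} are in hand. The only point worth stating explicitly is the functoriality of the fusion product invoked in the last step, which is an immediate consequence of its construction as an associated graded module of a tensor product.
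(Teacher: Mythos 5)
Your proposal is correct and follows exactly the route the paper intends: the paper derives Theorem \ref{krdecom} as an immediate consequence of the isomorphism part of Theorem \ref{mapsdem} (applied with each $d_i\omega_i$ repeated $s_i$ times and $p_1=\cdots=p_m=\ell$) together with the identification $D(\ell,d_i\ell\omega_i)\cong KR(d_i\ell\omega_i)$ from Proposition \ref{dem=kr}. Your explicit remark on replacing fusion-product factors by isomorphic cyclic graded modules is the only bookkeeping the paper leaves unsaid, and it is handled correctly.
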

\vskip 6pt

\noindent
We use Theorem \ref{krdecom} to establish an elegant presentation for the module 
$KR(d_i\ell\omega_i)^{\ast k}\ast V(\lambda)$, $\lambda\in \mathcal{P^+_\ell}$.

\begin{prop}\label{demrelfuskr}
 Fix $i\in I$ and let $k\in \mathbb{N}$ and $\lambda\in \mathcal{P^+_\ell}$. Then the module $KR(d_i\ell \omega_i)^{\ast k}\ast V(\lambda)$ is the quotient of $W_{\loc}(kd_i\ell\omega_i+\lambda)$ by the submodule generated by the elements
 \begin{gather}\label{demrelschur}\{ (x^-_{\alpha}\otimes t^{ks_{\alpha}+1})w_{kd_i\ell\omega_i+\lambda}: \alpha\in R^+\}\ \bigcup\  \{ (x^-_{\alpha}\otimes t^{ks_{\alpha}})^{\lambda(h_\alpha)+1}w_{kd_i\ell\omega_i+\lambda}: \alpha\in R^+\}\ ,\end{gather}
 where $s_\alpha:=\frac{1}{d_\alpha}d_i\omega_i(h_\alpha)$ for all $\alpha\in R^+,$ and hence the module $KR(d_i\ell \omega_i)^{\ast k}\ast V(\lambda)$ is independent of the choice of parameters.
 \end{prop}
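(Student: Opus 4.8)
The plan is to realize $KR(d_i\ell\omega_i)^{\ast k}\ast V(\lambda)$ as a Demazure module via Theorem \ref{krdecom} and then to sandwich the proposed presentation \eqref{demrelschur} between the two known presentations of that Demazure module. Set $\Lambda=kd_i\ell\omega_i+\lambda\in P^+$. Applying Theorem \ref{krdecom} with $\mu=d_ik\omega_i\in\Gamma$ (that is, with $s_i=k$ and $s_j=0$ for $j\neq i$) gives an isomorphism of $\lie g[t]$--modules $KR(d_i\ell\omega_i)^{\ast k}\ast V(\lambda)\cong D(\ell,\Lambda)$ carrying the cyclic generator of the left side to the image of the generator $w_\Lambda$ of $W_{\loc}(\Lambda)$. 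Hence it suffices to prove that the $\lie g[t]$--submodule $\langle\eqref{demrelschur}\rangle\subseteq W_{\loc}(\Lambda)$ generated by the elements in \eqref{demrelschur} equals the kernel $K$ of the canonical surjection $W_{\loc}(\Lambda)\twoheadrightarrow D(\ell,\Lambda)$; the parameter--independence then follows at once (as it already does from Theorem \ref{krdecom}). Note first that $s_\alpha:=d_i\omega_i(h_\alpha)/d_\alpha\in\mathbb{Z}_+$ by Lemma \ref{dalpha} applied to $d_i\omega_i\in\Gamma$, so that $\Lambda(h_\alpha)=d_\alpha\ell\,(ks_\alpha)+\lambda(h_\alpha)$ for all $\alpha\in R^+$, and that $0\le\lambda(h_\alpha)\le d_\alpha\lambda(h_\Theta)\le d_\alpha\ell$ for all $\alpha\in R^+$, exactly as in the proof of the Corollary to Theorem \ref{genreldem1}, since $\lambda\in\mathcal{P^+_\ell}$.

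First I would check $\langle\eqref{demrelschur}\rangle\subseteq K$. By the definition of $D(\ell,\Lambda)$, the submodule $K$ is generated by the relations \eqref{demrelo}, so it is enough to produce each generator of \eqref{demrelschur} among the relations \eqref{demrelo}. The choice $p=ks_\alpha$, $r=\lambda(h_\alpha)$ is admissible in \eqref{demrelo} because $\lambda(h_\alpha)=\Lambda(h_\alpha)-d_\alpha\ell\,(ks_\alpha)=\max\{0,\Lambda(h_\alpha)-d_\alpha\ell\,(ks_\alpha)\}$, and it yields $(x^-_\alpha\otimes t^{ks_\alpha})^{\lambda(h_\alpha)+1}w_\Lambda$; the choice $p=ks_\alpha+1$, $r=0$ is admissible because $\Lambda(h_\alpha)-d_\alpha\ell\,(ks_\alpha+1)=\lambda(h_\alpha)-d_\alpha\ell\le 0$, and it yields $(x^-_\alpha\otimes t^{ks_\alpha+1})w_\Lambda$. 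Thus $\langle\eqref{demrelschur}\rangle\subseteq\langle\eqref{demrelo}\rangle=K$.

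For the reverse inclusion I would invoke Theorem \ref{genreldem1}, which identifies $K$ with the submodule generated by \eqref{demrel00}; so it is enough to check that each generator occurring in \eqref{demrel00} for the weight $\Lambda$ lies in $\langle\eqref{demrelschur}\rangle$. Writing $S_\alpha,m_\alpha$ for the integers attached to $\Lambda$ in Theorem \ref{genreldem1}, the formula $\Lambda(h_\alpha)=d_\alpha\ell\,(ks_\alpha)+\lambda(h_\alpha)$ together with $0\le\lambda(h_\alpha)\le d_\alpha\ell$ leaves only three possibilities. If $\lambda(h_\alpha)=0$, then $\Lambda(h_\alpha)=d_\alpha\ell\,(ks_\alpha)$, which vanishes when $s_\alpha=0$ (and then $x^-_\alpha w_\Lambda=0$ already holds in $W_{\loc}(\Lambda)$ by \eqref{intega}), while otherwise $S_\alpha=ks_\alpha$, $m_\alpha=d_\alpha\ell$, so the only relation in \eqref{demrel00} is $(x^-_\alpha\otimes t^{ks_\alpha})w_\Lambda$, which is precisely the relation $(x^-_\alpha\otimes t^{ks_\alpha})^{\lambda(h_\alpha)+1}w_\Lambda$ of \eqref{demrelschur}. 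If $0<\lambda(h_\alpha)<d_\alpha\ell$, then $S_\alpha=ks_\alpha+1$ and $m_\alpha=\lambda(h_\alpha)<d_\alpha\ell$, so the two relations in \eqref{demrel00}, namely $(x^-_\alpha\otimes t^{ks_\alpha+1})w_\Lambda$ and $(x^-_\alpha\otimes t^{ks_\alpha})^{\lambda(h_\alpha)+1}w_\Lambda$, occur verbatim in \eqref{demrelschur}. If $\lambda(h_\alpha)=d_\alpha\ell$, then $S_\alpha=ks_\alpha+1$ and $m_\alpha=d_\alpha\ell$, so the only relation in \eqref{demrel00} is $(x^-_\alpha\otimes t^{ks_\alpha+1})w_\Lambda$, again in \eqref{demrelschur}. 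Hence $K=\langle\eqref{demrel00}\rangle\subseteq\langle\eqref{demrelschur}\rangle$, and combined with the previous step this gives $\langle\eqref{demrelschur}\rangle=K$, which proves the proposition.

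The point to watch is the last case $\lambda(h_\alpha)=d_\alpha\ell$: there \eqref{demrelschur} also carries the relation $(x^-_\alpha\otimes t^{ks_\alpha})^{d_\alpha\ell+1}w_\Lambda$, which is \emph{absent} from the simplified list \eqref{demrel00} (whose second family only includes $\alpha$ with $m_\alpha<d_\alpha\ell$), so one cannot establish $\langle\eqref{demrelschur}\rangle=K$ by matching \eqref{demrelschur} against \eqref{demrel00} alone. This is exactly why the argument is run as the sandwich $\langle\eqref{demrel00}\rangle\subseteq\langle\eqref{demrelschur}\rangle\subseteq\langle\eqref{demrelo}\rangle$: the two outer submodules coincide by Theorem \ref{genreldem1}, forcing the middle one to coincide with both, and the extra relation is harmless because it is one of the original defining relations \eqref{demrelo} of $D(\ell,\Lambda)$ (take $p=ks_\alpha$ and $r=d_\alpha\ell=\Lambda(h_\alpha)-d_\alpha\ell\,(ks_\alpha)$). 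Apart from this observation, the only numerical inputs are the identity $\Lambda(h_\alpha)-d_\alpha\ell\,(ks_\alpha)=\lambda(h_\alpha)$ and the bound $\lambda(h_\alpha)\le d_\alpha\ell$, both immediate from $\lambda\in\mathcal{P^+_\ell}$.
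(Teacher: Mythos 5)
Your proof is correct and follows essentially the same route as the paper: identify $KR(d_i\ell\omega_i)^{\ast k}\ast V(\lambda)$ with $D(\ell,kd_i\ell\omega_i+\lambda)$ via Theorem \ref{krdecom} and then read off the presentation from Theorem \ref{genreldem1} using $\lambda(h_\alpha)\le d_\alpha\ell$. Your sandwich argument $\langle\eqref{demrel00}\rangle\subseteq\langle\eqref{demrelschur}\rangle\subseteq\langle\eqref{demrelo}\rangle$ merely spells out, including the boundary case $\lambda(h_\alpha)=d_\alpha\ell$, what the paper declares ``immediate.''
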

 
 \begin{proof}
  Using Theorem \ref{krdecom}, we get $D(\ell, kd_i\ell \omega_i+\lambda)\cong KR(d_i\ell\omega_i)^{\ast k}*V(\lambda)$ as $\lie g[t]$--modules. Since $\lambda \in \mathcal{P^+_\ell}$, we have
$\lambda(h_\alpha)\le d_\alpha \ell$ for all $\alpha\in R^+$, now the proposition is immediate from Theorem \ref{genreldem1}.
 \end{proof} 
 
\iffalse
\begin{cor}\label{demrelfusirrd}
 Let $(\ell, \lambda)\in \widetilde{\Gamma}$ and $\omega_i$ be minuscule. Then the module $V(d_i\ell \omega_i)^{\ast k}\ast V(\lambda)$ is the quotient of $W_{\loc}(kd_i\ell\omega_i+\lambda)$ by the submodule generated by the elements
 \begin{gather}\label{demrelschur}\{ (x^-_{\alpha}\otimes t^{ks_{\alpha}+1})w_{kd_i\ell\omega_i+\lambda}: \alpha\in R^+\}\ \bigcup\  \{ (x^-_{\alpha}\otimes t^{ks_{\alpha}})^{\lambda(h_\alpha)+1}w_{kd_i\ell\omega_i+\lambda}: \alpha\in R^+\}\ ,\end{gather}
 where $s_\alpha:=\frac{1}{d_\alpha}d_i\omega_i(h_\alpha)$ for all $\alpha\in R^+.$ In particular, the module $V(d_i\ell \omega_i)^{\ast k}\ast V(\lambda)$ is independent of the choice of parameters.
 \end{cor}
 
 \begin{proof}
  It is immediate from propositions \ref{dem=kr} and \ref{demrelfuskr}.
 \end{proof}\fi
\subsection{}
When we specialize Theorem \ref{krdecom} to a fundamental weight that is listed in the remark \ref{minusculecoweight}, we get simplified presentation for the fusion products of many interesting family of irreducible representations of $\lie g$. 
We note here that the fusion products of these special family of irreducible $\lie g$--modules are indeed Demazure modules. In particular we get presentation of 2--fold fusion product $V(\lambda)*V(\mu)$ for many special choices of $\lambda, \mu\in P^+$ using this fact, which allows us to provide further evidence to the conjecture on Schur positivity stated in \cite{chfsa}.
For example, we provide presentation for $V(\ell\omega_i)*V(m\omega_j)$ when $\lie g=\lie sl_{n+1}$. 
We remark here that these presentations were previously known only for some special cases, but not in this generality. For example see \cite{FFq-char} for the case $\lie g=\lie sl_2$, and \cite{chve},\cite{FoL} for the case $\ell=m$ and $\lie g=\lie sl_{n+1}$ general.
 \begin{prop}\label{2-fold}
  Let $i\in I$ be such that $d_i\omega_i(h_\Theta)\le 1$ and $\lambda\in \mathcal{P^+_\ell}$. Then the module $V(d_i\ell \omega_i)\ast V(\lambda)$ is the quotient of $W_{\loc}(d_i\ell\omega_i+\lambda)$ by the submodule generated by the elements
 $$\{ (x^-_{\alpha}\otimes t^{2})w_{d_i\ell\omega_i+\lambda}: \alpha\in R^+\}\ \bigcup\  \{ (x^-_{\alpha}\otimes t)^{\emph{min}\{d_i\ell \omega_i(h_\alpha),\ \lambda(h_\alpha)\}+1}w_{d_i\ell\omega_i+\lambda}: \alpha\in R^+\}\ .$$  
 \end{prop}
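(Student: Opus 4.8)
The plan is to deduce Proposition~\ref{2-fold} from the $k=1$ case of Proposition~\ref{demrelfuskr}, after translating the hypothesis $d_i\omega_i(h_\Theta)\le 1$ into numerical information. First I would note, exactly as in Remark~\ref{minusculecoweight}, that $d_i\omega_i(h_\Theta)\le 1$ is equivalent to $d_i\omega_i(h_\alpha)\le d_\alpha$ for every $\alpha\in R^+$ (using $d_i\omega_i(h_\alpha)\le d_\alpha\, d_i\omega_i(h_\Theta)$). Since $d_i\omega_i\in\Gamma$, Lemma~\ref{dalpha} gives $s_\alpha:=\tfrac{1}{d_\alpha}d_i\omega_i(h_\alpha)\in\mathbb{Z}_+$, and the bound forces $s_\alpha\in\{0,1\}$ for all $\alpha\in R^+$. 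The same hypothesis is the one appearing in Proposition~\ref{dem=kr}, so $KR(d_i\ell\omega_i)\cong\ev_0 V(d_i\ell\omega_i)$; hence $V(d_i\ell\omega_i)\ast V(\lambda)$ and $KR(d_i\ell\omega_i)\ast V(\lambda)$ denote the same $\lie g[t]$--module (in particular independent of parameters), and Proposition~\ref{demrelfuskr} with $k=1$ presents it as the quotient of $W_{\loc}(d_i\ell\omega_i+\lambda)$ by the submodule generated by the elements $(x^-_\alpha\otimes t^{s_\alpha+1})w$ and $(x^-_\alpha\otimes t^{s_\alpha})^{\lambda(h_\alpha)+1}w$, $\alpha\in R^+$, where $w=w_{d_i\ell\omega_i+\lambda}$.

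It then remains to check that this set of relations coincides with the one in the statement, which is the point of the case split on $s_\alpha$. If $s_\alpha=1$, then $d_i\ell\omega_i(h_\alpha)=d_\alpha\ell\ge\lambda(h_\alpha)$ because $\lambda\in\mathcal{P^+_\ell}$, so $\min\{d_i\ell\omega_i(h_\alpha),\lambda(h_\alpha)\}=\lambda(h_\alpha)$ and the two relations $(x^-_\alpha\otimes t^2)w=0$ and $(x^-_\alpha\otimes t)^{\lambda(h_\alpha)+1}w=0$ in the statement are literally the relations $(x^-_\alpha\otimes t^{s_\alpha+1})w=0$, $(x^-_\alpha\otimes t^{s_\alpha})^{\lambda(h_\alpha)+1}w=0$ of Proposition~\ref{demrelfuskr}. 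If $s_\alpha=0$, then $d_i\ell\omega_i(h_\alpha)=0$, so $\min\{\cdots\}=0$ and the statement's relations read $(x^-_\alpha\otimes t^2)w=0$ and $(x^-_\alpha\otimes t)w=0$; meanwhile Proposition~\ref{demrelfuskr} contributes $(x^-_\alpha\otimes t)w=0$ together with $(x^-_\alpha\otimes 1)^{\lambda(h_\alpha)+1}w=0$, and the latter already holds in $W_{\loc}(d_i\ell\omega_i+\lambda)$ since $(d_i\ell\omega_i+\lambda)(h_\alpha)=\lambda(h_\alpha)$, while $(x^-_\alpha\otimes t^2)w=0$ follows from $(x^-_\alpha\otimes t)w=0$ using $[h_\alpha\otimes t,\,x^-_\alpha\otimes t]=-2\,x^-_\alpha\otimes t^2$ and $(h_\alpha\otimes t)w=0$ (as $h_\alpha$ lies in the coroot lattice). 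Thus the two submodules of $W_{\loc}(d_i\ell\omega_i+\lambda)$ coincide and the proposition follows.

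I do not expect a genuine obstacle here: all the substantive work is already contained in Propositions~\ref{demrelfuskr} and~\ref{dem=kr}, and what remains is purely the bookkeeping above. The only step requiring a little care is that the generating set in the statement is written uniformly in $\alpha$, with no visible reference to $s_\alpha$, so one must genuinely verify the two presentations agree case by case; and within that, the mild point is the mutual redundancy, for $s_\alpha=0$, between the $t^2$--relation of the statement and the $1$--st power relation coming from $W_{\loc}$.
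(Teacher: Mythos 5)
Your argument is correct and takes essentially the same route as the paper: both deduce the statement from Proposition~\ref{demrelfuskr} with $k=1$ together with the identification $KR(d_i\ell\omega_i)\cong\ev_0 V(d_i\ell\omega_i)$ from Proposition~\ref{dem=kr}, and then reconcile the two generating sets using $s_\alpha\in\{0,1\}$ and $\lambda(h_\alpha)\le d_\alpha\ell$. The only minor difference is that the paper verifies $(x^-_\alpha\otimes t^{2})(v_i\ast v_\lambda)=0$ for all $\alpha$ at once directly from the evaluation-module construction (since $(x\otimes(t-z_1)(t-z_2))$ annihilates the tensor product of the two evaluation modules), whereas you obtain the $t^{2}$--relation in the $s_\alpha=0$ case inside $W_{\loc}$ by bracketing with $h_\alpha\otimes t$; both verifications are valid.
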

 \begin{proof}
  Let $z_1, z_2\in \mathbb{C}$ be two distinct complex numbers that used in the definition of the fusion product $V(d_i\ell\omega_i)*V(\lambda)$ and let $v_i*v_\lambda$ be the image of $v_{d_i\ell\omega_i}\otimes v_\lambda$ in $V(d_i\ell\omega_i)*V(\lambda)$.
  Then for any $x\in \lie g$, by the definition of the evaluation representation, we get $(x\otimes (t-z_1)(t-z_2))(v_{d_i\ell\omega_i}\otimes v_{\lambda})=0.$ Thus we get $(x^-_{\alpha}\otimes t^{2})(v_i*v_\lambda)=0$ for any $\alpha\in R^+.$
Now since $(x^-_{\alpha}\otimes t^{2})(v_i*v_\lambda)=0$ implies $(x^-_{\alpha}\otimes t^{3})(v_i*v_\lambda)=0$ for any $\alpha\in R^+,$ the result is immediate from Proposition \ref{demrelfuskr}.  \end{proof}

\vskip 6pt

\noindent

 \begin{cor}
  Let $\lie g$ be a simple Lie algebra with a fundamental weight $\omega_i$ such that $d_i\omega_i(h_\Theta)\le 1$. Let $j \in I$ and $\ell, m\in \mathbb{N}$ such that $\ell\ge m.$ We also assume that
\begin{itemize}
      \item if $\lie g=B_n \ \text{and} \ j\neq 1$ then $\ell\ge 2m,$ 
      \item if $\lie g=C_n \ \text{and} \ j\neq n$ then $\ell\ge 2m,$ 
      \item if $\lie g=D_n \ \text{and} \ j\not\in \{1,n-1,n\}$ then $\ell\ge 2m,$ 
      \item if $\lie g=E_6$ and $j\in \{2,3,5\}$ then $\ell\ge 2m$; for $j=4$, assume that $\ell\ge 3m,$
      \item if $\lie g=E_7$ and $j\in \{1,2,6\}$ then $\ell\ge 2m$; for $j=3,5$, assume that $\ell\ge 3m;$ for $j=4$, assume that $\ell\ge 4m.$
\end{itemize}Then the module $V(d_i\ell \omega_i)\ast V(d_jm\omega_j)$ is the quotient of $W_{\loc}(d_i\ell\omega_i+d_jm\omega_j)$ by the submodule generated by the elements
 $$\{ (x^-_{\alpha}\otimes t^{2})w_{d_i\ell\omega_i+d_jm\omega_j}: \alpha\in R^+\}\ \bigcup\  \{ (x^-_{\alpha}\otimes t)^{\emph{min}\{d_i\ell\omega_i(h_\alpha),\ d_jm\omega_j(h_\alpha)\}+1}w_{d_i\ell\omega_i+d_jm\omega_j}: \alpha\in R^+\}\ .$$
\end{cor}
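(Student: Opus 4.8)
The plan is to read this off directly from Proposition \ref{2-fold} with the choice $\lambda=d_jm\omega_j$. The two things needed to invoke that proposition are that $d_i\omega_i(h_\Theta)\le 1$ — which is the standing assumption on $\omega_i$ — and that $d_jm\omega_j\in\mathcal{P^+_\ell}$. By the remark following Theorem \ref{mapsdem} (and the corollary to Theorem \ref{genreldem1}), membership in $\mathcal{P^+_\ell}$ is equivalent to the single numerical condition $d_jm\omega_j(h_\Theta)\le\ell$. Granting this, Proposition \ref{2-fold} applied to $\lambda=d_jm\omega_j$ yields exactly the asserted presentation of $V(d_i\ell\omega_i)\ast V(d_jm\omega_j)$ as a quotient of $W_{\loc}(d_i\ell\omega_i+d_jm\omega_j)$, since its relation $(x^-_\alpha\otimes t)^{\min\{d_i\ell\omega_i(h_\alpha),\lambda(h_\alpha)\}+1}$ becomes precisely $(x^-_\alpha\otimes t)^{\min\{d_i\ell\omega_i(h_\alpha),d_jm\omega_j(h_\alpha)\}+1}$ for all $\alpha\in R^+$.

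So the only thing to check is that each item of the corollary forces $d_jm\omega_j(h_\Theta)\le\ell$. I would first record that $\langle\omega_j,h_\Theta\rangle$ equals the coefficient of the simple coroot $\alpha_j^\vee$ in the expansion of $\Theta^\vee$ (the $j$-th comark), so that $d_jm\omega_j(h_\Theta)=m\cdot d_j\langle\omega_j,h_\Theta\rangle$. When $\lie g$ is simply laced this is $m$ times the coefficient of $\alpha_j$ in $\Theta$; for $B_n$ it equals $m$ when $j=1$ and $2m$ otherwise; for $C_n$ it equals $2m$ when $j\ne n$ and $m$ when $j=n$. Reading these numbers off the standard table of highest roots, one checks type by type that $d_jm\omega_j(h_\Theta)\le\ell$ holds under the stated hypotheses: for $A_n$ it is just $\ell\ge m$; for $B_n$, $C_n$, $D_n$ it is $\ell\ge m$ at the distinguished nodes ($j=1$ for $B_n$; $j=n$ for $C_n$; $j\in\{1,n-1,n\}$ for $D_n$) and $\ell\ge 2m$ at the remaining ones; and for $E_6$, $E_7$ the coefficient of $\alpha_j$ in $\Theta$ takes the values $1$, $2$, $3$ (and $4$, for $E_7$ at $j=4$) at the nodes grouped in the corollary, yielding respectively $\ell\ge m$, $\ell\ge 2m$, $\ell\ge 3m$, $\ell\ge 4m$. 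In every case this matches exactly the inequality assumed, with the blanket hypothesis $\ell\ge m$ handling the comark-one nodes. Once $d_jm\omega_j\in\mathcal{P^+_\ell}$ is established, Proposition \ref{2-fold} applies verbatim and gives the claim.

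There is no genuine obstacle here beyond the finite bookkeeping of this case analysis (it is the same computation that underlies the list in Remark \ref{minusculecoweight}). The one point that repays care is the role of the symmetrizing integers $d_j$ in the non-simply-laced cases: one cannot simply use the marks of $\Theta$ but must combine the comarks of $\Theta^\vee$ with $d_j$. Thus the long node $\omega_n$ of $C_n$, where $d_n=1$, behaves like a comark-one node and needs only $\ell\ge m$, while the short nodes $\omega_j$ with $j<n$, where $d_j=2$, force $\ell\ge 2m$; conversely the short node $\omega_n$ of $B_n$ has $d_n=2$ but comark $1$ in $\Theta^\vee$, so it still requires $\ell\ge 2m$. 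Keeping this straight, the verification is routine and the conclusion follows.
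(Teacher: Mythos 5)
Your proof is correct and takes essentially the same route as the paper: the paper also deduces the corollary from Proposition \ref{2-fold} applied with $\lambda=d_jm\omega_j$, after noting that $d_jm\omega_j(h_\Theta)\le\ell$ in every listed case (a type-by-type check the paper dismisses as ``easy to see'' and which you have simply written out, correctly). One small remark: since $d_j\omega_j(h_\Theta)=d_j a_j^\vee=a_j$, the coefficient of $\alpha_j$ in $\Theta$, your caution that one ``cannot simply use the marks of $\Theta$'' is unnecessary --- the marks give exactly the required numbers --- though your comark-times-$d_j$ computation reaches the same values, so nothing is affected.
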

 \begin{proof}
  It is easy to see that $d_jm\omega_j(h_\Theta)\le \ell$ in all considered cases. Thus the corollary follows from the above Proposition \ref{2-fold} by setting $\lambda=d_jm\omega_j$.
 \end{proof}
\vskip 6pt
\noindent
\subsection{} Proposition \ref{2-fold} and its corollary motivates us to consider the modules $\bold{V}(\lambda, \mu)$, for $\lambda, \mu\in P^+$ that defined as follows: it is the graded quotient of $W_\emph{loc}(\lambda+\mu)$ generated by the elements
 $$\{ (x^-_{\alpha}\otimes t^{2})w_{\lambda+\mu}: \alpha\in R^+\}\ \bigcup\  \{ (x^-_{\alpha}\otimes t)^{\emph{min}\{\lambda(h_\alpha),\mu(h_\alpha)\}+1}w_{\lambda+\mu}: \alpha\in R^+\}\ .$$
 We remark that these modules studied by Fourier in \cite{Fu} for the case $\lie g=\lie sl_{n+1}$. These modules are naturally related to the conjecture on the generalization of Schur positivity of \cite{chfsa}. In fact it is easy to see that this conjecture is true for these modules. The next lemma tells us its connection with fusion products.
 \begin{lem}\label{2-foldlem}
 Let $\lambda, \mu\in P^+$ and $z_1, z_2\in \mathbb{C}$ be two distinct complex numbers. Then there exists a surjective map of $\lie g[t]$--modules
 $$\bold{V}(\lambda, \mu)\rightarrow \ev_{z_1}V(\lambda)*\ev_{z_2}V(\mu)\rightarrow 0$$
 \end{lem}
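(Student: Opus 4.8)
The plan is to realise both sides as cyclic quotients of the same local Weyl module and then simply check that the defining relations of $\bold V(\lambda,\mu)$ are satisfied by the generating vector on the right. Recall that $\ev_0V(\lambda)$ and $\ev_0V(\mu)$ are graded quotients of $W_{\loc}(\lambda)$ and $W_{\loc}(\mu)$; hence by Lemma \ref{fusionweyl} the fusion product $\ev_{z_1}V(\lambda)\ast\ev_{z_2}V(\mu)$ is a graded quotient of $W_{\loc}(\lambda+\mu)$, the quotient map carrying $w_{\lambda+\mu}$ to the image $\bar v$ of $v_\lambda\otimes v_\mu$. Since $\bold V(\lambda,\mu)$ is by definition $W_{\loc}(\lambda+\mu)$ modulo the submodule generated by the elements $(x^-_\alpha\otimes t^2)w_{\lambda+\mu}$ and $(x^-_\alpha\otimes t)^{\min\{\lambda(h_\alpha),\mu(h_\alpha)\}+1}w_{\lambda+\mu}$, $\alpha\in R^+$, it suffices to check that
\begin{gather*}
(x^-_\alpha\otimes t^2)\,\bar v=0,\qquad (x^-_\alpha\otimes t)^{\min\{\lambda(h_\alpha),\mu(h_\alpha)\}+1}\,\bar v=0,\qquad \alpha\in R^+,
\end{gather*}
hold in $\ev_{z_1}V(\lambda)\ast\ev_{z_2}V(\mu)$; the map $w_{\lambda+\mu}\mapsto\bar v$ then descends to the desired surjection $\bold V(\lambda,\mu)\to\ev_{z_1}V(\lambda)\ast\ev_{z_2}V(\mu)$.

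For the first relation I would invoke Lemma \ref{elemfusion} with the two parameters $z_1,z_2$, which reduces the claim to $(x^-_\alpha\otimes(t-z_1)(t-z_2))(v_\lambda\otimes v_\mu)=0$; this is immediate from the definition of the evaluation action, since $(t-z_1)(t-z_2)$ vanishes at both $z_1$ and $z_2$, so each summand coming from the coproduct kills the corresponding tensor factor. For the second relation, fix $\alpha$ and set $k=\min\{\lambda(h_\alpha),\mu(h_\alpha)\}$; assuming say $k=\lambda(h_\alpha)$, I would apply Lemma \ref{elemfusion} a total of $k+1$ times, \emph{always choosing the parameter} $z_2$, reducing to $(x^-_\alpha\otimes(t-z_2))^{k+1}(v_\lambda\otimes v_\mu)=0$. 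Here $x^-_\alpha\otimes(t-z_2)$ acts as $0$ on $\ev_{z_2}V(\mu)$ and as the scalar $(z_1-z_2)$ times $x^-_\alpha$ on $\ev_{z_1}V(\lambda)$, so the vector in question equals $(z_1-z_2)^{k+1}\big((x^-_\alpha)^{k+1}v_\lambda\big)\otimes v_\mu$, which is $0$ because $(x^-_\alpha)^{\lambda(h_\alpha)+1}v_\lambda=0$ in $V(\lambda)$ and $k+1=\lambda(h_\alpha)+1$. The case $k=\mu(h_\alpha)$ is symmetric, with the parameter $z_1$ used throughout.

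The one place that needs a little care — and what I would regard as the crux — is this last step: the iterated use of Lemma \ref{elemfusion} must be arranged so that the parameter chosen at each stage is the one attached to the factor on which $x^-_\alpha\otimes(t-\,\cdot\,)$ acts by zero, which is precisely why the accumulated power $(x^-_\alpha)^{k+1}$ lands on the tensor factor carrying the smaller value of $(\,\cdot\,)(h_\alpha)$ and is annihilated there by the $\lie g$-integrability relation; everything else is the routine comparison of two presentations. (That this surjection is in fact an isomorphism for the special weights of Proposition \ref{2-fold} and its corollary is the substantive point, but that is obtained separately through the identification with a Demazure module, not here.)
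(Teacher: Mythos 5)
Your proposal is correct and takes essentially the same route as the paper: realise $\ev_{z_1}V(\lambda)\ast\ev_{z_2}V(\mu)$ as a cyclic quotient of $W_{\loc}(\lambda+\mu)$ and check the two defining families of relations of $\bold{V}(\lambda,\mu)$ on the generator, the key computation being $(x^-_{\alpha}\otimes (t-z_2))^{\lambda(h_\alpha)+1}(v_{\lambda}\otimes v_{\mu})=(z_1-z_2)^{\lambda(h_\alpha)+1}\bigl((x^-_{\alpha})^{\lambda(h_\alpha)+1}v_\lambda\otimes v_\mu\bigr)=0$ (and its symmetric counterpart). The passage from this identity in the tensor product to $(x^-_{\alpha}\otimes t)^{\min\{\lambda(h_\alpha),\mu(h_\alpha)\}+1}(v_{\lambda}\ast v_{\mu})=0$ in the associated graded module, which you justify by iterating Lemma~\ref{elemfusion} with the parameter attached to the annihilated factor, is precisely the step the paper leaves implicit, and your treatment of it is sound.
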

\begin{proof}
 The lemma easily follows from the definition of the evaluation representation and fusion products. For example, we prove that $(x^-_{\alpha}\otimes t)^{\emph{min}\{\lambda(h_\alpha),\mu(h_\alpha)\}+1}(v_{\lambda}*v_{\mu})=0$ holds in $\ev_{z_1}V(\lambda)*\ev_{z_2}V(\mu)$, where $v_{\lambda}*v_{\mu}$
is the image of $v_{\lambda}\otimes v_\mu$ in $\ev_{z_1}V(\lambda)*\ev_{z_2}V(\mu)$. Indeed, 
$$(x^-_{\alpha}\otimes (t-z_2))^{\lambda(h_\alpha)+1}(v_{\lambda}\otimes v_{\mu})=(z_1-z_2)^{\lambda(h_\alpha)+1}((x^-_{\alpha})^{\lambda(h_\alpha)+1}v_\lambda\otimes v_\mu)=0.$$
Hence $(x^-_{\alpha}\otimes t)^{\lambda(h_\alpha)+1}(v_{\lambda}* v_{\mu})=0$, and also similar argument shows that $(x^-_{\alpha}\otimes t)^{\mu(h_\alpha)+1}(v_{\lambda}* v_{\mu})=0$.
\end{proof}

\begin{prop}\label{schurpostivity}
 Let $i\in I$ be such that $d_i\omega_i(h_\Theta)\le 1$ and $\lambda\in \mathcal{P^+_\ell}$. Suppose that there exists $\mu_1, \mu_2\in P^+$ such that 
 \begin{enumerit}
 \item[(i)] $d_i\ell\omega_i+\lambda=\mu_1+\mu_2$ and 
 \item[(ii)]\label{schurcond} $\min\{\mu_1(h_\alpha),\mu_2(h_\alpha)\}\le \min\{d_i\ell\omega_i(h_\alpha),\lambda(h_\alpha)\}$, for all $\alpha\in R^+$,                                                                                             
\end{enumerit}
then there exists a surjective $\lie g[t]$--module map 
$V(d_i\ell\omega_i)*V(\lambda)\rightarrow V(\mu_1)* V(\mu_2)\rightarrow 0.$
\end{prop}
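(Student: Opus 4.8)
The plan is to exhibit both $V(d_i\ell\omega_i)*V(\lambda)$ and $V(\mu_1)*V(\mu_2)$ as successive quotients of the single local Weyl module $W_{\loc}(d_i\ell\omega_i+\lambda)$, which by hypothesis (i) coincides with $W_{\loc}(\mu_1+\mu_2)$, and then to compare their defining relations. By Proposition \ref{2-fold} the left--hand module is nothing but $\bold{V}(d_i\ell\omega_i,\lambda)$, the quotient of $W_{\loc}(d_i\ell\omega_i+\lambda)$ by the submodule generated by the elements $(x^-_\alpha\otimes t^2)w_{d_i\ell\omega_i+\lambda}$ and $(x^-_\alpha\otimes t)^{\min\{d_i\ell\omega_i(h_\alpha),\lambda(h_\alpha)\}+1}w_{d_i\ell\omega_i+\lambda}$, $\alpha\in R^+$. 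On the other side, Lemma \ref{2-foldlem} already provides a surjection $\bold{V}(\mu_1,\mu_2)\twoheadrightarrow \ev_{z_1}V(\mu_1)*\ev_{z_2}V(\mu_2)$. So it suffices to build a surjective graded $\lie g[t]$--map $\bold{V}(d_i\ell\omega_i,\lambda)\twoheadrightarrow\bold{V}(\mu_1,\mu_2)$ and compose the three maps.

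For the middle surjection I would simply verify that the cyclic generator $w_{\mu_1+\mu_2}$ of $\bold{V}(\mu_1,\mu_2)$ satisfies all the defining relations of $\bold{V}(d_i\ell\omega_i,\lambda)$; then sending generator to generator gives the desired map, automatically surjective and of degree $0$ since all relations are homogeneous. The relations $(x^-_\alpha\otimes t^2)w_{\mu_1+\mu_2}=0$ are imposed in $\bold{V}(\mu_1,\mu_2)$ by definition. For the remaining family, $\bold{V}(\mu_1,\mu_2)$ satisfies $(x^-_\alpha\otimes t)^{\min\{\mu_1(h_\alpha),\mu_2(h_\alpha)\}+1}w_{\mu_1+\mu_2}=0$; hitting this with the appropriate power of $x^-_\alpha\otimes t$ and invoking the inequality $\min\{\mu_1(h_\alpha),\mu_2(h_\alpha)\}\le\min\{d_i\ell\omega_i(h_\alpha),\lambda(h_\alpha)\}$ of hypothesis (ii) yields $(x^-_\alpha\otimes t)^{\min\{d_i\ell\omega_i(h_\alpha),\lambda(h_\alpha)\}+1}w_{\mu_1+\mu_2}=0$ in $\bold{V}(\mu_1,\mu_2)$ as well. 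This is the whole point, and it is immediate.

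Composing, we obtain $V(d_i\ell\omega_i)*V(\lambda)\cong\bold{V}(d_i\ell\omega_i,\lambda)\twoheadrightarrow\bold{V}(\mu_1,\mu_2)\twoheadrightarrow V(\mu_1)*V(\mu_2)$, which is the asserted surjection. I do not expect a real obstacle here: the substantive input is already contained in Proposition \ref{2-fold} and Lemma \ref{2-foldlem}, and the only subtlety is that hypothesis (ii) is stated precisely in the direction for which a higher power of $x^-_\alpha\otimes t$ vanishes, so the argument runs one way only --- the surjection goes from $V(d_i\ell\omega_i)*V(\lambda)$ onto $V(\mu_1)*V(\mu_2)$ and not conversely. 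I would also remark that the construction is insensitive to the fusion parameters, since $V(d_i\ell\omega_i)*V(\lambda)$ is parameter--independent by Proposition \ref{2-fold}, while Lemma \ref{2-foldlem} supplies the final surjection for every choice of distinct $z_1,z_2$.
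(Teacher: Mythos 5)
Your proposal is correct and follows essentially the same route as the paper: identify $V(d_i\ell\omega_i)*V(\lambda)$ with $\bold{V}(d_i\ell\omega_i,\lambda)$ via Proposition \ref{2-fold}, surject onto $\bold{V}(\mu_1,\mu_2)$ by comparing defining relations using hypothesis (ii), and finish with Lemma \ref{2-foldlem}. The only difference is that you spell out the relation-check for the middle surjection, which the paper leaves implicit.
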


\begin{proof}
 By Proposition \ref{2-fold}, we get a surjective map of $\lie g[t]$--modules $V(d_i\ell\omega_i)*V(\lambda)\rightarrow \bold{V}_{\mu_1,\mu_2}\rightarrow 0,$
and now using Lemma \ref{2-foldlem} we have a surjective map of $\lie g[t]$--modules $\bold{V}_{\mu_1,\mu_2}\rightarrow V(\mu_1)* V(\mu_2)\rightarrow 0.$ Putting all these together we get a surjective 
$\lie g[t]$--module map 
$$V(d_i\ell\omega_i)*V(\lambda)\rightarrow V(\mu_1)* V(\mu_2)\rightarrow 0.$$
\end{proof}

 \begin{cor}
  Let $\ell, m$ be positive integers such that $m\le \ell \le 2m-1$. Then for any dominant weight $\lambda$ of $\lie{g}$ with $\lambda(h_\Theta)\le 2m-\ell$ and a fundamental weight $\omega_i$ listed in the remark \ref{minusculecoweight}, we have a surjective map of $\lie{g}[t]$--modules
  \begin{gather*}
   V(d_im\omega_i)\ast V(\lambda+(\ell-m)d_i\omega_i)\rightarrow V(d_i\ell\omega_i)\ast V(\lambda) \rightarrow 0.
  \end{gather*}
 \end{cor}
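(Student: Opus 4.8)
The plan is to deduce the corollary from Proposition~\ref{schurpostivity}, applied with $m$ in place of the level ``$\ell$'' appearing there and with $\lambda':=\lambda+(\ell-m)d_i\omega_i$ in place of ``$\lambda$''. Since $\ell\ge m$, the weight $\lambda'$ is dominant, and I would feed in the decomposition $\mu_1=d_i\ell\omega_i$, $\mu_2=\lambda$; it satisfies $\mu_1+\mu_2=d_i\ell\omega_i+\lambda=d_im\omega_i+\lambda'$, which is hypothesis~(i) of that proposition. Its conclusion is then exactly the surjection $V(d_im\omega_i)\ast V(\lambda')\to V(d_i\ell\omega_i)\ast V(\lambda)\to 0$ claimed in the corollary, so the whole proof reduces to checking the remaining two hypotheses of Proposition~\ref{schurpostivity}.

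First I would verify $\lambda'\in\mathcal{P^+_m}$. By Remark~\ref{minusculecoweight}, membership of $\omega_i$ in the displayed list forces $d_i=1$ and $d_i\omega_i(h_\Theta)=1$ (the value $\omega_i(h_\Theta)$ is a positive integer, and the list condition bounds it by $1$). Hence
$$\lambda'(h_\Theta)=\lambda(h_\Theta)+(\ell-m)\,d_i\omega_i(h_\Theta)=\lambda(h_\Theta)+(\ell-m)\le (2m-\ell)+(\ell-m)=m,$$
so $\lambda'\in\mathcal{P^+_m}$, and the first two hypotheses of Proposition~\ref{schurpostivity} (with level $m$ and weight $\lambda'$) are in place.

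The substantive step is hypothesis~(ii), namely that for every $\alpha\in R^+$,
$$\min\{d_i\ell\omega_i(h_\alpha),\,\lambda(h_\alpha)\}\ \le\ \min\{d_im\omega_i(h_\alpha),\,\lambda(h_\alpha)+(\ell-m)d_i\omega_i(h_\alpha)\}.$$
I would exploit two facts: by Lemma~\ref{dalpha} together with the list condition $d_i\omega_i(h_\alpha)\le d_\alpha$, the nonnegative integer $d_i\omega_i(h_\alpha)$ lies in $\{0,d_\alpha\}$; and since $\lambda\in P^+$ one has $\lambda(h_\alpha)\le d_\alpha\lambda(h_\Theta)\le d_\alpha(2m-\ell)$ (the first inequality is the one used after Theorem~\ref{genreldem1}). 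If $d_i\omega_i(h_\alpha)=0$ both sides vanish. If $d_i\omega_i(h_\alpha)=d_\alpha$, set $b=\lambda(h_\alpha)$; then $b\le d_\alpha(2m-\ell)\le d_\alpha\ell$ (using $m\le\ell$) makes the left side equal to $b$, while $b+(\ell-m)d_\alpha\le d_\alpha(2m-\ell)+(\ell-m)d_\alpha=md_\alpha$ makes the right side equal to $b+(\ell-m)d_\alpha$, and $b\le b+(\ell-m)d_\alpha$ since $\ell\ge m$. This establishes (ii), so Proposition~\ref{schurpostivity} applies and gives the corollary.

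I expect the verification of (ii) to be the only real point: it is there that all three hypotheses $m\le\ell$, $\ell\le 2m-1$, and $\lambda(h_\Theta)\le 2m-\ell$ are used, together with the quantization $d_i\omega_i(h_\alpha)\in\{0,d_\alpha\}$ valid only for the $\omega_i$ on the list of Remark~\ref{minusculecoweight}; dropping any of these would break the short case analysis above. Everything else is routine bookkeeping with Proposition~\ref{schurpostivity} and Remark~\ref{minusculecoweight}.
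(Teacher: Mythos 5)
Your proof is correct and is exactly the argument the paper intends (the corollary is stated as a direct application of Proposition \ref{schurpostivity}, whose proof the paper leaves implicit): apply that proposition at level $m$ with weight $\lambda'=\lambda+(\ell-m)d_i\omega_i$ and decomposition $\mu_1=d_i\ell\omega_i$, $\mu_2=\lambda$, checking $\lambda'(h_\Theta)\le m$ and the min-inequality via $d_i\omega_i(h_\alpha)\in\{0,d_\alpha\}$ and $\lambda(h_\alpha)\le d_\alpha(2m-\ell)$. Your verification of hypothesis (ii) is the only nontrivial step and it is carried out correctly.
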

\vskip 6pt

\subsection{} Here is an another important application of Proposition \ref{demrelfuskr}.
\begin{thm}\label{genschurpos}
 Fix $k\in \mathbb{Z}_+$ and $\ell, m\in \mathbb{N}$ such that $\ell\ge m$. Suppose that there exists $\lambda\in P^+$ and $\mu\in \mathcal{P}^+_m$ such that $kd_i\ell\omega_i+\lambda=kd_im\omega_i+\mu$.
\begin{itemize}
 \item[(i)]  Then there exists a surjective map of $\lie{g}[t]-$modules
  \begin{gather*}
   KR(d_im\omega_i)^{\ast k}\ast V(\mu)\rightarrow KR(d_i\ell\omega_i)^{\ast k}\ast V(\lambda) \rightarrow 0.
  \end{gather*}

\item[(ii)] Further more if we assume that $i\in I$ such that $d_i\omega_i(h_\Theta)\le 1$. Then there exists a surjective map of $\lie{g}[t]-$modules
\begin{gather*}
   V(d_im\omega_i)^{\ast k}\ast V(\mu)\rightarrow V(d_i\ell\omega_i)^{\ast k}\ast V(\lambda) \rightarrow 0.
  \end{gather*}
\end{itemize}

\end{thm}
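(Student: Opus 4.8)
The plan is to realise both fusion products occurring in part (i) as cyclic quotients of \emph{one and the same} local Weyl module, and then to check that the defining relations of the left-hand module are killed by the cyclic generator of the right-hand one. Write $\nu=kd_i\ell\omega_i+\lambda=kd_im\omega_i+\mu$. Since $\mu\in\mathcal{P}^+_m$, Proposition \ref{demrelfuskr} presents $A:=KR(d_im\omega_i)^{\ast k}\ast V(\mu)$ as the quotient of $W_{\loc}(\nu)$ by the submodule generated by
\[
\{(x^-_\alpha\otimes t^{ks_\alpha+1})w_\nu:\alpha\in R^+\}\ \cup\ \{(x^-_\alpha\otimes t^{ks_\alpha})^{\mu(h_\alpha)+1}w_\nu:\alpha\in R^+\},\qquad s_\alpha=\frac{1}{d_\alpha}d_i\omega_i(h_\alpha).
\]
On the other hand $KR(d_i\ell\omega_i)$ is a quotient of $W_{\loc}(d_i\ell\omega_i)$ by definition and $V(\lambda)=\ev_0V(\lambda)$ is a quotient of $W_{\loc}(\lambda)$, so by Lemma \ref{fusionweyl} the module $B:=KR(d_i\ell\omega_i)^{\ast k}\ast V(\lambda)$ is also a cyclic quotient of $W_{\loc}(\nu)$, generated by $v:=v_1\ast\cdots\ast v_k\ast v_\lambda$, where $v_j$ generates the $j$-th copy of $KR(d_i\ell\omega_i)$ and $v_\lambda$ generates $\ev_0V(\lambda)$. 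Hence it is enough to show that the two displayed families annihilate $v$ in $B$; the surjection $A\twoheadrightarrow B$ of part (i) then follows from the universal property of $W_{\loc}(\nu)$. One cannot simply apply Proposition \ref{demrelfuskr} to $B$, since $\lambda$ need not lie in $\mathcal{P}^+_\ell$ and $B$ need not be a Demazure module — this is exactly what forces the hands-on verification below.

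To verify the relations I would repeat the fusion-product commutator computation from the proof of Theorem \ref{mapsdem}, with $z_1,\dots,z_k,z_{k+1}$ the parameters of the fusion product. From the presentation of $D(\ell,d_i\ell\omega_i)\cong KR(d_i\ell\omega_i)$ (Theorem \ref{genreldem1} together with Proposition \ref{dem=kr}) one has $(x^-_\alpha\otimes t^p)v_j=0$ for all $p\ge s_\alpha$ and $1\le j\le k$, while $\lie g[t]_+v_\lambda=0$ in $\ev_0V(\lambda)$. For the first family, apply $x^-_\alpha\otimes f_\alpha(t)$ with $f_\alpha(t)=\prod_{j=1}^k(t-z_j)^{s_\alpha}\cdot(t-z_{k+1})$ to $v_1\otimes\cdots\otimes v_k\otimes v_\lambda$: expanding through the coproduct, the polynomial landing on the $j$-th tensorand ($j\le k$) is divisible by $t^{s_\alpha}$ after recentring at $z_j$, and the one landing on $v_\lambda$ is divisible by $t$, so the whole expression vanishes; since $f_\alpha(t)=t^{ks_\alpha+1}+(\text{lower order})$, the element $(x^-_\alpha\otimes t^{ks_\alpha+1})(v_1\otimes\cdots\otimes v_\lambda)$ then lies in the filtration piece $F^{ks_\alpha}$ of the tensor product, so it maps to $0$ in the fusion product $B$. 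For the second family, use $g_\alpha(t)=\prod_{j=1}^k(t-z_j)^{s_\alpha}$: since $x^-_\alpha\otimes g_\alpha(t)$ acts as $0$ on each of the first $k$ tensorands, the only surviving term of $(x^-_\alpha\otimes g_\alpha(t))^{\mu(h_\alpha)+1}(v_1\otimes\cdots\otimes v_\lambda)$ is $v_1\otimes\cdots\otimes v_k\otimes c^{\mu(h_\alpha)+1}(x^-_\alpha)^{\mu(h_\alpha)+1}v_\lambda$ with $c=\prod_{j=1}^k(z_{k+1}-z_j)^{s_\alpha}\neq0$; here $(x^-_\alpha)^{\mu(h_\alpha)+1}v_\lambda=0$ in $V(\lambda)$ because $kd_i\ell\omega_i+\lambda=kd_im\omega_i+\mu$ and $\ell\ge m$ give $\lambda(h_\alpha)=\mu(h_\alpha)-k(\ell-m)d_\alpha s_\alpha\le\mu(h_\alpha)$. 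Comparing leading terms once more ($g_\alpha(t)^{\mu(h_\alpha)+1}=t^{ks_\alpha(\mu(h_\alpha)+1)}+(\text{lower order})$) yields $(x^-_\alpha\otimes t^{ks_\alpha})^{\mu(h_\alpha)+1}v=0$ in $B$, which finishes part (i).

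Part (ii) is then immediate. If $d_i\omega_i(h_\Theta)\le1$, equivalently $d_i\omega_i(h_\alpha)\le d_\alpha$ for all $\alpha\in R^+$ (Remark \ref{minusculecoweight}), then Proposition \ref{dem=kr} supplies graded isomorphisms of $\lie g[t]$--modules $KR(d_i\ell\omega_i)\cong\ev_0V(d_i\ell\omega_i)$ and $KR(d_im\omega_i)\cong\ev_0V(d_im\omega_i)$; substituting these into the fusion products of part (i) turns the surjection into $V(d_im\omega_i)^{\ast k}\ast V(\mu)\twoheadrightarrow V(d_i\ell\omega_i)^{\ast k}\ast V(\lambda)$.

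The step I expect to require the most care is the second family of relations. Unlike in Theorem \ref{mapsdem}, where both modules are Demazure modules and the presentation of Theorem \ref{genreldem1} can be used on each side, here the target $B$ is only known to be a quotient of a local Weyl module (because $\lambda\notin\mathcal{P}^+_\ell$ in general), so the vanishing of $(x^-_\alpha\otimes t^{ks_\alpha})^{\mu(h_\alpha)+1}v$ genuinely has to be established inside the fusion product; it rests on the inequality $\lambda(h_\alpha)\le\mu(h_\alpha)$, which is precisely the Schur-positivity-type hypothesis contained in $\ell\ge m$. The remaining bookkeeping — passing from a vanishing polynomial relation in the tensor product to the corresponding monomial relation in the associated graded module via the filtration $\{F^rV\}$ — is routine and identical to the analogous step in the proof of Theorem \ref{mapsdem}.
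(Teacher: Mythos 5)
Your proof is correct, but it takes a longer route than the paper, and the premise you use to justify the detour is false. You assert that ``one cannot simply apply Proposition~\ref{demrelfuskr} to $B$, since $\lambda$ need not lie in $\mathcal{P}^+_\ell$''; in fact the hypotheses force $\lambda\in\mathcal{P}^+_\ell$, and this one observation is the paper's entire proof: evaluating $kd_i\ell\omega_i+\lambda=kd_im\omega_i+\mu$ at $h_\Theta$ and using $\mu(h_\Theta)\le m$ and $\ell\ge m$ gives $\lambda(h_\Theta)=\mu(h_\Theta)-k(\ell-m)d_i\omega_i(h_\Theta)\le\mu(h_\Theta)\le m\le\ell$. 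Hence Proposition~\ref{demrelfuskr} presents \emph{both} $A=KR(d_im\omega_i)^{\ast k}\ast V(\mu)$ and $B=KR(d_i\ell\omega_i)^{\ast k}\ast V(\lambda)$ as quotients of $W_{\loc}(\nu)$, with the same first family of relations and with exponents $\mu(h_\alpha)+1$ resp.\ $\lambda(h_\alpha)+1$ in the second; since $\lambda(h_\alpha)\le\mu(h_\alpha)$ for all $\alpha\in R^+$, the defining relations of $A$ hold in $B$ and the surjection is immediate, with part (ii) following from Proposition~\ref{dem=kr} exactly as you say. Your hands-on verification of the relations on the cyclic generator of $B$ — the vanishing of $x^-_\alpha\otimes\prod_j(t-z_j)^{s_\alpha}(t-z_{k+1})$ using $(x^-_\alpha\otimes t^p)v_j=0$ for $p\ge s_\alpha$, the surviving term $c^{\mu(h_\alpha)+1}(x^-_\alpha)^{\mu(h_\alpha)+1}v_\lambda$ together with $\lambda(h_\alpha)\le\mu(h_\alpha)$, and the passage to the associated graded via the filtration — is sound; it amounts to re-running the computation in the proof of Theorem~\ref{mapsdem} (i.e.\ re-deriving the presentation of $B$ directly) rather than quoting Proposition~\ref{demrelfuskr} for the target. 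What your route buys is independence from knowing that $B$ is a Demazure module; what it costs is length and the incorrect side claim, which should be removed or replaced by the $h_\Theta$ computation above.
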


\begin{proof}
 Observe that $kd_i\ell\omega_i(h_\Theta)+\lambda(h_\Theta)=kd_im\omega_i(h_\Theta)+\mu(h_\Theta)$ and $\mu(h_\Theta)\le m$ implies $\lambda\in \mathcal{P^+_\ell}$.
 Now the theorem is immediate from the Proposition \ref{demrelfuskr}.
\end{proof}

\begin{rem}
We end this section with an important remark. We first recall the results of \cite{chfsa}.
In \cite{chfsa}, the authors defined a partial order $\preceq$ on the set of $k$--tuples of dominant integral weights which add up to $\lambda$ (say $P^+(\lambda,k)$): one just requires the inequality $(ii)$ in the Proposition \ref{schurpostivity} to hold for all partial sums.
Given an element of $P^{+}(\lambda,\,k)$, they 
considered the tensor product of the corresponding
simple finite--dimensional $\lie g$--modules and showed that the dimension of this tensor product increases along this partial order $\preceq$.
They also showed that in the case when
$\lambda$ is a multiple of a fundamental minuscule weight
($\lie g$ and $k$ are general) or
if $\lie g$ is of type $A_{2}$ and $k=2$ ($\lambda$ is general),
there exists an inclusion of tensor products along with the partial order $\preceq$ on $P^{+}(\lambda,\,k)$ (see \cite[Theorem 1]{chfsa}). In particular,  if $\lie g$
is of type $A_n$,  this means that the difference of
the characters is Schur positive.

We can recover these results when $\lambda=Nd_i\omega_i$ using Proposition \ref{schurpostivity}, where $\omega_i$ is in the list of remark \ref{minusculecoweight}
and $\lie g$, $k$ are general. We indeed prove that there exists a surjective map between the appropriate tensor products. The existence of such a surjective map between the tensor products can be reduced to the case when $k=2$ and this case easily follows from proposition \ref{schurpostivity}. We refer the readers to \cite{chfsa} for more details.  We note here that the proof in \cite{chfsa} uses the combinatorics of LS paths and our approach avoid this. 
\end{rem} 
%%%%%%%%%%%%%%%%%%%%%%%%%%%%%%%%%%%%%%%%%%%%%%%%%%%%%%%%%%%%%%%%%%%%%%%%%%%%%%%%%%%%%%%%%%%%%%%%%%%%%%%%%%%%%%%%%%%%%%%%%%%%%%%%%%%%%%%%%%%%%%%%%%%%%%%%%%%%%%%%%%%%%%%%%%%%%%%%%%%%%%%%%%%%%%%%%

\end{document}